\newif\iffinalrun
\theoremstyle{plain}
  \newcommand{\need}[1]{}
  \newcommand{\mar}[1]{}
  \newcommand{\need}[1]{{\tiny *** #1}}
  \newcommand{\mar}[1]{\marginpar{\raggedright\tiny #1}}
\newtheorem{defi}{Definition}[section]
\newtheorem{lem}[defi]{Lemma}
\newtheorem{theo}[defi]{Theorem}
\newtheorem{conj}[defi]{Conjecture}
\newtheorem{cor}[defi]{Corollary}
\newtheorem{rmk}[defi]{Remark}
\newtheorem{prop}[defi]{Proposition}
\newcommand{\defeq}{\stackrel{\textrm{\tiny{def}}}{=}}
\newcommand{\maq}[4]{\left(\begin{array}{cc} #1&#2\\#3&#4\end{array}\right)}
\newcommand{\maqn}[2]{\left(\begin{array}{#1} #2\end{array}\right)}
\numberwithin{equation}{defi}
\newcommand{\fp}{\mathbf{F}_p}
\newcommand{\N}{\mathbf{N}}
\newcommand{\Q}{\mathbf{Q}}
\newcommand{\qp}{\mathbf{Q}_p}
\newcommand{\Qp}{\mathbf{Q}_p}
\newcommand{\qpbar}{\overline{\qp}}
\newcommand{\Zp}{\mathbf{Z}_p}
\newcommand{\F}{\mathbf{F}}
\newcommand{\Oe}{\mathcal{O}_E}
\newcommand{\Z}{\mathbf{Z}}
\newcommand{\gl}[2]{\mathbf{GL}_{#1}(#2)}
\newcommand{\bT}{\mathbf{T}}
\newcommand{\teich}[1]{\widetilde{#1}}
\newcommand{\galorep}{\overline{\rho}}
\newcommand{\vectln}[2]{\left(\begin{array}{#1} #2\end{array}\right)}
\newcommand{\SBr}{S_{\mathcal{O}_E}}
\newcommand{\OEModdd}[1][r]{\text{$\Oe$-$\mathrm{Mod}_{\mathrm {dd}}^{#1}$}}
\newcommand{\OEModddN}[1][r]{\text{$\Oe$-$\mathrm{Mod}_{\mathrm {dd},0}^{#1}$}}
\newcommand{\RModdd}[1][r]{\text{$R$-$\mathrm{Mod}_{\mathrm {dd}}^{#1}$}}
\newcommand{\RModddN}[1][r]{\text{$R$-$\mathrm{Mod}_{\mathrm {dd},0}^{#1}$}}
\newcommand{\FBrModdd}[1][r]{\text{$\F$-$\operatorname{BrMod}_{\mathrm {dd}}^{#1}$}}
\newcommand{\FBrModddN}[1][r]{\text{$\F$-$\operatorname{BrMod}_{\mathrm {dd},0}^{#1}$}}
\newcommand{\RBrModdd}[1][r]{\text{$\Rbar$-$\operatorname{BrMod}_{\mathrm {dd}}^{#1}$}}
\newcommand{\RBrModddN}[1][r]{\text{$\Rbar$-$\operatorname{BrMod}_{\mathrm {dd,0}}^{#1}$}}
\newcommand{\rhobar}{\overline{\rho}}
\newcommand{\barS}{\overline{S}}
\newcommand{\cO}{\mathcal{O}}
\newcommand{\ra}{\rightarrow}
\newcommand{\Mat}{\mathrm{Mat}}
\newcommand{\phz}{\varphi}
\newcommand{\barcM}{\overline{\mathcal{M}}}
\newcommand{\barc}[1]{\overline{\mathcal{#1}}}
\newcommand{\Tst}{\mathrm{T}_{\mathrm{st}}}
\newcommand{\Tqst}{\mathrm{T}_{\mathrm{qst}}}
\newcommand{\Tcris}{\mathrm{T}_{\mathrm{cris}}}
\newcommand{\Fil}{\mathrm{Fil}}
\newcommand{\Gal}{\mathrm{Gal}}
\newcommand{\cM}{\mathcal{M}}
\newcommand{\cI}{\mathcal{I}}
\newcommand{\cJ}{\mathcal{J}}
\newcommand{\rM}{\mathrm{\mathbf{M}}}
\newcommand{\barcN}{\overline{\mathcal{N}}}
\newcommand{\rGL}{\mathrm{\mathbf{GL}}}
\newcommand{\GL}{\mathrm{\mathbf{GL}}}
\newcommand{\rU}{\mathrm{\mathbf{U}}}
\newcommand{\rB}{\mathrm{\mathbf{B}}}
\newcommand{\rT}{\mathrm{\mathbf{T}}}
\newcommand{\rL}{\mathrm{\mathbf{L}}}
\newcommand{\sR}{\mathscr{R}}
\newcommand{\opp}{\mathrm{opp}}
\newcommand{\Rbar}{\overline{R}}
\newcommand{\Fp}{\fp}
\newcommand{\onto}{\twoheadrightarrow}
\newcommand{\Frob}{\mathrm{Frob}}
\newcommand{\Spf}{\mathrm{Spf}}
\newcommand{\into}{\hookrightarrow}
\newcommand{\un}[1]{\underline{#1}}
\DeclareMathOperator{\Hom}{Hom}
\newcommand{\WD}{{\operatorname{WD}}}
\newcommand{\ilim}{\varinjlim} 
\newcommand{\Dst}{\mathrm{D}_{\mathrm{st}}}
\newcommand{\Bst}{\mathrm{B}_{\mathrm{st}}}
\author{Brandon Levin}
\address{The University of Chicago, 5734 S. University Avenue, Chicago, Illinois 60637, USA}
\email{bwlevin@math.uchicago.edu}
\author{Stefano Morra}
\address{Institut Montpelli\'erain A. Grothendieck,
Universit\'e de Montpellier,
Cc 051, Place E. Bataillon,
34095 Montpellier Cedex, France}
\email{stefano.morra@umontpellier.fr}
\keywords{}
\title{Potentially crystalline deformation rings in the ordinary case}
\begin{document}
\begin{abstract}
We study potentially crystalline deformation rings for a residual, ordinary Galois representation $\rhobar: G_{\Qp}\rightarrow \GL_3(\Fp)$. We consider deformations with Hodge-Tate weights $(0,1,2)$ and inertial type chosen to contain exactly one Fontaine-Laffaille modular weight for $\rhobar$. We show that, in this setting, the potentially crystalline deformation space is formally smooth over $\Zp$ and any potentially crystalline lift is ordinary. The proof requires an understanding of the condition imposed by the monodromy operator on Breuil modules with descent datum, in particular, that this locus mod $p$ is formally smooth.
\end{abstract}
\maketitle
\tableofcontents

\section{Introduction}

One of the aims of the $p$-adic Local Langlands correspondence is a description of $p$-adic automorphic forms in terms of Galois parameters. From this perspective, Serre weight type conjectures (cf. \cite{serre-duke}, \cite{BDJ}, \cite{florian-duke}, \cite{gee-annalen}) are the first milestone to investigate a $p$-modular correspondence (\cite{breuilI},\cite{BP}, \cite{BH}) and predict the structure of certain local deformation rings which are relevant for the refined modularity lifting techniques (\cite{kisin-annals}, \cite{gee-kisin}).

The Breuil-M\'ezard conjecture (\cite{BM}, \cite{BM11}, \cite{EG}) is intimately related to the above phenomena and gives an explicit relation between the irreducible components of the special fiber of local deformation rings and the representation theory of $\GL_n(\F_q)$.  In the case of $\GL_2$, the conjecture is known in the potentially Barsotti-Tate case by  \cite{gee-kisin} using modularity lifting techniques.  In general, the conjecture is closely related to deep modularity results (\cite{EG}, Theorem 5.5.2).  In this paper, we confirm an instance of the Breuil-M\'ezard conjecture for potentially crystalline deformation rings for $\GL_3$ with Hodge-Tate weights $(0,1,2)$.  

Let us be more precise. If $\rhobar: G_{\Qp}\rightarrow\GL_n(\F)$ is a continuous Galois representation, where $\F/\Fp$ is a finite extension (the ``field of coefficients'') one can consider the Galois deformation ring $R^{\Box,\tau,\lambda}_{\rhobar}$ parameterizing potentially semistable lifts of $\rhobar$ having constraints from $p$-adic Hodge theory  -a $p$-adic Hodge type $\lambda\in \Z^{n}$ and an inertial type $\tau:I_{\Qp}\rightarrow \GL_n(E)$, where $I_{\Qp}$ is the inertia subgroup of $G_{\Qp}$ and $E/\Qp$ is a finite extension of $\Qp$, with residue field $\F$.

If $\lambda =(\lambda_1, \ldots, \lambda_n)$ with $\lambda_i < \lambda_{i+1}$ (i.e., regular Hodge-Tate weights), then one can naturally associate a semisimple $\GL_n(\Fp)$-representation $F(\lambda,\tau)$ with $\F$ coefficients to the constraints $(\lambda,\tau)$; in particular, if $\sigma$ denotes a \emph{weight}, i.e. an isomorphism class of irreducible $\GL_n(\Fp)$-representation, we can consider the multiplicity $m_{\sigma}(\lambda,\tau)\in\N$ of $\sigma$ appearing in $F(\lambda,\tau)$. 

\begin{conj}[Breuil-M\'ezard conjecture]
There exists integers $\mu_{\sigma}(\rhobar)\in \N$ such that, for any regular Hodge type $\lambda\in \Z^n$ and any inertial type $\tau$, one has
$$
\mathrm{HS}\left(R^{\Box,\tau,\lambda}_{\rhobar}\otimes\F\right)=\sum_{\sigma} m_{\sigma}(\lambda,\tau)\mu_{\sigma}(\rhobar)
$$
where $\mathrm{HS}\left(R^{\Box,\tau,\lambda}_{\rhobar}\otimes\F\right)$ denotes the Hilbert-Samuel multiplicity of the special fiber of the deformation space $R^{\Box,\tau,\lambda}_{\rhobar}$.
\end{conj}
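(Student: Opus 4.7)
The plan is to follow the Emerton--Gee strategy, recasting the Breuil--M\'ezard identity of Hilbert--Samuel multiplicities as a geometric statement about cycles on the special fiber and extracting the multiplicities $\mu_\sigma(\rhobar)$ from a Taylor--Wiles--Kisin patched module attached to an automorphic globalization of $\rhobar$. The key structural observation is that the right-hand side of the conjectured formula is linear in the input $F(\lambda,\tau)$, so both sides should be expressible as additive functions on the Grothendieck group of mod $p$ smooth finite-length $\GL_n(\Zp)$-representations; the weights $\mu_\sigma(\rhobar)$ should then be intrinsic invariants of $\rhobar$ alone.

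I would proceed in four steps. First, upgrade the numerical conjecture to its geometric form: for each Serre weight $\sigma$ one seeks an effective cycle $\mathcal{C}_\sigma$ supported on $\mathrm{Spec}\,R^{\Box,\mathrm{cris},\lambda_\sigma}_{\rhobar}\otimes\F$, where $\lambda_\sigma$ is a small Hodge--Tate type associated to $\sigma$, so that the equality of cycles $[\mathrm{Spec}\,R^{\Box,\tau,\lambda}_{\rhobar}\otimes\F] = \sum_\sigma m_\sigma(\lambda,\tau)\,\mathcal{C}_\sigma$ holds. The Hilbert--Samuel multiplicity is then additive in cycles, so $\mu_\sigma(\rhobar) := \mathrm{HS}(\mathcal{C}_\sigma)$ gives the desired invariants. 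Second, globalize $\rhobar$ to a modular Galois representation $r: G_F \to \GL_n(\qpbar)$ attached to an automorphic form on a definite unitary group with $r|_{G_{F_{\widetilde{v}}}}\cong \rhobar$, via potential automorphy of BLGGT type. Third, apply Taylor--Wiles--Kisin patching to produce an $R_\infty$-module $M_\infty$, where $R_\infty$ is a power series ring over the unrestricted local deformation ring $R^{\Box}_{\rhobar}$ and $M_\infty$ is projective over $\cO[[\GL_n(\Zp)]]$; the quotient enforcing the potentially semistable conditions recovers a patched version of $R^{\Box,\tau,\lambda}_{\rhobar}$, and one extracts $\mu_\sigma(\rhobar)$ as the Hilbert--Samuel multiplicity of the $\sigma$-isotypic part of $M_\infty$. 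Fourth, verify the cycle identity by decomposing $F(\lambda,\tau)$ in the Grothendieck group and exploiting the inertial local Langlands matching of $F(\lambda,\tau)$ with the locally algebraic types parametrizing potentially semistable lifts of type $(\lambda,\tau)$.

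The main obstacle is twofold. First, the patching setup presupposes the existence of crystalline lifts of $\rhobar$ realizing all predicted Serre weights in small Hodge--Tate range --- essentially the weight part of Serre's conjecture for $\rhobar$ --- which is currently open in full generality for $n\geq 4$ and a serious input already for $n=3$. Second, and more centrally, proving the \emph{compatibility} of cycles across \emph{all} pairs $(\lambda,\tau)$ demands precise structural information about potentially semistable deformation rings: Cohen--Macaulayness of their special fibers, description of irreducible components via Breuil modules with descent data, and the matching of these components to Serre weights. This local input is precisely the kind of analysis the present paper carries out in the restricted ordinary setting with a single Fontaine--Laffaille modular weight, and a full proof of the general Breuil--M\'ezard conjecture appears to require serious case-by-case work well beyond currently available techniques --- which is why the literature to date has tackled the conjecture only in carefully delimited regimes.
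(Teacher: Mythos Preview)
The statement you were asked to prove is labelled a \emph{Conjecture} in the paper; the paper does not prove it and does not claim to. What the paper actually does is verify a single instance of the conjecture for $\GL_3$: for an ordinary $\rhobar$ satisfying a strong genericity hypothesis and a specific principal series type $\tau$ chosen so that $F((0,1,2),\tau)$ contains exactly one predicted Serre weight for $\rhobar$, the authors compute $R^{\Box,(0,1,2),\mathrm{cris},\tau}_{\rhobar}$ directly and show it is formally smooth, which is consistent with the Breuil--M\'ezard prediction $\mathrm{HS}=1$ in that case.

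Your proposal is not a proof but a summary of the Emerton--Gee patching strategy, and you yourself concede in the final paragraph that it does not close. That is the honest state of affairs: the general conjecture is open, the inputs you list (existence of crystalline lifts for all predicted weights, structural control of all potentially semistable deformation rings) are not available, and no amount of outlining the four-step plan produces the missing local analysis. So there is no ``gap'' to name beyond the one you already identify --- the entire argument is conditional on unproved inputs. If the intent was to compare with the paper, the relevant remark is simply that the paper establishes one data point compatible with the conjecture by a direct computation with Breuil modules and monodromy, rather than by patching; it neither attempts nor claims the general statement.
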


When the Hodge type is given by $\lambda=(0,1,\dots,n-1)$ then $F(\lambda,\tau)$ is the mod-$p$ reduction of an automorphic type, and the Serre weight conjectures can be considered as a prediction for the intrinsic multiplicities $\mu_{\sigma}(\rhobar)$.   As soon as $n\geq 3$, several complications arise in questions related to Breui-M\'ezard and modularity lifting.  On the representation theory side, the mod $p$ irreducible representations of $\GL_n(\Fp)$ no longer arise as reductions of lattices in algebraic representations over $\Zp$.  This leads to the phenomenon of \emph{shadow} weights (see \cite{florian-duke}, \cite[\S 6]{EGH}).   On the Galois side, the integral $p$-adic Hodge theory used to study potentially crystalline deformation rings of type $\lambda$ becomes more subtle.     

More precisely, let $K/\Qp$ finite extension (which we assume to be totally ramified for simplicity) and let $\pi$ be a uniformizer of $K$.  Choose a compatible system of $p$-power roots $\pi^{1/p^n}$ and define $K_{\infty} = \cup_n K(\pi^{1/p^n}) \subset \overline{K}$.  Let $G_{K}$ denote the absolute Galois group of $K$ and let $G_{K_{\infty}} := \Gal(\overline{K}/K_{\infty})$.   In general, the restriction functor 
$$
\mathrm{Rep}_{G_K}^{\mathrm{cris}} (\overline{\Qp}) \ra \mathrm{Rep}_{G_{K_{\infty}}} (\overline{\Qp}) 
$$
is fully faithful and its image is contained in the  \emph{finite height} $G_{K_{\infty}}$-representations (\cite[Corollary 2.1.14]{kisin-crystal}).  Both these categories are described by linear algebra data using $p$-adic Hodge theory.  From that perspective, the essential image of the functor is characterized by a Griffiths transversality condition.   In the Barsotti-Tate case (i.e., height $\leq 1$), Griffiths transversality is always satisfied and this gives more precise control over Barsotti-Tate deformation rings.  The difficulty that arises in higher weight situations is to understand which representations of $G_{K_{\infty}}$ descend to (potentially) crystalline representations of $G_K$ (integrally as well).  In this paper, we address this question for tamely potentially crystalline deformation rings for $\Qp$ and $\GL_3$ with Hodge-Tate weights $(0,1,2)$ with some assumptions on $\rhobar$.  

In order to state the main theorem, we let $\rhobar: G_{\Qp}\rightarrow \GL_3(\F)$ be an ordinary three dimensional Galois representation of the form
\begin{equation}
\label{Galois representation introduction}
\rhobar\vert_{I_{\Qp}}\cong \begin{pmatrix}\omega^{a_2+2}&\ast&\ast\\
0&\omega^{a_{1}+1}&\ast\\
0&0&\omega^{a_0}
\end{pmatrix}
\end{equation}
where $\omega: I_{\Qp}\rightarrow \Fp$ denotes the mod $p$ cyclotomic character and $a_i\in\N$.
Recall that $\teich{\omega}:I_{\Qp}\rightarrow \Zp^{\times}$ denotes the Teichm\"uller lift of $\omega$.

\begin{theo}[Theorem \ref{main}]
\label{theo1}
Let $\rhobar: G_{\Qp}\rightarrow \GL_3(\F)$ be an ordinary Galois representation as in $($\ref{Galois representation introduction}$)$.
Assume that the integers $a_i\in\N$ verify
$a_1-a_0,\,a_2-a_1>3$ and $a_2-a_0<p-4$ and define the inertial type $\tau\defeq \teich{\omega}^{a_2}\oplus\teich{\omega}^{a_1}\oplus\teich{\omega}^{a_0}$.

Let $R^{\Box, (0,1,2), \mathrm{cris}, \tau}_{\rhobar}$ be the framed potentially crystalline deformation ring for $\rhobar$, with Galois type $\tau$ and Hodge type $(0,1,2)$ and assume that $\Spf R^{\Box, (0,1,2), \mathrm{cris}, \tau}_{\rhobar}$ is non-empty.  Then  $R^{\Box, (0,1,2), \mathrm{cris}, \tau}_{\rhobar}$ is formally smooth of relative dimension 12.  
\end{theo}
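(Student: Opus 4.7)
The plan is to reduce the theorem to a moduli problem of Breuil modules with descent data and an auxiliary monodromy operator. By the integral $p$-adic Hodge theory of Breuil, Kisin, and Liu (in the tame descent form recalled earlier in the paper), potentially crystalline lifts of $\rhobar$ with Hodge-Tate weights $(0,1,2)$ and inertial type $\tau = \teich{\omega}^{a_2}\oplus\teich{\omega}^{a_1}\oplus\teich{\omega}^{a_0}$ correspond to height $\leq 2$ Breuil modules $\mathcal{M}$ with descent data from $K=\Qp(\pi)$ (where $\pi^{p-1}=-p$) down to $\Qp$, equipped with a monodromy operator $N$ satisfying $N\phi = p\phi N$ and the Griffiths transversality condition $N(\mathrm{Fil}^2 \mathcal{M}) \subset \mathrm{Fil}^1 \mathcal{M}$. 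I would therefore identify $R^{\Box,(0,1,2),\mathrm{cris},\tau}_{\rhobar}$ with the universal framed deformation ring of such a triple $(\mathcal{M}, N, \beta)$ over $\rhobar$.

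Next, I would exploit the genericity conditions $a_1 - a_0, a_2 - a_1 > 3$ and $a_2 - a_0 < p-4$ together with the ordinarity of $\rhobar$ to pin down the shape of $\mathcal{M}$. The descent data decomposes $\mathcal{M}$ into three isotypic components indexed by the distinct characters $\teich{\omega}^{a_i}$, producing a canonical ordered basis; the separation of the $a_i$ modulo $p-1$ rules out the shadow-weight coincidences that would otherwise allow alternative Jordan-Hölder constituents to appear. The ordinary structure of $\rhobar$ then forces both the Frobenius and filtration matrices of $\mathcal{M}$ to be upper triangular on this basis, with entries in an explicit narrow range. The resulting universal deformation of Breuil modules (ignoring monodromy) is a single affine chart that is formally smooth over $\Zp$.

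The principal obstacle is the monodromy analysis. On the above chart, $N$ is determined by a few unknown entries compatible with the descent data, and the relations $N\phi = p\phi N$ and $N(\mathrm{Fil}^2\mathcal{M}) \subset \mathrm{Fil}^1\mathcal{M}$ translate into an explicit system of polynomial equations among the entries of $\mathcal{M}$ and $N$. The heart of the proof is to compute these equations modulo $p$ and show that they cut out a formally smooth closed subscheme of the expected codimension, which is precisely the formal smoothness of the monodromy locus advertised in the abstract. The genericity conditions on the $a_i$ are exactly what is needed to keep the equations in general position, so that the monodromy ideal behaves like a regular sequence. Once formal smoothness mod $p$ is established, the non-emptiness hypothesis on $\Spf R^{\Box,(0,1,2),\mathrm{cris},\tau}_{\rhobar}$ combined with a standard lifting argument in deformation theory upgrades the result to formal smoothness over $\Zp$.

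A final dimension count confirms the expected value: the framing contributes $n^2 = 9$ and the monodromy-constrained Breuil-module parameters contribute the remaining $\binom{n}{2} = 3$, matching the standard expected relative dimension $n^2 + n(n-1)/2$ for a framed potentially crystalline deformation ring with regular Hodge-Tate weights for $\GL_n$ over $\Qp$. Thus $R^{\Box,(0,1,2),\mathrm{cris},\tau}_{\rhobar}$ is formally smooth over $\Zp$ of relative dimension $12$, and the upper-triangular shape of $\mathcal{M}$ at every point simultaneously shows that every potentially crystalline lift is itself ordinary.
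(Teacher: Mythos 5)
Your outline follows the same broad strategy as the paper (reduce to a moduli space of Breuil/strongly divisible modules with descent datum, show it is formally smooth, impose the monodromy condition, and lift via flatness), but it glosses over the two nontrivial ingredients that make the paper's argument actually work.

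First, you assert without proof that $R^{\Box,(0,1,2),\mathrm{cris},\tau}_{\rhobar}$ \emph{is} the universal framed deformation ring of triples $(\cM, N, \beta)$ lifting $\rhobar$. This identification is a genuine theorem (Theorem~\ref{isomorphism moduli spaces} in the paper, after the technique of \cite{BM11}), not a consequence of Liu's anti-equivalence over $\Oe$. The map $\Spf R^{\Box}_{\barcM}\rightarrow\Spf R^{\Box}_{\rhobar}$ is only a priori a morphism; the paper shows it is an isomorphism by proving (a) it is an isomorphism on generic fibers (using uniqueness of $\barcM$ and Liu's theorem) and (b) it is injective on reduced tangent spaces, the latter via a Fontaine--Laffaille comparison. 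Without this, formal smoothness of the Breuil-module side does not transfer to the deformation ring. Secondly, you state that ordinarity "forces both the Frobenius and filtration matrices to be upper triangular" and that the quasi-Breuil moduli is a single smooth affine chart. The actual normal form established in the paper (Theorem~\ref{phifilr}) has lower-triangular filtration matrix and \emph{diagonal} Frobenius, and arriving at it requires a delicate $p$-adic convergence argument (Propositions~\ref{even new}, \ref{odd new}). This is the technical heart of the paper and cannot be taken as automatic. Finally, your description of the monodromy locus as "equations in general position forming a regular sequence" is vaguer than what is needed: the paper shows (Proposition~\ref{essential image}) that the monodromy constraint mod $p$ is cut out by a single linear equation $v_{2,0}=0$, which is what makes the special fiber manifestly formally smooth. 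The dimension-count paragraph is consistent with \cite[Theorem 3.3.8]{PST} but is asserted, not derived.

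In short, the route is the same, but you would need to supply (i) the tangent-space argument identifying the two deformation problems and (ii) the $p$-adic diagonalization of Frobenius giving the gauge basis, before the proof is complete.
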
  

Theorem \ref{theo1} is obtained by explicitly constructing a formally smooth morphism $R^{\Box,\tau}_{\barcM}\rightarrow R^{\Box, (0,1,2), \mathrm{cris}, \tau}_{\rhobar}$, where $R^{\Box,\tau}_{\barcM}$ is a moduli space of strongly divisible modules $\cM$ lifting $\rhobar$ which we can control by means of integral $p$-adic Hodge theory.  There are two key ingredients.  First, for our choice of $\tau$, a detailed study of the filtration and Frobenius building on techniques of \cite{breuil-buzzati} shows that any strongly divisible module lifting $\rhobar$ is ordinary.  This gives us a formally smooth family of ordinary quasi-Breuil modules (i.e., with no monodromy operator).   Secondly, as a consequence of the genericity assumptions on $\tau$, the condition imposed by the existence of monodromy on an ordinary rank 3 Breuil module mod $p$ turns out to be exceedingly simple.   In this case, the vanishing of a single variable of the smooth family of quasi-Breuil modules.  

We now briefly discuss how Theorem \ref{theo1} is predicted by the Breuil-M\'ezard conjecture.  Recall that isomorphism classes of regular Serre weights are in bijection with triples $(a_2,a_1,a_0)\in\Z^3$ satisfying $p-1 > a_2-a_1,a_1-a_0 \geq 0$ and $p-1 > a_0 \geq 0$.  In the hypotheses of theorem \ref{main}, the inertial type $\tau$ contains exactly one weight $\sigma(a_2,a_1,a_0)$ in the conjectural set of Serre weights for $\rhobar$; it is an obvious weight for $\rhobar$ in the terminology of \cite{GHS}, in the Fontaine-Laffaille range. In particular, the Breuil-M\'ezard conjecture then predicts that $R^{\Box, (0,1,2), \mathrm{cris}, \tau}_{\rhobar}$ should be formally smooth and so Theorem \ref{theo1} confirms an instance of the conjecture for $\GL_3$.  

\begin{rmk} While proving Theorem \ref{theo1}, we also explicitly exhibit the geometric Breuil-M\'ezard conjecture of \cite{EG} in this setting.  Namely, we show that special fiber of   $\Spf R^{\Box, (0,1,2), \mathrm{cris}, \tau}_{\rhobar}$ inside the unrestricted universal framed Galois deformation space coincides with the special fiber of the $($Fontaine-Laffaille$)$ crystalline deformation ring with Hodge-Tate weights $(a_2 + 2, a_1 + 1, a_0)$. 
\end{rmk} 

As a consequence of our careful study of the filtration and Frobenius on strongly divisible modules, we get the following nice corollary:

\begin{theo}
\label{theo2}
Let $\rhobar: G_{\Qp}\rightarrow \GL_3(\F)$ be an ordinary Galois representation as in $($\ref{Galois representation introduction}$)$. Assume that the inertial type $\tau$ is as in Theorem \ref{main} and that the integers $a_i$ verify $p-4 > a_2-a_1,a_1-a_0>3$. Then, any  potentially crystalline lift $\rho$ of $\rhobar$, with Hodge type $(0,1,2)$ and inertial type $\tau$ is ordinary.
\end{theo}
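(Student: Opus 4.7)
The plan is to pass through integral $p$-adic Hodge theory and exploit the classification of strongly divisible modules lifting $\rhobar$ that already underlies the proof of Theorem \ref{theo1}. Given a potentially crystalline lift $\rho$ of $\rhobar$ with Hodge type $(0,1,2)$ and inertial type $\tau$, pick a $G_{\Qp}$-stable $\cO_E$-lattice $T\subset\rho$ and let $\cM$ be the strongly divisible module with descent datum (from the tame extension $K/\Qp$ trivializing $\tau$) that corresponds to $T$ under Breuil's equivalence. Its mod-$p$ reduction $\barcM$ is a Breuil module with descent datum whose associated $G_{K_{\infty}}$-representation is $\rhobar|_{G_{K_{\infty}}}$, so it suffices to show that $\cM$ itself is ordinary in the category of filtered Frobenius modules with descent datum; the ordinary flag on $\cM$ will then transfer by functoriality of $\Tst^*$ to a $G_{\Qp}$-stable flag on $\rho$ with the required graded pieces.

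The first main step is to analyze $\barcM$. The three characters $\teich{\omega}^{a_i}$ appearing in $\tau$ are pairwise distinct by the hypothesis $p-4>a_2-a_1,\,a_1-a_0>3$, so the descent datum acts semisimply with three distinct eigenlines on $\barcM$. Combining this decomposition with the ordinary shape of $\rhobar$ and the matrix techniques of \cite{breuil-buzzati}, one shows that $\barcM$ admits a complete flag $0\subset\barcM_1\subset\barcM_2\subset\barcM$ of Breuil submodules with rank-one ordinary graded pieces compatible with the upper triangular form of $\rhobar|_{I_{\Qp}}$. This is where the genericity of the $a_i$ intervenes most critically: the bounds $a_j-a_i>3$ ensure that the only Frobenius matrices compatible both with the descent datum and with the ordinary reduction are upper triangular after a normalization of the basis.

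The second step is to lift this flag to $\cM$ itself. Because the descent datum characters remain distinct mod $p$, the isotypic decomposition of $\barcM$ lifts canonically to $\cM$ over $\cO_E$, and the task reduces to verifying that the Frobenius on $\cM$ is still upper triangular in the lifted basis. One invokes the strong divisibility axiom together with a comparison between the Hodge-Tate weights $(0,1,2)$ and the filtration jumps imposed by the ordinary graded pieces: given the prescribed gaps $a_j-a_i$, any $p$-adic correction to the off-diagonal Frobenius entries would violate strong divisibility. The result is a flag $0\subset\cM_1\subset\cM_2\subset\cM$ by strongly divisible submodules with rank-one ordinary quotients, which then passes to $\rho$ as above.

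The main obstacle is this lifting step: ruling out non-trivial $p$-divisible perturbations of the off-diagonal Frobenius entries, which vanish in $\barcM$ but could a priori deform to something of the form $p\cdot(\textrm{unit})$ in $\cM$. This is precisely the rigidity that forces the moduli source $R^{\Box,\tau}_{\barcM}$ in Theorem \ref{theo1} to parameterize only ordinary strongly divisible modules, and is arguably the most delicate matrix computation of the paper; once it is established, Theorem \ref{theo2} drops out from the functoriality of $\Tst^*$ recorded in the first paragraph.
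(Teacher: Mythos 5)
Your overall strategy agrees with the paper's: convert $\rho$ into a strongly divisible lattice $\cM$ via Breuil's anti-equivalence (Theorem \ref{liu}), exhibit an ordinary flag of strongly divisible submodules, and transport it back through $\Tst^{*}$. The gap is in how you characterize the key technical step. You frame the diagonalization of the Frobenius on $\cM$ as a rigidity statement, writing that any $p$-adic correction to the off-diagonal Frobenius entries "would violate strong divisibility." That is not true, and no such rigidity holds: a strongly divisible lattice of the given type can perfectly well have nonzero off-diagonal Frobenius entries in an arbitrary framed basis. What the paper actually proves (Theorem \ref{phifilr}, via Propositions \ref{even new} and \ref{odd new}) is a \emph{constructive} $p$-adic convergence: one iteratively changes the basis, using Lemmas \ref{lemmaprimo new}, \ref{lemmasecondo new}, \ref{lemphir new new}, so that the matrix of $\varphi_2$ becomes diagonal and the matrix of the filtration becomes lower-triangular modulo successively higher powers of $p$. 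Genericity enters through the behaviour of $\varphi$ on matrices with descent data (Lemma \ref{lemphir new new}), not as an obstruction to perturbations. Similarly, "lifting the isotypic decomposition" from $\barcM$ to $\cM$ does not by itself produce the ordinary flag, since the flag is a refinement involving the filtration and Frobenius; the shape of $\Fil^2\cM$ has to be established separately (Theorem \ref{mainFiltr}), which your outline omits entirely.

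Two smaller points. First, once you have a gauge basis, to conclude that the flag of quasi-strongly divisible submodules is preserved by $N$ (hence is a flag of genuine strongly divisible submodules, so that $\Tst^{*}$ applies) you should invoke the constraint on $N$ imposed by compatibility with the descent datum, as in the matrix computation underlying Proposition \ref{essential image}; you gloss over this. Second, the existence of a flag of quasi-Breuil submodules of $\barcM$ with the right graded pieces is essentially automatic from Proposition \ref{bijection ordrer preserving}, rather than a consequence of "matrix techniques of \cite{breuil-buzzati}"; the content of the mod $p$ analysis is the explicit ordinary form of the filtration and Frobenius (Propositions \ref{corofiltrazione} and \ref{diagonalize mod p}), which is what the convergence argument requires as input.
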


We remark that for Theorem \ref{theo2}, we do not require a Fontaine-Laffaille condition on the inertial weights.

\begin{rmk} Work in progress of the two authors and Bao V. Le Hung and Daniel Le will use Kisin modules with descent datum to study potentially crystalline deformation rings for $\GL_3$ for more general $\rhobar$ (for example, semi-simple) where one expects the deformation ring not to be formally smooth.  This will have applications to Serre weight conjectures, Breuil-M\'ezard and modularity lifting. 
\end{rmk}  

The paper is organized as follows.

In \S 2, we recall various categories of semilinear algebra objects with descent data: Breuil modules, strongly divisible lattices, \'etale $\phz$-modules. We elucidate the relations among such categories and with the categories of Galois representations. We work in families, i.e. allowing coefficients in local, complete, Noetherian $\cO_E$-algebras.

The technical heart of the paper is in \S 3. After proving the uniqueness of framed Breuil module $\barcM$ associated to $\rhobar$, we perform a $p$-adic convergence argument which provides us with a complete description of the filtration and Frobenius on strongly divisible lattices lifting $\rhobar$.

In \S 4, we study the monodromy operator on Breuil modules associated to $\rhobar$. The main result, Proposition \ref{essential image}, provides us with explicit equations for the space of Breuil modules sitting inside the space of finite height modules.

The main results of the paper are in \S 5. We employ the techniques of \cite[\S 7]{EGS} to study the moduli space $R^{\Box,\tau}_{\barcM}$ of framed strongly divisible modules lifting $\barcM$. From the formal smoothness of the special fiber we deduce the main result on the formal smoothness of  the potentially crystalline deformation ring over $\Zp$ (Theorem \ref{main}). For this, we use the technique of \cite{BM11} to compare the deformation space of strongly divisible lattices and the potentially crystalline deformation ring (\ref{isomorphism moduli spaces}).

\subsection{Notations}
\label{notations}
We write $\varepsilon_p$ for the $p$-adic cyclotomic character and $\omega$ for its mod $p$ reduction. We normalize the Hodge-Tate weights in such a way that $\varepsilon_p$ has a Hodge--Tate weight $-1$. 

We consider the tamely, totally ramified extension $K/\Qp$ defined by $K\defeq \qp(\sqrt[e]{-p})$ where $e = p-1$. Recall that the choice of a uniformizer $\pi\in K$ provides us with a character 
\begin{eqnarray*}
\teich{\omega}_{\pi}: \Gal(K/\Qp)&\rightarrow&\Zp^{\times}\\
\sigma&\mapsto&\frac{\sigma(\pi)}{\pi}
\end{eqnarray*}
which induces an isomorphism $\omega_\pi:\Gal(K/\Qp)\stackrel{\sim}{\rightarrow}\Fp^{\times}$ after reduction modulo $p$. If no confusion is possible, we will simply write $\omega$ instead of $\omega_\pi$.

We fix a finite extension $E/\qp$ such that $\Hom(K,E)=\Hom(K,\qpbar)$. We write $\cO_E$ for its ring of integers,  $\F$ its residue field and $\varpi=\varpi_E\in \cO_E$ to denote an uniformizer. If $x\in \Fp$, we write $\teich{x}$ to denote its Teichm\"uller lift and, conversely, the mod $p$ reduction of an element $x\in \Zp$ will be denoted by $\overline{x}$.

We fix an embedding $K\into E$. Nothing in what follows depends on this choice. We usually write $R$ (resp. $\Rbar$) to denote a local, complete noetherian $\Oe$-algebra (resp. local artinian $\F$-algebra). If $R$ is such an algebra, we write $\Rbar$ to denote its special fiber $R\otimes_{\Oe}\F$.
All the representations and modules considered in this paper will be realized over one of the above rings $E,\, \cO_E,\, R.$
 
Given a potentially semistable $p$-adic representation $\rho: G_{\Qp}\rightarrow \GL_n(E)$, we write $\WD(\rho)$ to denote the associated Weil-Deligne representation as defined in  \cite{CDT}, Appendix B.1. We refer to $\WD(\rho)|_{I_{\Qp}}$ as the \emph{inertial type} associated to $\rho$. Note that, in particular, $\WD(\rho)$ is defined via the \emph{covariant} Dieudonn\'e module $\Dst(\rho)\defeq \underset{H/\Qp}{\ilim} (\Bst\otimes_{\Qp}\rho)^{G_{H}}$.

\section{Integral $p$-adic Hodge Theory}
\label{integralp-adicHT}

The aim of this section is to recall and extend a comparison result between Fontaine-Laffaille modules and Breuil modules with coefficients.

We write $S_{\Zp}$ to denote the usual Breuil ring: the $p$-adic completion of the divided power envelope of $\Zp[u]$ with respect to the ideal generated by the Eisenstein polynomial $E(u)\defeq u^e+p$ (compatibly with the divided power on the ideal $p\Zp[u]$). We write $\barS_{\Fp}\defeq S_{\Zp}/\left(p,\Fil^p S_{\Zp}\right)$, recalling that $\barS_{\Fp}\cong \Fp[u]/(u^{ep})$. 

If $R$ (resp. $\Rbar$) is a local noetherian $\cO_E$-algebra (resp. local artinian $\F$-algebra), we write $S_{R}$ (resp. $S_{\Rbar}$) to denote the $\mathfrak{m}_R$-adic completion of the ring $S_{\Zp}\otimes_{\Zp}R$ (resp. the ring $\barS_{\Fp}\otimes_{\Fp}\Rbar$). Note that $\barS_{\Fp}\otimes_{\Fp}\Rbar\cong \Rbar[u]/(u^{ep})$. If the rings $R$, $\Rbar$ are clear from the context, we simply write $S$, $\barS$.

The rings $S_{\Zp}$, $\barS_{\Fp}$ are endowed with additional structures. Namely, we have a continuous, semilinear Frobenius $\varphi$ (defined by $\phz(u)=u^p$), a monodromy operator $N = -u \frac{d}{du}$ and a continuous semilinear action of $\Delta\defeq\Gal(K/\qp)$ (defined by $\widehat{g}\cdot u\defeq\omega(g)u$). By base change, we obtain the evident additional structures (Frobenius, monodromy and $\Delta$ action) on $S_R$, $\barS_{\Rbar}$, endowing $R$, $\Rbar$ with the trivial Frobenius, monodromy and $\Delta$-action.

We now introduce the various categories of modules (Breuil modules, strongly divisible modules, \'etale $\varphi$-modules) and their relation to Galois representations.

A \emph{Breuil module over} $\Rbar$ is the datum of a quadruple $(\barcM_{\Rbar}, \Fil^r\barcM_{\Rbar},\varphi_r, N)$ where
\begin{enumerate}
	\item $\barcM\defeq \barcM_{\Rbar}$ is a finitely generated, free $\barS_{\Rbar}$-module; 
	\item $\Fil^r\barcM$ is a $\barS_{\Rbar}$-submodule of $\barcM$, verifying $u^{er}\barcM\subseteq \Fil^r\barcM$;
	\item the morphism $\varphi_{r}:\Fil^r\barcM\to\barcM$ is $\varphi$-semilinear and the associated fibered product $\barS_{\Fp}\otimes_{\Fp}\Fil^r\barcM\rightarrow \barcM$ is surjective;
	\item the operator $N:\barcM\to \barcM$ is $\Rbar$-linear and satisfies the following properties: 
  	\begin{enumerate}
	\item $N(P(u)x)=P(u)N(x)+N(P(u))x$ for all $x\in\barcM$, $P(u)\in\barS_{\Rbar}$; 
  	\item $u^{e}N(\Fil^r\barcM)\subseteq\Fil^r\barcM$;
  	\item $\varphi_{r}(u^{e}N(x))=N(\varphi_{r}(x))$ for all $x\in\Fil^r\barcM$.
	\end{enumerate}
\end{enumerate}	
A morphism of Breuil modules is defined as an $\barS_{\Rbar}$-linear morphism which is compatible, in the evident sense, with the additional structures (monodromy, Frobenius, filtration). If $\Rbar$ is clear from the context, we simply write $\barcM$ instead of $\barcM_{\Rbar}$.

\vspace{2mm}

A descent data relative to $\qp$ on a Breuil module $\barcM$ is the datum of an action of $\Delta$ on $\barcM$ by semilinear automorphisms and which are compatible, in the evident sense, with the additional structures on $\barcM$. 
We write $\RBrModdd[r]$ to denote the category of Breuil modules with descent data and $\Rbar$ coefficients. 

We recall (\cite{HM}, \S 2.2.1) that $\RBrModdd[r]$ is an exact category and we have an exact, faithful, contravariant functor
\begin{eqnarray*}
\Tst^*: \RBrModdd &\rightarrow&\mathrm{Rep}_{\Rbar}(G_{\Qp})\\
\barcM_{\Rbar}&\mapsto&\Tst(\barcM)\defeq \mathrm{Hom}(\barcM_{\Rbar},\widehat{A})
\end{eqnarray*}
where $\widehat{A}$ is the period ring defined in  \cite[\S 3.2]{EGH} based on \cite[\S 2.2]{breuil-inventiones}. 

We define, in the evident analogous way, the category $\RBrModddN[r]$  of qua\-si-Breuil module with descent data and coefficients: the objects and morphisms are defined as for $\RBrModdd[r]$, the only difference being that we do not require $\barcM$ to be endowed with a monodromy operator.

Recall that for a sequence $(p_n)_n\in\left(\overline{\Q}_p\right)^{\N}$ verifying $p_{n}^p=p_{n-1}$  for all $n$ we define the Kummer extension $(\Qp)_{\infty}\defeq \cup_{n\in\N}\Qp(p_n)$.
We have a faithful functor $\Tqst^*: \RBrModdd \rightarrow\mathrm{Rep}_{\Rbar}(G_{\left(\Qp\right)_{\infty}})$ by replacing $\widehat{A}$ with $A_{\text{cris}}$. The functors $\Tst^*$, $\Tqst^*$ verify the obvious compatibilities with respect to the restriction and forgetful functors: $\Tqst^*(\barcM)=\Tst^*(\barcM)\vert_{G_{\left(\Qp\right)_{\infty}}}$ if $\barcM\in \RBrModdd[r]$.

A \emph{Fontaine-Laffaille module} $(M_{\Rbar},\Fil^{\bullet}M_{\Rbar},\phi_{\bullet})$ over $\Rbar$ is the datum of 
\begin{enumerate}
	\item a finite free $\Rbar$-module $M=M_{\Rbar}$;
	\item a separated, exhaustive and decreasing filtration $\{\Fil^{j}M\}_{j\in\Z}$ on $M$ by $\Rbar$ submodules which are direct summands (the \emph{Hodge filtration});
	\item a linear Frobenius isomorphism $\phi_{\bullet}: \mathrm{gr}^{\bullet}M\rightarrow M$
\end{enumerate}

Defining the morphisms in the obvious way, we obtain the abelian category $\Rbar\text{-}\mathcal{FL}$ of Fontaine-Laffaille modules over $\Rbar$. 
Given a Fontaine-Laffaille module $M$ as above, the set of its Hodge-Tate weights is defined as
\begin{eqnarray*}
\mathrm{HT}\defeq\bigg\{i\in\N,\,\,\mathrm{rk}_{\Rbar}\bigg(\frac{\Fil^iM}{\Fil^{i+1}M}\bigg)\neq 0\bigg\}.
\end{eqnarray*}

\begin{defi}
Let $M$ be a Fontaine-Laffaille module over $\Rbar$. An $\Rbar$-basis $\underline{f}=(f_1,\dots,f_n)$ on $M$ is \emph{compatible with the filtration} if for all $i\in\N$ there exists $j_i\in\N$ such that
$\Fil^iM=\sum_{j=j_i}^{n}\Rbar\cdot f_j$. In particular, the principal symbols $(\mathrm{gr}(f_1),\dots,\mathrm{gr}(f_n))$ provide an $\Rbar$-basis for $\mathrm{gr}^{\bullet}M$.
\end{defi}

Given a Fontaine-Laffaille module and a compatible basis $\underline{f}$, it is convenient to describe the Frobenius action via a matrix $\mathrm{Mat}_{\underline{f}}(\phi_{\bullet})\in\mathrm{GL}_n(\Rbar)$, defined in the obvious way using the principal symbols $(\mathrm{gr}(f_1),\dots,\mathrm{gr}(f_n))$ as a basis on $\mathrm{gr}^{\bullet}M$.

It is customary to write $\Rbar\text{-}\mathcal{FL}^{[0,p-2]}$ to denote the full subcategory of $\Rbar\text{-}\mathcal{FL}$ formed by those modules $M$ verifying $\Fil^0M=M$ and $\Fil^{p-1}M=0$ (it is again an abelian category). We have the following description of mod $p$ Galois representations of $G_{\qp}$ via Fontaine-Laffaille modules:

\begin{theo}
\label{MainFL}
There is an exact, fully faithful contravariant functor
$$
\mathrm{T}_{\mathrm{cris}}^*:\,\,\Rbar\text{-}\mathcal{FL}^{[0,p-2]}\rightarrow\mathrm{Rep}_{\Rbar}(G_{\qp})
$$
\end{theo}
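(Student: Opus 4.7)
The plan is to reduce the statement to the classical Fontaine-Laffaille theorem over $\F$ and then bootstrap to the coefficient ring $\Rbar$ by dévissage on the maximal ideal $\mathfrak{m}_{\Rbar}$. Following Fontaine-Laffaille, I would first construct $\mathrm{T}_{\mathrm{cris}}^*$ explicitly as the functor sending an object $M$ of $\Rbar\text{-}\mathcal{FL}^{[0,p-2]}$ to the $\Rbar$-module of filtration- and Frobenius-compatible $\F$-linear homomorphisms from $M$ into a suitable period module (for instance $A_{\mathrm{cris}}/p$ endowed with its natural filtration and Frobenius). The $G_{\qp}$-action on the period ring induces a Galois action on the resulting $\Rbar$-module, and the contravariance together with functoriality are built into the construction. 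When $\Rbar=\F$, this recovers the original Fontaine-Laffaille functor, which is known to be exact and fully faithful in the range $[0,p-2]$.

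Next I would handle exactness. A short exact sequence $0\to M'\to M\to M''\to 0$ in $\Rbar\text{-}\mathcal{FL}^{[0,p-2]}$ is, in particular, strict for the Hodge filtration (since each $\Fil^{j}$ is a direct summand), so it remains short exact upon applying $\Hom(-,A_{\mathrm{cris}}/p)$ in the category of filtered Frobenius modules; this is essentially the statement of exactness in the classical case. The induced sequence of $\Rbar$-modules is manifestly $G_{\qp}$-equivariant, whence exactness in $\mathrm{Rep}_{\Rbar}(G_{\qp})$.

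For full faithfulness, the key point is that if $f: \mathrm{T}_{\mathrm{cris}}^*(M)\to \mathrm{T}_{\mathrm{cris}}^*(N)$ is $\Rbar$-linear and $G_{\qp}$-equivariant, then viewing $f$ merely as an $\F$-linear Galois equivariant map we obtain, by the classical theorem, a unique morphism $\widetilde{f}:N\to M$ in $\F\text{-}\mathcal{FL}^{[0,p-2]}$ recovering $f$. It remains to check that $\widetilde{f}$ is in fact $\Rbar$-linear. This I would verify by dévissage: filtering by the powers $\mathfrak{m}_{\Rbar}^{k}$ (which is finite since $\Rbar$ is artinian), each graded piece $\mathfrak{m}_{\Rbar}^{k}M/\mathfrak{m}_{\Rbar}^{k+1}M$ is an object of $\F\text{-}\mathcal{FL}^{[0,p-2]}$, and the $\Rbar$-action is detected functorially on these subquotients. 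Since $\widetilde{f}$ is already known to respect the underlying filtered Frobenius structure and since multiplication by any $\lambda\in\Rbar$ is itself a morphism of Fontaine-Laffaille modules, a five-lemma style diagram chase shows $\widetilde{f}\circ\lambda=\lambda\circ \widetilde{f}$ on each graded piece, hence on all of $N$.

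The main obstacle I anticipate is ensuring that the period-ring construction is genuinely compatible with the $\Rbar$-action at each stage of the dévissage; once the classical Fontaine-Laffaille theorem and the exactness of $A_{\mathrm{cris}}/p$ in the filtered category are taken as input, however, both exactness and full faithfulness follow formally. No additional hypotheses on $\Rbar$ beyond being local artinian with residue field $\F$ are required, and contravariance and exactness of the resulting functor are evident from the construction.
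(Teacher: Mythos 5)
The paper does not prove this theorem: it is stated in \S 2 purely as a recalled result from Fontaine--Laffaille theory (extended to artinian coefficients), and no argument is supplied. So there is no internal proof to compare yours against; the relevant question is whether your sketch would stand on its own.

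Your overall strategy --- construct $\mathrm{T}_{\mathrm{cris}}^*$ via filtration- and Frobenius-compatible maps into a period ring, invoke the classical $\F$-coefficient theorem, then bootstrap to $\Rbar$ --- is the standard route and is correct in outline. The exactness argument (strictness of the Hodge filtration because the $\Fil^j$ are direct summands, then reduce to the $\F$-case) is fine modulo the usual care about strict exactness when forgetting down to $\F\text{-}\mathcal{FL}$.

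The gap is in your full-faithfulness argument. You propose to verify that the classical-theory morphism $\widetilde{f}:N\to M$ is $\Rbar$-linear by d\'evissage on $\mathfrak{m}_{\Rbar}^k$, arguing on the graded pieces $\mathfrak{m}^k M/\mathfrak{m}^{k+1}M$. But this is circular: for the commutator $g := \widetilde{f}\circ m_\lambda - m_\lambda\circ\widetilde{f}$ (where $m_\lambda$ is multiplication by $\lambda\in\Rbar$) to even induce a map on graded pieces you already need $g(\mathfrak{m}^k N)\subseteq \mathfrak{m}^k M$, equivalently $\widetilde{f}(\mathfrak{m}^k N)\subseteq\mathfrak{m}^k M$, which is exactly the $\Rbar$-linearity you are trying to prove. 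The d\'evissage is in fact unnecessary: $m_\lambda$ is itself a morphism of $\F$-Fontaine--Laffaille modules, and on the level of period-ring Homs one checks directly that $\mathrm{T}_{\mathrm{cris}}^*(m_\lambda^M)$ is multiplication by $\lambda$ on $\mathrm{T}_{\mathrm{cris}}^*(M)$. Then, by the contravariant functoriality of $\mathrm{T}_{\mathrm{cris}}^*$ and the assumed $\Rbar$-linearity of $f=\mathrm{T}_{\mathrm{cris}}^*(\widetilde{f})$, one computes $\mathrm{T}_{\mathrm{cris}}^*(g)=f\circ\lambda-\lambda\circ f=0$, whence $g=0$ by faithfulness of the classical $\F$-coefficient functor. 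This single line replaces the filtration argument and closes the gap cleanly.
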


We finally recall the categories of \'etale $\varphi$-modules over $\Rbar(\!(\underline{\pi})\!)$ introduced by Fontaine (\cite{fontaine-fest}). Let $\Fp(\!(\underline{p})\!)$ be the field of norms associated to $(\Qp,p)$. In particular, $\underline{p}$ is identified with a sequence $(p_n)_n\in\left(\overline{\Q}_p\right)^{\N}$ verifying $p_{n}^p=p_{n-1}$  for all $n$. We define the category $\Rbar\mbox{-}\mathfrak{Mod}$ whose objects are free $\Rbar\otimes_{\Fp}\Fp(\!(\underline{p})\!)$-modules of finite rank $\mathfrak{D}$ endowed with a semilinear Frobenius map $\varphi:\mathfrak{D}\rightarrow \mathfrak{D}$ whose action is \'etale.

A formal modification (allowing $\Rbar$-coefficients) of work of Fontaine \cite{fontaine-fest} provides an anti-equivalence
\begin{eqnarray*}
\Rbar\mbox{-}\mathfrak{Mod}& \stackrel{\sim}{\longrightarrow}&\mathrm{Rep}_{\Rbar}(G_{(\Qp)_{\infty}})\\
\mathfrak{D}&\longmapsto& \mathrm{Hom}_{\varphi}\left(\mathfrak{D},\Fp(\!(\underline{p})\!)^{\text{sep}}\right).
\end{eqnarray*}

Let us consider $\pi\defeq \sqrt[e]{-p}\in K$. We can fix a sequence $(\pi_n)_n\in\left(\overline{\Q}_p\right)^{\N}$  such that $\pi_n^e=p_n$ for all $n\in \N$ and which is compatible with the norm maps $K(\pi_{n+1})\rightarrow K(\pi_n)$ (cf. \cite{breuil-buzzati}, Appendix A).
Letting $K_{\infty}\defeq \cup_{n\in\N}K(\pi_n)$, we have a canonical isomorphism $\Gal(K_{\infty}/(\Qp)_{\infty})\rightarrow \Delta$ and we identify $\omega$ to a character on $\Gal(K_{\infty}/(\Qp)_{\infty})$.

The field of norms $\Fp(\!(\underline{\pi})\!)$ associated to $(K, \pi)$ is then endowed with an action of $\Delta$ given by $\widehat{g}\cdot \underline{\pi}=\omega(g) \underline{\pi}$. We can therefore define the category $\Rbar\mbox{-}\mathfrak{Mod}_{\mathrm{dd}}$ of \'etale $(\varphi,\Rbar\otimes_{\Fp}\Fp(\!(\underline{\pi})\!))$-modules with descent data: an object $\mathfrak{D}$ is defined in the analogous, evident way as for the category  $\Rbar\mbox{-}\mathfrak{Mod}$, but we moreover require that $\mathfrak{D}$ is endowed with a semilinear action of $\Gal(K_{\infty}/(\Qp)_{\infty})$ and the Frobenius $\varphi$ is $\Gal(K_{\infty}/(\Qp)_{\infty})$-equivariant.

By allowing $\Rbar$-coefficients we deduce from \cite{HM}, Appendix A (building on the classical result of Fontaine) the anti-equivalence
\begin{eqnarray*}
\Rbar\mbox{-}\mathfrak{Mod}_{\mathrm{dd}}&\stackrel{\sim}\longrightarrow&
\mathrm{Rep}_{\Rbar}(G_{(\Qp)_{\infty}})\\
\mathfrak{D}&\mapsto&\mathrm{Hom}_{\varphi}\left(\mathfrak{D},\Fp(\!(\underline{\pi})\!)^{\text{sep}}\right).
\end{eqnarray*}

The main result concerning the relations between the various categories and functors introduced so far is the following:
\begin{prop}
\label{proposition comparison}
There exist faithful functors 
$$M_{\Fp(\!(\underline{\pi})\!)}: \RBrModdd[r]\rightarrow \Rbar\mbox{-}\mathfrak{Mod}_{\mathrm{dd}}$$ and
$$\mathcal{F}:\Rbar\mbox{-}\mathcal{FL}^{[0,p-2]}\rightarrow\Rbar\mbox{-}\mathfrak{Mod}$$
fitting in the following commutative diagram:
\begin{eqnarray*}
\xymatrix@=3pc{
\RBrModdd[r]\ar^{M_{\Fp(\!(\underline{\pi})\!)}}[rr]\ar^{\Tst^*}[d]&&
\Rbar\text{-}\mathfrak{Mod}_{dd}
\ar_{\hspace{-1cm}\mathrm{Hom}(\_,\Fp(\!(\underline{p})\!)^s)}[dl]\\
\mathrm{Rep}_{\Rbar}(G_{\qp})\ar^{\mathrm{Res}}[r]&\mathrm{Rep}_{\Rbar}(G_{(\qp)_{\infty}})&\\
\Rbar\text{-}\mathcal{FL}^{[0,p-2]}\ar_{\mathrm{T}_{\mathrm{cris}}^*}[u]\ar^{\mathcal{F}}[rr]&&
\Rbar\text{-}\mathfrak{Mod}
\ar^{\hspace{-1cm}\mathrm{Hom}(\_,\Fp(\!(\underline{p})\!)^s)}[ul]
\ar_{\_\otimes_{\Fp(\!(\underline{p})\!)}\Fp(\!(\underline{\pi})\!)}[uu]
}
\end{eqnarray*}
where the the functor $\mathrm{Res}\circ \mathrm{T}_{\mathrm{cris}}^*$
is fully faithful.
\end{prop}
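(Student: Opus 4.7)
The plan is to construct each functor via standard integral $p$-adic Hodge theory techniques, then verify the commutativity of the diagram by tracing objects through the various period rings, and finally deduce full faithfulness of $\mathrm{Res} \circ \Tcris^*$ from a Breuil--Kisin type comparison theorem. Faithfulness of $M_{\Fp(\!(\underline{\pi})\!)}$ and $\mathcal{F}$ will then follow formally once the commutative diagram is in place. I would construct $M_{\Fp(\!(\underline{\pi})\!)}$ along the Caruso--Liu recipe: given $(\barcM, \Fil^r\barcM, \varphi_r, N)$ with descent data in $\RBrModdd[r]$, the pair $(\Fil^r\barcM, \varphi_r)$ determines an underlying Kisin module $(\mathfrak{M}, \varphi)$ of $E(u)$-height $\leq r$ over the $\Rbar$-analog of $\Fp[[u]]$ (cf.\ \cite{HM}); inverting $u$, or equivalently base-changing along $\Fp[[u]] \otimes_{\Fp} \Rbar \hookrightarrow \Fp(\!(\underline{\pi})\!) \otimes_{\Fp} \Rbar$ (with $u\mapsto\underline{\pi}$), produces an étale $\varphi$-module. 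The action of $\Delta$ on $\barcM$, compatible with $\widehat{g}\cdot u = \omega(g)u$, transports to a $\Gal(K_{\infty}/(\Qp)_{\infty})$-action on the resulting étale module. The monodromy $N$ is simply forgotten on the target.

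For $\mathcal{F}$, a Fontaine-Laffaille module $M$ over $\Rbar$ with Hodge-Tate weights in $[0,p-2]$ is first sent to a quasi-Breuil module without descent data via the classical Fontaine--Laffaille-to-Breuil comparison functor (described, with $\Rbar$-coefficients, following \cite{breuil-inventiones} and its extensions in \cite{HM}); applying the construction of the preceding paragraph (with trivial descent data) then yields the étale $\varphi$-module over $\Fp(\!(\underline{p})\!) \otimes_{\Fp} \Rbar$. For the commutativity, the upper triangle records that the restriction $\Tst^*|_{G_{(\Qp)_{\infty}}}$ coincides with $\mathrm{Hom}_{\varphi}(-, \Fp(\!(\underline{\pi})\!)^{\mathrm{sep}})$ applied to $M_{\Fp(\!(\underline{\pi})\!)}(\barcM)$; this follows from the compatibility of the period ring $\widehat{A}$ with $\Fp(\!(\underline{\pi})\!)^{\mathrm{sep}}$, Frobenius-equivariantly and with descent. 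The lower triangle is the analogous, classical Fontaine-Laffaille–étale $\varphi$-module compatibility. The right-hand vertical arrow $\_\otimes_{\Fp(\!(\underline{p})\!)} \Fp(\!(\underline{\pi})\!)$, viewed on Galois representations via the two anti-equivalences, corresponds to the identity functor on $\mathrm{Rep}_{\Rbar}(G_{(\Qp)_{\infty}})$, so the right-hand square commutes tautologically.

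The main obstacle is the full faithfulness of $\mathrm{Res}\circ \Tcris^*$, which is the deepest input. Since $\Tcris^*$ is already fully faithful by Theorem \ref{MainFL}, it suffices to show that $\mathrm{Res}: \mathrm{Rep}_{\Rbar}(G_{\Qp}) \to \mathrm{Rep}_{\Rbar}(G_{(\Qp)_{\infty}})$ is fully faithful on the essential image of $\Tcris^*$, i.e.\ on mod $p$ crystalline representations with Hodge-Tate weights in $[0,p-2]$. In the case $\Rbar = \F$, this is a version of Breuil's theorem (equivalent, in the Kisin module formalism, to the fully faithfulness statement \cite[Corollary 2.1.14]{kisin-crystal} extended to the torsion setting): the Frobenius on the associated étale $\varphi$-module mod $p$, together with the Hodge-Tate weight bound, determines both the crystalline representation and the morphisms between any two such. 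For general local Artinian $\Rbar$, I would dévisse along a composition series of the underlying $\Rbar$-module structure, reducing to the residue field case via the Five Lemma applied to the exact sequences of Galois representations arising from multiplication by a uniformizer in $\Rbar$. Once $\mathrm{Res}\circ \Tcris^*$ is known to be fully faithful, faithfulness of $M_{\Fp(\!(\underline{\pi})\!)}$ and $\mathcal{F}$ follows by chasing the diagram: any morphism killed by the étale functor is sent to zero on Galois representations after restriction to $G_{(\Qp)_{\infty}}$, hence on $G_{\Qp}$-representations by full faithfulness of $\mathrm{Res}\circ\Tcris^*$ (resp.\ by faithfulness of $\Tst^*$ recalled earlier), and then vanishes by the faithfulness of the source-side Galois functors.
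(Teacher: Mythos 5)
Your plan matches the paper's: the paper itself offers no proof beyond pointing at \cite{HM} (Appendix A), \cite{breuil-normes}, and \cite{caruso-liu}, and your reconstruction of $M_{\Fp(\!(\underline{\pi})\!)}$ via the Caruso--Liu passage Breuil module $\to$ Kisin module $\to$ \'etale $\varphi$-module, together with the standard Fontaine--Laffaille comparison for $\mathcal{F}$ and the Breuil/Kisin full-faithfulness of restriction in the Fontaine--Laffaille range, is exactly the content being cited. The diagram-chase yielding faithfulness of $M_{\Fp(\!(\underline{\pi})\!)}$ and $\mathcal{F}$ at the end is also right (though you invoke \emph{full} faithfulness of $\mathrm{Res}\circ\Tcris^*$ where plain faithfulness of restriction, which holds tautologically because restricting does not change the underlying linear map, already suffices).

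One point you should tighten: the construction of $\mathcal{F}$ as stated does not quite produce a module over the right field. Applying ``the construction of the preceding paragraph with trivial descent data'' to a Breuil module over the ramified $K=\Qp(\sqrt[e]{-p})$ gives an \'etale $\varphi$-module over $\Rbar\otimes_{\Fp}\Fp(\!(\underline{\pi})\!)$, not over $\Rbar\otimes_{\Fp}\Fp(\!(\underline{p})\!)$; the ring over which you invert $u$ is governed by $K$, not by the descent data. To land in $\Rbar$-$\mathfrak{Mod}$ you must either work with Breuil/Kisin modules for the base field $\Qp$ itself (so $e=1$, $E(u)=u+p$) as in the classical Fontaine--Laffaille/Wach comparison, or first pass through $\Rbar$-$\mathfrak{Mod}_{\mathrm{dd}}$ and then use \'etale (Galois) descent along $\Delta=\Gal(K_\infty/(\Qp)_\infty)$. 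With that correction, the two triangles and the square commute as you describe. Your d\'evissage for full faithfulness over a general Artinian $\Rbar$ is also more elaborate than necessary: an $\Rbar$-FL-module is automatically an $\Fp$-FL-module on forgetting scalars, the functors are compatible, and an $\Rbar$-linear equivariant map is just an $\Fp$-linear equivariant map respecting the extra $\Rbar$-action --- so the $\Rbar$-statement follows from the residue-field case with no Five Lemma needed.
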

The functors $M_{\Fp(\!(\underline{\pi})\!)}$, $\mathcal{F}$ are defined in \cite{HM}, Appendix A, building on the classical work of Breuil \cite{breuil-normes} and Caruso-Liu \cite{caruso-liu}.

In certain cases, the description of the functor $M_{\Fp(\!(\underline{\pi})\!)}$ is particularly concrete.
Assume that the Breuil module $\barcM_{\Rbar}$ has rank $n$, with descent data associated to a niveau one Galois type
${\tau}:I_{\qp}\rightarrow \mathrm{GL}_n(\cO_E)$.

By fixing a framing $\overline{\tau}=\omega^{a_1}\oplus\dots \omega^{a_n}$ we have a basis $(e_1,\dots,e_n)$ for $\barcM_{\Rbar}$  and a system of generators $(f_1,\dots,f_n)$ for $\Fil^r\barcM_{\Rbar}$ which are compatible with  $\overline{\tau}$:
$$
\widehat{g}\cdot e_i=(\omega^{a_i}(g)\otimes 1)e_i,\qquad
\widehat{g}\cdot f_i=(\omega^{a_i}(g)\otimes 1)f_i
$$
for all $i=1,\dots,n$ and all $g\in \Delta$ (cf. \cite{HM}, \S 2.2.3).

In this case we say that $\underline{e}$, $\underline{f}$ are \emph{compatible with the Galois type} $\overline{\tau}$, or that $\un{e},\,\un{f}$ are a framed basis and a framed system of generators respectively (in the terminology of \cite{HM} one would say that $\barcM_{\Rbar}$ is of type $\tau$, cf. Definition 2.2.6 in \emph{loc. cit.}).

\begin{lem}
\label{lemma lawdd 1}
Let $\barcM$ be a Breuil module of rank $n$ over $\Rbar$, with descent data associated to a Galois type
$\overline{\tau}:I_{\qp}\rightarrow \mathrm{GL}_n(\cO_E)$ and let $\underline{e}$, $\underline{f}$ be a basis for $\barcM$ and a system of generators for $\Fil^r\barcM$ respectively, which are moreover compatible with $\overline{\tau}$.

Write $V=V_{\underline{e},\underline{f}}\in M_{n}(\Rbar\otimes_{\F}\barS)$ for the matrix giving the coordinates of $\underline{f}$ in the basis $\underline{e}$ and $A\defeq \mathrm{Mat}_{\underline{e},\underline{f}}(\varphi_r)\in
\mathrm{GL}_{n}(\Rbar\otimes_{\F}\barS)$
for the matrix describing the Frobenius action on $\barcM$ with respect to $\underline{e}$, $\underline{f}$.

Then there exists a basis $\mathfrak{e}$ for $M_{\Fp(\!(\underline{\pi})\!)}(\barcM^{\ast})$ (where $\barcM^*$ denotes the associated dual Breuil module, cf. \cite{EGH}, discussion before Corollary 3.2.9), compatible with the dual descent data, such that the Frobenius action is described by 
$$
\mathrm{Mat}_{\underline{\mathfrak{e}}}(\phi)=\widehat{V}^t\big(\widehat{A}^{-1}\big)^{t}\in 
M_{n}(\Rbar\otimes_{\fp}\fp[[\underline{\pi}]])
$$
where $\widehat{V}$, $\widehat{A}$ are lifts of $V,\,A$ in $M_{n}(\Rbar\otimes_{\fp}\fp[[\underline{\pi}]])$ via the reduction morphism $\Rbar\otimes_{\Fp}\fp[[\underline{\pi}]]\onto \Rbar\otimes_{\fp}\barS$.
\end{lem}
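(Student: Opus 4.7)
The plan is to track the framed data $(\underline{e},\underline{f},V,A)$ of $\barcM$ through the duality for Breuil modules and then through the functor $M_{\Fp(\!(\underline{\pi})\!)}$, computing the Frobenius matrix of the resulting étale $\varphi$-module in a basis compatible with the dual descent data. The argument has three main steps.

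First, using the duality construction for framed Breuil modules (cf.\ \cite{EGH}, \S 3.2, and the discussion preceding Corollary 3.2.9), I realize $\barcM^*$ concretely. The $\barS$-dual basis $\underline{e}^*$ of $\underline{e}$ is compatible with the dual descent type (the characters $\omega^{a_i}$ become $\omega^{-a_i}$), and a framed system of generators for $\Fil^r\barcM^*$ is given by $\underline{f}^* = \underline{e}^*\cdot u^{er} V^{-t}$, the matrix $u^{er}V^{-t}$ lying in $M_n(\Rbar \otimes \barS)$ thanks to the axiom $u^{er}\barcM\subseteq \Fil^r \barcM$. The standard duality pairing
$$
\varphi_r^{\ast}(\beta)(\varphi_r(\alpha)) \;=\; \varphi_r^{\barS}(\beta(\alpha)), \qquad \alpha\in \Fil^r\barcM,\ \beta\in\Fil^r\barcM^*,
$$
together with $\varphi_r^{\barS}(u^{er})= c\in(\barS)^\times$, then yields $\mathrm{Mat}_{\underline{e}^*,\underline{f}^*}(\varphi_r^*)=c\cdot(A^{-1})^t$.

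Second, following the construction of $M_{\Fp(\!(\underline{\pi})\!)}$ in \cite{HM}, Appendix A (itself building on \cite{caruso-liu} and \cite{breuil-normes}), I pass to the étale $\varphi$-module by lifting the framed data along the reduction $\Rbar\otimes_{\fp}\fp[[\underline{\pi}]] \onto \Rbar\otimes_{\fp}\barS$ and formally inverting $\underline{\pi}$. Taking as basis $\mathfrak{e}$ of $M_{\Fp(\!(\underline{\pi})\!)}(\barcM^*)$ the image of $\underline{e}^*\cdot\widehat{V}^{-t}$ (equivalently, $\underline{\pi}^{-er}$ times a lift of $\underline{f}^*$), the scalar $c$ and the $\varphi$-twists $\varphi(\widehat V)$ are absorbed by the compensating factors of $\underline{\pi}^{erp}=\varphi(\underline{\pi}^{er})$ produced by the extension of $\varphi_r^*$ to the étale Frobenius $\phi$. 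A direct computation then gives $\mathrm{Mat}_{\underline{\mathfrak{e}}}(\phi)=\widehat{V}^t(\widehat{A}^{-1})^t$.

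Compatibility of $\mathfrak{e}$ with the dual descent data is immediate from the construction, since $\underline{e}^*$ is compatible with the dual framing $\omega^{-a_i}$ and the correction $\widehat{V}^{-t}$ intertwines the framings by the same transformation rule as $\widehat{V}$ does in the original Breuil module. The main technical obstacle lies in the second step: one must precisely identify the basis $\mathfrak{e}$ and verify that all factors of $u^{er}$, $\underline{\pi}^{erp}$, and $c$ cancel \emph{exactly}, so that the final matrix is $\widehat{V}^t(\widehat{A}^{-1})^t$ and not a unit multiple or $\varphi$-twisted version of it.
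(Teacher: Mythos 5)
The paper does not give a proof of this lemma; it is stated immediately after citing \cite{HM}, Appendix A for the constructions of $M_{\Fp(\!(\underline{\pi})\!)}$ and $\mathcal{F}$, so the intended reader is meant to extract the matrix formula from the explicit description of that functor (which in turn rests on the comparison between Breuil modules and Kisin/\'etale $\varphi$-modules in Caruso--Liu and Breuil). There is therefore no proof in this paper against which to compare your argument; what follows is an assessment on its own merits.

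Your Step 1 is carried out correctly and carefully: with $\underline{f}^* = \underline{e}^*\cdot u^{er}V^{-t}$ one indeed has $\underline{f}^*_j(\underline{f}_i)=u^{er}\delta_{ij}$, and the duality pairing $\varphi_r^*(\beta)(\varphi_r(\alpha))=\varphi_r^{\barS}(\beta(\alpha))$ with $\varphi_r^{\barS}(u^{er})=c$ then gives $(A^*)^{t}A=cI$, i.e.\ $\mathrm{Mat}_{\underline{e}^*,\underline{f}^*}(\varphi_r^*)=c\,A^{-t}$ as you claim. The compatibility of $\underline{e}^*$ with the dual framing is also fine.

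The genuine gap is exactly where you flag it. Step 2 is the actual content of the lemma --- converting the framed data $(V^*, A^*) = (u^{er}V^{-t},\ cA^{-t})$ of $\barcM^*$ into the $\phi$-matrix of $M_{\Fp(\!(\underline{\pi})\!)}(\barcM^*)$ in a basis compatible with the descent data --- and you replace it with the assertion ``a direct computation then gives $\widehat{V}^t(\widehat{A}^{-1})^t$.'' Nothing in your argument pins down (a) the precise normalization of $\varphi_r$ on $\barS$ and the unit $c$; (b) the exact recipe by which \cite{HM} recovers a Kisin/\'etale $\varphi$-module from a Breuil module, which involves the base change along $\mathfrak{S}\to S$ through $\varphi$, so that a $\varphi$-twist of $\widehat{V}$ could plausibly survive; or (c) why the correction matrix $\widehat{V}^{-t}$ in your proposed basis $\mathfrak{e}=\underline{e}^*\cdot\widehat{V}^{-t}$ conspires with the $\underline{\pi}^{er}$ coming from $\varphi(\underline{\pi}^{er})=\underline{\pi}^{erp}$ to cancel both $c$ and the $\varphi$-twist on the nose. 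You acknowledge this yourself in the final paragraph; until that bookkeeping is carried out against the explicit construction of $M_{\Fp(\!(\underline{\pi})\!)}$, the proof is an outline rather than a complete argument. The route you chose (dualize, then pass to \'etale $\varphi$-modules) is the natural one and is almost certainly what the authors have in mind --- the duality is introduced precisely so that $u^{er}V^{-t}$ is integral and the final matrix lands in $M_n(\Rbar\otimes_{\fp}\fp[[\underline{\pi}]])$ rather than requiring denominators --- but the verification is not optional here: the lemma is used quantitatively in Proposition \ref{unicity1} and Theorem \ref{isomorphism moduli spaces}, where a stray unit or twist would change the resulting Fontaine--Laffaille module.
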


We now recall some result in characteristic zero. Fix a positive integer $r< p-1$ and let $R$ be a complete local noetherian $\cO_E$-algebra. The category $\RModdd[r]$ of \emph{strongly divisible $R$-modules} (in Hodge-Tate weights $[0,r]$, with descent data) consists of finitely generated free $S_R$-modules ${\cM}$ together with a sub $S_R$-module $\Fil^r{\cM}$, additive maps $\varphi_r:\Fil^r{\cM}\rightarrow{\cM}$, $N:\cM\rightarrow \cM$ and $S_R$-semilinear bijections $\widehat{g}:{\cM}\rightarrow{\cM}$ for each $g\in\Delta$ such that the following conditions hold:
\begin{itemize}
\item $\Fil^r{\cM}$ contains $(\Fil^rS_R){\cM}$;
\item $\Fil^r{\cM}\cap I{\cM}=I\Fil^r{\cM}$ for all ideals $I$ of $R$;
\item $\varphi_r(sx)=\varphi(s)\varphi(x)$ for all $s\in S_R$ and $x\in{\cM}$;
\item $\varphi_r(\Fil^r{\cM})$ generates $\cM$ over $S_R$;
\item $N(sx)=N(s)x+sN(x)$ for all $s\in S_R$ and $x\in{\cM}$;
\item $N\varphi_r=p\varphi_r N$;
\item $E(u)N(\Fil^r{\cM})\subset\Fil^r{\cM}$;
\item for all $g\in\Delta$, $\widehat{g}$ commutes with $\varphi_r$ and $N$, and preserves $\Fil^r{\cM}$;
\item $\widehat{g_1\circ g_2}=\widehat{g}_1\circ\widehat{g}_2$ for all $g_1,g_2\in\Delta$.
\end{itemize}
The morphisms are $S$-module homomorphisms that preserve $\Fil^{r}{\cM}$ and commute with $\varphi_{r}$, $N$, and the descent data action.

We have a contravariant functor (cf. \cite{savitt-CDT}, \S 4 and Corollary 4.12)
$\Tst^{*}: \RModdd[r]\rightarrow\mathrm{Rep}_{R}(G_{\Qp})$ which is compatible with reduction mod $p$:
$$
\Tst^{*}(\cM)\otimes_{R}\F\cong\Tst^*(\cM\otimes_{R}\F).
$$

Let $\mathrm{Rep}^{\text{$K$-st}, [-r,0]}_{\cO_E}(G_{\Qp})$ be the category of $G_{\Qp}$-stable $\cO_E$-lattices inside $E$-valued, finite dimensional $p$-adic Galois representation of $G_{\Qp}$ becoming {semi-stable} over $K$ and with Hodge--Tate weights in $[-r,0]$.
We have a contravariant functor
$\Tst^{*}: \OEModdd[r]\rightarrow
\mathrm{Rep}^{\text{$K$-st}, [-r,0]}_{\cO_E}(G_{\Qp})$ where $\mathrm{Rep}^{\text{$K$-st}, [-r,0]}_{\cO_E}(G_{\Qp})$ is the category of $G_{\Qp}$-stable $\cO_E$-lattices inside $E$-valued, finite dimensional $p$-adic Galois representation of $G_{\Qp}$ becoming {semi-stable} over $K$ and with Hodge--Tate weights in $\{-r,0\}$ (cf. \cite{EGH}, Section 3.1, where the functor would be noted by $\Tst^{\Qp}$)

The following deep theorem provides the link between lattices in potentially semi-stable Galois representations and strongly divisible modules over $\cO_E$:

\begin{theo}[\cite{liu-breuil}, \cite{EGH}]
\label{liu}
The contravariant functor
$$
\Tst^*:\OEModdd\rightarrow \mathrm{Rep}^{\text{$K$-st}, [-r,0]}_{\Oe}(G_{\Qp})
$$
establishes an anti-equivalence of categories if $r<p-1$.

Moreover, by letting $\rho\defeq \Tst^*({\cM})\otimes_{\cO_E}E$ and $\Dst^{*}(\rho)$ be the associated contravariant filtered $(\varphi,N)$-module, we have an isomorphism
\begin{equation}
\label{relation descent}
{\cM}\otimes_{S_{\cO_E}}E\cong \Dst^{*}(\rho)
\end{equation}
via the base change $S_{\cO_E}\rightarrow E$ defined by $u\mapsto 0$.
\end{theo}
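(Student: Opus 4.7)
The plan is to bootstrap from Liu's classical anti-equivalence between strongly divisible $\Zp$-modules (without descent data, without coefficients) and $G_{\Qp}$-stable lattices in semistable representations of $G_{\Qp}$ with Hodge--Tate weights in $[-r,0]$ (valid for $r<p-1$), and then add in successively the descent data and the $\cO_E$-coefficients. The comparison isomorphism $\cM\otimes_{S_{\cO_E}}E\cong \Dst^*(\rho)$ will come ``for free'' from Breuil's original construction.

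\textbf{Step 1: Incorporating descent data.} Let $\cM\in \OEModdd[r]$, and forget the $\cO_E$-structure, viewing $\cM$ as a strongly divisible $S_{\Zp}$-module equipped with a semilinear $\Delta$-action. By the classical Liu theorem applied over $K$ (noting $S_{\Zp}$ is the standard Breuil ring for the totally ramified extension $K/\Qp$), the forgetful data $(\cM,\Fil^r\cM,\varphi_r,N)$ corresponds functorially to a $G_K$-stable $\Zp$-lattice $L$ inside a semistable representation of $G_K$ with Hodge--Tate weights in $[-r,0]$. One then checks that the $\Delta$-action on $\cM$ is transported, via the period ring $\widehat{A}$ (which carries compatible $G_{\Qp}$-action extending the $G_K$-action), to an action of $\Delta$ on $L$ commuting with $G_K$, which precisely amounts to extending the $G_K$-representation to a $G_{\Qp}$-representation on $L$. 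Conversely, any lattice in a potentially semistable representation becoming semistable over $K$ restricts to a $G_K$-stable lattice producing a strongly divisible module with descent data. This yields the anti-equivalence in the coefficient-free case.

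\textbf{Step 2: Adding $\cO_E$-coefficients.} An object of $\OEModdd[r]$ is, by definition, a strongly divisible module with descent data together with a compatible $\cO_E$-action commuting with $\varphi_r$, $N$ and $\widehat{g}$. By functoriality of Step 1, this $\cO_E$-action is transported to an $\cO_E$-linear structure on the associated $G_{\Qp}$-stable lattice, giving an object of $\mathrm{Rep}^{\text{$K$-st},[-r,0]}_{\cO_E}(G_{\Qp})$. Full faithfulness and essential surjectivity follow from the corresponding properties in Step 1 applied to the underlying $\Zp$-modules. The compatibility $\Tst^*(\cM)\otimes_{\cO_E}\F\cong \Tst^*(\cM\otimes_{\cO_E}\F)$ is standard and proved in the same way that Savitt handles it in \cite{savitt-CDT}.

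\textbf{Step 3: The comparison $\cM\otimes_{S_{\cO_E}}E\cong \Dst^*(\rho)$.} After inverting $p$, the strongly divisible module $\cM[1/p]$ becomes a free $S_E$-module of finite rank equipped with $\Fil^r$, $\varphi_r$, $N$ and a $\Delta$-action. Breuil's fundamental comparison identifies the base change along $u\mapsto 0$, i.e. $\cM\otimes_{S_{\cO_E}}E$, with the contravariant filtered $(\varphi,N,\Gal(K/\Qp))$-module $\Dst^*(\rho)$: the filtration on the target arises from $\Fil^r\cM$ via the map $u\mapsto 0$ combined with the divided power structure, the Frobenius from $\varphi_r$, and the monodromy from $N$. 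Verifying these identifications on the nose is routine once the anti-equivalence of Step 2 is in hand, since both sides define compatible contravariant functors from $\OEModdd[r][1/p]$ to filtered $(\varphi,N,\Delta)$-modules that agree on $\Tst^*$-data.

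\textbf{Main obstacle.} The technical heart of the argument is buried in Step 1, namely the transport of the $\Delta$-action from $\cM$ to the Galois lattice and the essential surjectivity onto all lattices in $K$-semistable representations. The full faithfulness on morphisms is straightforward from Liu's result, but essential surjectivity requires producing, from an arbitrary $G_{\Qp}$-stable $\Zp$-lattice $L$ in a $K$-semistable representation, a strongly divisible module with descent data whose functor of points reproduces $L$; this relies on the fact that the period ring $\widehat{A}$ carries a $\Delta$-action compatible with its $G_K$-action, together with a careful analysis of the compatibility of Liu's construction with Galois descent along $K/\Qp$. All of this is done in \cite{liu-breuil} and extended to the coefficient setting in \cite{EGH}, so in the paper this theorem is simply cited.
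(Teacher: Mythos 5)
The paper does not prove this theorem; it is stated as a citation to \cite{liu-breuil} and \cite{EGH}, so there is no internal proof to compare against. Your sketch is a reasonable outline of what those references actually do (Liu's theorem without descent data and without coefficients as a black box, then transport the $\Delta$-action via the period ring, then transport $\cO_E$-structures by functoriality, then read off the $u\mapsto 0$ comparison from Breuil's construction), and you correctly flag at the end that this is precisely why the paper cites rather than proves the result.

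Two places where the sketch is a bit more optimistic than the reality of the references: first, the phrase ``an action of $\Delta$ on $L$ commuting with $G_K$'' is imprecise --- $\Delta=\Gal(K/\Qp)$ does not commute with $G_K$ inside $G_{\Qp}$; what one actually needs is that the semilinear $\Delta$-action on $\cM$ produces, via the $G_{\Qp}$-action on $\widehat{A}$, automorphisms of $L$ satisfying the semidirect-product compatibility with the existing $G_K$-action, so that the two glue into a $G_{\Qp}$-action. Second, in Step 2 it is not automatic that the image lattice is $\cO_E$-free; one must check that the induced $\cO_E$-module is a lattice, which takes a small argument. Neither point is a genuine gap in your understanding --- you correctly identify essential surjectivity, together with the monodromy and descent bookkeeping, as the technical heart, and you correctly attribute it to the sources. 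Given that the paper simply cites the result, your write-up does exactly what a reader would want: it records the logical shape of the cited proof without claiming to reprove it.
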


As for Breuil modules, we can define the category of \emph{quasi-strongly divisible} $R$-modules $\RModddN[r]$, where we omit the requirement for a monodromy operator (cf. \cite{liu-breuil}, \S 2). Again we have a contravariant functor $\Tqst^{*}$ towards the category of $G_{(\Qp)_{\infty}}$-representations over $\cO_E$, inducing an anti-equivalence:
\begin{theo}[\cite{liu-breuil}]
\label{liu-quasi}
The contravariant functor
$$
\Tqst^*:\OEModddN[r]\rightarrow \mathrm{Rep}^{\text{$K$-st}, [-r,0]}_{\Oe}(G_{(\Qp)_{\infty}})
$$
establishes an anti-equivalence of categories if $r<p-1$.
\end{theo}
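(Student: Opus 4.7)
The plan is to factor the functor through Kisin modules with descent data and reduce to the monodromy case already recalled in Theorem \ref{liu}. By work of Kisin (extended with descent datum as in \cite{HM}), the category of Kisin modules of height $\leq r$ over $\mathfrak{S}_{\cO_E}$ with descent datum from $\Gal(K_\infty/(\Qp)_\infty)\simeq \Delta$ is anti-equivalent, via a Fontaine-style functor into Fontaine's integral period ring, to the category $\mathrm{Rep}^{\text{$K$-st},[-r,0]}_{\cO_E}(G_{(\Qp)_\infty})$: indeed, the $G_{(\Qp)_\infty}$-representations coming by restriction from $K$-semistable $G_{\Qp}$-representations with Hodge-Tate weights in $[-r,0]$ are precisely those whose associated \'etale $\varphi$-module contains a Kisin lattice of height $\leq r$ compatible with the descent datum (compare with the role played by $\Fp(\!(\underline{\pi})\!)$-modules in Proposition \ref{proposition comparison}).

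Next, I would construct the comparison between Kisin modules and quasi-strongly divisible modules. Given $\mathfrak{M}$ over $\mathfrak{S}_{\cO_E}$ with descent datum, set $\cM \defeq S_{\cO_E}\otimes_{\varphi,\mathfrak{S}_{\cO_E}}\mathfrak{M}$; define $\Fil^r \cM$ to be the preimage of $E(u)^r\cM[1/p]$ under the canonical inclusion $\cM \hookrightarrow \mathfrak{M}[1/p]\otimes_{\mathfrak{S}_{\cO_E}[1/p]}S_{\cO_E}[1/p]$; let $\varphi_r$ be determined by $\varphi_{\mathfrak{M}}$; and extend the descent datum by base change. One checks this lies in $\OEModddN[r]$ (the hypothesis $r<p-1$ is used to control the divisibility of $\varphi$ on $\Fil^r S_{\cO_E}$), and the inverse functor sends $\cM$ to the $\varphi$-stable $\mathfrak{S}_{\cO_E}$-sublattice of $\cM$ generated by $\varphi_r(\Fil^r\cM)$, exactly as in Liu's reconstruction in the monodromy setting of Theorem \ref{liu}.

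Finally, one verifies compatibility with the Galois functors: both $\Tqst^*(\cM)$ (computed via $A_{\mathrm{cris}}$) and the Galois functor attached to the Kisin module (computed via the Witt ring of the tilt) agree as $G_{(\Qp)_\infty}$-representations after the canonical embedding of period rings. The main obstacle is the essential surjectivity at the integral level: one must show that every $G_{(\Qp)_\infty}$-stable $\cO_E$-lattice inside the restriction of a $K$-semistable representation arises from some quasi-strongly divisible module, not only those that extend to $G_{\Qp}$-stable lattices. This requires running the monodromy-free analogue of Liu's integral lattice argument, again exploiting $r<p-1$ to guarantee that the reconstructed $\mathfrak{S}_{\cO_E}$-lattice has the correct $\varphi$-stability and height properties, and that no information is lost by dropping the monodromy operator from the data on the $S_{\cO_E}$-side.
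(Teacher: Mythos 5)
The paper does not prove Theorem \ref{liu-quasi}; it cites it from \cite{liu-breuil} (with the tame descent datum handled as in \cite{EGH} and \cite{HM}, Appendix A), so there is no in-paper argument to compare against. That said, your sketch tracks Liu's actual proof closely: one passes through Kisin modules of $E(u)$-height $\leq r$ over $\mathfrak{S}_{\cO_E}$ with descent data, uses Kisin's classification of $G_{(\Qp)_\infty}$-lattices on one side, and Breuil's base-change functor $\mathfrak{M}\mapsto S_{\cO_E}\otimes_{\varphi,\mathfrak{S}_{\cO_E}}\mathfrak{M}$ on the other, and you correctly isolate integral essential surjectivity (lattices stable only under $G_{(\Qp)_\infty}$, not $G_{\Qp}$) as the delicate point that Liu's lattice argument resolves.

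One imprecision worth correcting: the filtration on $\cM := S_{\cO_E}\otimes_{\varphi,\mathfrak{S}_{\cO_E}}\mathfrak{M}$ is not defined as ``the preimage of $E(u)^r\cM[1/p]$ under a canonical inclusion.'' The correct recipe is to form the $S_{\cO_E}$-linear map
$$
1\otimes\varphi_{\mathfrak{M}}:\ S_{\cO_E}\otimes_{\varphi,\mathfrak{S}_{\cO_E}}\mathfrak{M}\ \longrightarrow\ S_{\cO_E}\otimes_{\mathfrak{S}_{\cO_E}}\mathfrak{M}
$$
and set $\Fil^r\cM \defeq (1\otimes\varphi_{\mathfrak{M}})^{-1}\left((\Fil^r S_{\cO_E})\otimes_{\mathfrak{S}_{\cO_E}}\mathfrak{M}\right)$, with $\varphi_r$ induced by $\varphi_{\mathfrak{M}}$ and $\varphi/p^r$ on $\Fil^r S_{\cO_E}$. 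The height $\leq r$ condition makes $1\otimes\varphi_{\mathfrak{M}}$ injective (an isomorphism after inverting $E(u)$), and the hypothesis $r<p-1$ is exactly what guarantees that $\varphi(\Fil^r S_{\cO_E})\subset p^r S_{\cO_E}$ with unit coefficient on $E(u)^r$, so that the resulting quadruple really lies in $\OEModddN[r]$ and the quasi-inverse (the $\varphi$-stable $\mathfrak{S}_{\cO_E}$-lattice generated by $\varphi_r(\Fil^r\cM)$, as in Theorem \ref{liu}) is well-defined. As written, your ``preimage of $E(u)^r\cM[1/p]$'' description does not produce a submodule satisfying the $\Fil^r S_{\cO_E}\cdot\cM\subset\Fil^r\cM$ and $\varphi_r$ axioms, so the construction would not land in the correct category without this adjustment.
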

Here, we wrote $\mathrm{Rep}^{\text{$K$-st}, [-r,0]}_{\Oe}(G_{(\Qp)_{\infty}})$ to denote the category of $G_{(\Qp)_{\infty}}$-stable $\cO_E$-lattices inside $E$-valued, finite dimensional $p$-adic Galois representation of $G_{\Qp}$ becoming {semi-stable} over $K$ and with Hodge--Tate weights in $[-r,0]$.

We will be mainly concerned with the covariant version of the above functors towards Galois representations. 
For this reason we define $\Tst^{r}:\RModdd[r]\rightarrow\mathrm{Rep}_{R}(G_{\Qp})$ and $\Tst^{r}:\RBrModdd[r]\rightarrow\mathrm{Rep}_{\Rbar}(G_{\Qp})$ via \begin{eqnarray*}
\Tst^{r}(\widehat{\cM})\defeq \left(\Tst^{*}(\widehat{\cM})\right)^{\vee}\otimes\varepsilon_p^r,&\qquad&
\Tst^{r}({\cM})\defeq \left(\Tst^*(\cM)\right)^{\vee}\otimes\omega^r
\end{eqnarray*}
respectively (where we write $\bullet^{\vee}$ to denote the usual linear dual).

\section{Ordinary strongly divisible lattices}
The aim of this section is to describe quasi-strongly divisible lattices (with a carefully chosen descent datum) associated to potentially crystalline lift of an ordinary residual Galois representation. We first study the filtration modulo $p$ and then lift to $\cO_E$.  We perform a $p$-adic convergent argument generalizing the technique from \cite[\S 5]{breuil-buzzati} for $\GL_2$ to diagonalize the Frobenius in this setting. The main result is Theorem \ref{phifilr}.

Let $\rhobar : G_{\qp}\rightarrow\gl{3}{\F}$ be a continuous Galois representation. We assume that $\rhobar$ is \emph{ordinary}, of the form
\begin{equation*}
\label{general form Galois representation}
\galorep\sim\maqn{ccc}{\omega^{a_2+2}\mu_{\alpha_2}&\ast&\ast\\
0&\omega^{a_{1}+1}\mu_{\alpha_{1}}&\ast\\
0&0&\omega^{a_0}\mu_{\alpha_0}
}
\end{equation*}
where $\mu_{\alpha_i}$ denotes the unramified character on $\Zp$ verifying $\mu_{\alpha_i}(p)=\alpha_i\in E^{\times}$ and where the exponents $a_i\in\N$ verify
\begin{equation}
\label{strong genericity}
a_1-a_0,\,a_2-a_1>3,\quad\text{and}\,a_2-a_0<p-4
\end{equation}
Provided the conditions (\ref{strong genericity}) we say that $\rhobar$ is \emph{strongly generic}.

It will be convenient to introduce the following notation for the subquotients of $\rhobar$: for $i\in\{0,1\}$ (resp. $j\in\{0,1,2\}$) we define: 
\begin{eqnarray*}
\rhobar_{j}\defeq  \omega^{a_j+j}\mu_{\alpha_{j}},&\qquad&
\rhobar_{i,i+1}\defeq \maq{\omega^{a_{i+1}+i+1}}{\ast}{0}{\omega^{a_i+1}}.
\end{eqnarray*}
With this formalism, we have $\rhobar_{0,2}\defeq\rhobar$.

\subsection{Filtration on ordinary quasi-strongly divisible modules}
\label{sectionfiltration}

We show here that the relative position of the descent data $\tau$ and the inertial weights of $\rhobar$ provide strong constraints on the filtration of strongly divisible lattices lifting $\rhobar$. The main result is Theorem \ref{mainFiltr}, which is the first step in the proof of Theorem \ref{phifilr}.

We keep the notations of the previous section. In particular,  $\rhobar:G_{\Qp}\rightarrow\rGL_3(\F)$ is as in section \ref{notations}. In all what follows, $\cM$ (resp. $\barcM$) denotes a quasi-strongly divisible module (resp. quasi-Breuil module) with descent data such that $\Tqst^{2}(\cM)$ is a lift of $\rhobar\vert_{G_{(\Qp)_{\infty}}}$ (resp. such that $\Tst^2(\barcM)\cong \rhobar\vert_{G_{(\Qp)_{\infty}}}$). We fix once and for all a niveau 1 descent data $\tau$ on $\cM$ (resp. $\barcM$), of the form $\tau\defeq \teich{\omega}^{a_0}\oplus\teich{\omega}^{a_1}\oplus\teich{\omega}^{a_2}$. We refer to $\tau$ as a \emph{principal series type}. 

If $\mathcal{M}$ is such a module, we have a $S_{\mathcal{O}_E}$-basis $\underline{e}=(e_0,e_1,e_2)$ which is compatible with the action of $\Delta$ (the tame descent). The goal of this section is to prove the following result describing the filtration $\mathrm{Fil}^2\mathcal{M}$:

\begin{theo}
\label{mainFiltr}
Let $\Tqst^{2}(\mathcal{M})\defeq \rho$ be a lift of $\rhobar\vert_{G_{(\Qp)_{\infty}}}$, with principal series type $\tau$. Assume that the integers $a_0,a_1,a_2$ verify the strong genericity assumption (\ref{strong genericity}).

There exists an $S_{\mathcal{O}_E}$-basis $(e_0,e_1,e_2)$ for $\mathcal{M}$, compatible the residual Galois action, such that
$$
\mathrm{Fil}^2\mathcal{M}=\langle e_0,\,E(u)e_1,\,E(u)^2e_2\rangle_{\SBr}+\mathrm{Fil}^p\SBr\cdot \mathcal{M}.
$$
\end{theo}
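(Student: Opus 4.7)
My plan is to reduce to the mod $p$ statement for the Breuil module $\barcM = \cM \otimes_{\cO_E} \F$ and then lift. Fix a basis $\underline e = (e_0, e_1, e_2)$ of $\barcM$ and a system of generators $\underline f = (f_0, f_1, f_2)$ of $\Fil^2\barcM$, both compatible with the niveau one principal series descent datum $\tau$. Let $V = \mathrm{Mat}_{\underline e, \underline f} \in M_3(\barS)$ and $A = \mathrm{Mat}_{\underline e, \underline f}(\varphi_r) \in \mathrm{GL}_3(\barS)$. Compatibility with $\tau$ forces each entry $V_{ij}$ and $A_{ij}$ to be a polynomial in $u$ whose nonzero terms $u^k$ satisfy $k \equiv a_j - a_i \pmod{p-1}$; moreover the ordering of the basis can be chosen to match the ordering of the ordinary filtration of $\rhobar$.

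Next, I would exploit the ordinariness of $\rhobar$ via the étale $\varphi$-module formalism. By Lemma \ref{lemma lawdd 1} applied to $\barcM^{\ast}$, the Frobenius matrix of the associated étale $\varphi$-module $\mathfrak{D} = M_{\Fp(\!(\underline{\pi})\!)}(\barcM^{\ast})$, in a framed basis, equals $\widehat V^{t}(\widehat A^{-1})^{t}$. On the Galois side, $\mathfrak{D}$ corresponds, up to duality and twist, to $\rhobar\vert_{G_{(\Qp)_\infty}}$. Since $\rhobar$ is ordinary and upper-triangular, $\mathfrak{D}$ admits a filtration by rank-one sub-$\varphi$-modules, so in a suitable framed basis its Frobenius matrix is triangular with explicit diagonal entries prescribed by the inertial characters of $\rhobar$. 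Equating the two descriptions produces explicit equations on the entries of $V$ and $A$.

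I would then use the strong genericity $a_2 - a_0 < p-4$ and $a_{i+1} - a_i > 3$ to solve these equations. Descent-datum sparsity restricts each $V_{ij}$ (for $i \neq j$) to a small, explicit set of $u$-exponents, and the genericity bounds keep these sets disjoint and bounded below $u^{ep}$. An iterative base change -- rescaling each $e_i$ and $f_i$ by a unit and adding $u$-power multiples of other basis vectors, always in the allowed residue classes -- clears the off-diagonal entries of $V$ and reduces the diagonal to $(1, u^{p-1}, u^{2(p-1)})$ in $\barS \cong \F[u]/(u^{ep})$. Since $E(u) \equiv u^{p-1} \pmod{p}$, this is the mod $p$ analogue of the theorem.

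Finally, I would lift to $\cM$: the filtration is described only modulo $\Fil^p S_{\cO_E} \cdot \cM$, so a successive approximation argument in the $p$-adic topology suffices, lifting the basis one $p$-adic step at a time and at each stage replacing $u^{i(p-1)}$ on the diagonal by $E(u)^{i}$. I expect the main obstacle to be step three, namely the careful bookkeeping of $u$-exponents in $\widehat V^{t}(\widehat A^{-1})^{t}$ against the triangular target shape dictated by ordinariness. The hypothesis $a_2 - a_0 < p-4$ is essential: without it, the long-range entry $V_{02}$ could acquire a $u$-valuation colliding with diagonal contributions and obstruct diagonalization, and the bounds $a_{i+1} - a_i > 3$ play the analogous role for the adjacent entries $V_{01}$ and $V_{12}$.
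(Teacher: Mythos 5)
The mod-$p$ part of your plan takes a genuinely different route from the paper, and it is plausible if carried out carefully. The paper works directly on the quasi-Breuil module side: Proposition \ref{bijection ordrer preserving} gives an order-preserving bijection between quasi-Breuil submodules of $\barcM$ and $G_{(\Qp)_\infty}$-subrepresentations of $\rhobar$, so the ordinary filtration of $\rhobar$ pulls back to a three-step filtration $\barcM_2 \subset \barcM_{2,1} \subset \barcM$ by quasi-Breuil submodules. Lemma \ref{oned} then determines $\Fil^2$ on each rank-one subquotient from the descent data and genericity, and Proposition \ref{corofiltrazione} assembles these and diagonalizes $\Fil^2\barcM$ by an explicit change of basis. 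Your route instead passes to the \'etale $\varphi$-module via Lemma \ref{lemma lawdd 1} and imposes triangularity on $\widehat V^t(\widehat A^{-1})^t$. This can in principle succeed, but the constraints you would need to extract are precisely those on each rank-one graded piece --- i.e.\ the content of Lemma \ref{oned} --- so the detour through the Frobenius matrix on the \'etale side does not save that computation; it merely relocates it.

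The real gap is the lift to $S_{\cO_E}$. ``Successive $p$-adic approximation, replacing $u^{i(p-1)}$ by $E(u)^i$ at each step'' omits the essential input that forces the lift into the stated shape. Knowing that $\Fil^2\barcM$ is diagonal places no a priori constraint on the off-diagonal contributions to $\Fil^2\cM$ modulo $p^2$, $p^3$, and so on, and a naive approximation has nothing to keep these from accumulating. What the paper actually uses is characteristic-zero rigidity: the generic fiber $\mathcal{D} = \cM\otimes_{\Zp}\Qp$ is the filtered module of a potentially crystalline representation of Hodge type $(0,1,2)$, and \cite[Lemma~2.3.9]{HM} gives an explicit $E$-linear description of each $\teich{\omega}^{a_i}$-isotypical component of $\Fil^2\mathcal{D}/\Fil^2 S_E\cdot\mathcal{D}$. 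After $\cO_E$-saturation this description pins down $\bigl(\Fil^2\cM/\Fil^2 S\cdot\cM\bigr)_{\teich{\omega}^{a_i}}$, and only with that in hand does the inductive lift $\tilde e_0, \tilde e_1, \tilde e_2$ of the mod-$p$ basis proceed. The genuine $p$-adic convergence argument in the paper appears later, in Theorem \ref{phifilr} (diagonalizing the Frobenius), and that argument \emph{assumes} the conclusion of Theorem \ref{mainFiltr} as input rather than reproving it.
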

The remainder of this section is devoted to the proof of Theorem \ref{mainFiltr} and the first step is to study the filtration on the associated quasi-Breuil module $\overline{\mathcal{M}} := \mathcal{M} \otimes_{S_{\cO_E}} \overline{S}$. We start from recalling results from \cite{HM} concerning quasi-Breuil modules and their subobjects. Recall that $K_0=\Qp$ in our setting.

\begin{def}
\label{definition:qBreuil submodule}
Let $\barcM$ be an object in $\FBrModddN[r]$. An $\barS$-submodule $\barcN\subseteq\barcM$ is said to be a \emph{quasi-Breuil submodule} if $\barcN$ fulfills the following conditions:
\begin{itemize}
	\item[$i)$] $\barcN$ is an $\barS$-direct summand in $\cM$;
	\item[$ii)$] $\barcN$ is stable under the descent data;
	\item[$iii)$] the Frobenius $\varphi_r$ on $\Fil^r\barcM$ restricts to a $\varphi$-semilinear morphism $\barcN\cap\Fil^r\barcM\rightarrow \barcN$.
\end{itemize}
\end{def}

The relevant properties concerning quasi-Breuil submodules are summarized in the following statements. Their proofs are all contained in \cite{HM}, Appendix A.

\begin{prop}
Let $\cM$ be an object in $\FBrModddN$ and let $\barcN\subseteq\barcM$  be a quasi-Breuil submodule.
Then the $\barS$-modules $\barcN$,  $\barcM/\barcN$ are naturally objects in $\FBrModddN[r]$ and the sequence
$$
0\rightarrow \barcN\rightarrow \barcM\rightarrow \barcM/\barcN\rightarrow 0
$$
is exact in $\FBrModddN[r]$.

Moreover, with the above notion of exact sequence, the category $\FBrModddN[r]$ is an exact category and  $\Tqst^{r}$ is an exact functor. 
\end{prop}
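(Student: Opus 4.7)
The plan is to upgrade the submodule condition of Definition \ref{definition:qBreuil submodule} into full quasi-Breuil structures on $\barcN$ and $\barcM/\barcN$, verify that the resulting three-term sequence satisfies the axioms of an admissible short exact sequence, and finally deduce the exactness of $\Tqst^r$ by passing through the étale $\varphi$-module functor $M_{\Fp(\!(\underline{\pi})\!)}$ of Proposition \ref{proposition comparison}. Since the statement is asserted as being contained in \cite{HM}, Appendix A, the goal of the proof is essentially bookkeeping: identify the right induced structures and check compatibilities.

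First I would endow $\barcN$ with its quasi-Breuil structure. The $\barS$-module $\barcN$ is free because it is a direct summand of the free module $\barcM$ by condition $(i)$. Set $\Fil^r\barcN \defeq \barcN \cap \Fil^r\barcM$, which is automatically an $\barS$-submodule containing $u^{er}\barcN$ (using $u^{er}\barcM \subseteq \Fil^r\barcM$ and the fact that $u^{er}\barcN \subseteq \barcN$). The Frobenius is the restriction of $\varphi_r$, which lands in $\barcN$ by condition $(iii)$; the descent data restricts by condition $(ii)$. The only nontrivial point is the generation requirement: one needs the image of $\barS \otimes_{\varphi,\barS} \Fil^r\barcN \to \barcN$ to be surjective. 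This is where the direct-summand hypothesis is essential, since it lets one split off $\barcN$ inside $\barcM$ and descend the generation property from $\barcM$ to $\barcN$ (alternatively, a basis adapted to the direct summand decomposition of $\barcM$ gives a compatible system of generators of $\Fil^r\barcN$).

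Next I would define the quotient quasi-Breuil structure on $\barcM/\barcN$. Freeness is again immediate from the direct summand property. Take $\Fil^r(\barcM/\barcN)$ to be the image of $\Fil^r\barcM$ under the projection, with induced Frobenius $\varphi_r$; this is well defined precisely because $\varphi_r(\Fil^r\barcM \cap \barcN) \subseteq \barcN$ by $(iii)$. The descent data passes to the quotient by $(ii)$, and the surjectivity of Frobenius and the inclusion $u^{er}(\barcM/\barcN) \subseteq \Fil^r(\barcM/\barcN)$ are inherited from $\barcM$. Exactness of the sequence amounts to the equality $\Fil^r\barcN = \barcN \cap \Fil^r\barcM$ (true by definition) together with $\Fil^r(\barcM/\barcN) = \Fil^r\barcM/\Fil^r\barcN$ as $\barS$-modules, which follows once one observes that $\Fil^r\barcN$ is a direct summand of $\Fil^r\barcM$ (a compatible basis argument using the direct summand decomposition).

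For the exact category structure on $\FBrModddN[r]$, the plan is to apply Keller's axioms: declare the admissible monomorphisms (resp.\ epimorphisms) to be those appearing as the inclusion of (resp.\ projection onto) a quasi-Breuil sub-object (resp.\ quotient). Closure under composition and the pullback/pushout axioms reduce, in each case, to verifying that the intersection (or sum) of quasi-Breuil submodules inside a common ambient $\barcM$ is again a quasi-Breuil submodule in the sense of Definition \ref{definition:qBreuil submodule}; the direct-summand conditions pose the main technical nuisance but follow from the fact that all the modules involved are free of finite rank over $\barS$, which is a local Artinian ring. Finally, to show $\Tqst^r$ is exact, I would use the commutative diagram of Proposition \ref{proposition comparison}: the functor $M_{\Fp(\!(\underline{\pi})\!)}$ is visibly exact on the underlying semi-linear algebra data (the filtration plays no role in the étale $\varphi$-module), and the anti-equivalence with $\mathrm{Rep}_{\Rbar}(G_{(\Qp)_\infty})$ is exact, so exactness of $\Tqst^r$ follows.

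The main obstacle I foresee is the surjectivity/generation check for $\varphi_r$ on $\Fil^r\barcN$ and the verification that $\Fil^r\barcN$ is a direct summand of $\Fil^r\barcM$; both reduce to a careful choice of $\barS$-basis adapted to the splitting $\barcM = \barcN \oplus \barcN'$, which is precisely the sort of argument carried out in \cite[Appendix A]{HM}.
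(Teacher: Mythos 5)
The paper supplies no proof of this proposition: immediately before the statement it says ``The relevant properties concerning quasi-Breuil submodules are summarized in the following statements. Their proofs are all contained in \cite{HM}, Appendix A.'' So there is no in-paper argument to compare your sketch against; the comparison can only be against the plausible structure of the argument in \cite{HM}. Your overall plan --- induce the structures on $\barcN$ and $\barcM/\barcN$, verify the axioms of an exact category, and route through $M_{\Fp(\!(\underline{\pi})\!)}$ and Fontaine's equivalence for the exactness of $\Tqst^r$ --- is the natural one and is presumably close to what is done there. However, two specific places in the sketch conceal real work and are not filled in.

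First, the surjectivity of the linearized Frobenius $\barS\otimes_{\varphi,\barS}\Fil^r\barcN\to\barcN$ does not ``descend'' from the same property on $\barcM$ in the way you suggest. Given a complement $\barcN'$ of $\barcN$ inside $\barcM$, one does \emph{not} in general have $\Fil^r\barcM=(\Fil^r\barcM\cap\barcN)\oplus(\Fil^r\barcM\cap\barcN')$, because $\barcN'$ is merely a free complement and need not be stable under the filtration or Frobenius; projecting the generation statement for $\barcM$ onto $\barcN$ therefore does not immediately give generation of $\barcN$. In the Breuil-module literature this step is handled by a rank-counting argument over an Artinian local coefficient ring or, equivalently, by first passing to the étale side, not by a naive splitting. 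You flag this as the ``main obstacle,'' which is the right instinct, but the sketch leaves the gap open.

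Second, the parenthetical claim that ``the filtration plays no role in the étale $\varphi$-module'' and that $M_{\Fp(\!(\underline{\pi})\!)}$ is therefore ``visibly exact'' is misleading. As Lemma \ref{lemma lawdd 1} makes explicit, the matrix of Frobenius on $M_{\Fp(\!(\underline{\pi})\!)}(\barcM^{\ast})$ is $\widehat{V}^t(\widehat{A}^{-1})^t$, where $V$ is precisely the matrix of $\Fil^r\barcM$. The filtration enters the definition of the functor; it is simply absorbed into the Frobenius of the target. Exactness of $M_{\Fp(\!(\underline{\pi})\!)}$ on a short exact sequence of quasi-Breuil modules therefore requires an argument and is, in substance, the same technical content as the point in the previous paragraph. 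As it stands, the two nontrivial claims in your sketch are deferred to one another rather than settled.
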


From \cite{HM}, Proposition 2.2.4 and 2.2.5, we deduce the following important result: 
\begin{prop}
\label{bijection ordrer preserving}
Let $\barcM\in\FBrModddN[r]$ be a quasi-Breuil module. The functor $\Tqst^{r}$ induces an order preserving bijection:
\begin{eqnarray*}
\Theta:\left\{\text{quasi-Breuil\,submod.\,in\,}\barcM\right\}\stackrel{\sim}{\rightarrow}
\left\{G_{(\Qp)_{\infty}}\text{\,-subrep.\,in\,}\Tqst^{r}(\barcM)\right\}
\end{eqnarray*}
which canonically identifies $\Theta(\barcM)/\Theta(\barcN)$ with $\Tqst^{r}(\barcM/\barcN)$ for any quasi-Breuil submodule $\barcN\subseteq \barcM$.
\end{prop}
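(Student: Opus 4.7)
The plan is to reduce the statement to the anti-equivalence between \'etale $(\varphi,\Rbar\otimes_{\Fp}\Fp(\!(\underline{\pi})\!))$-modules with descent data and $G_{(\Qp)_\infty}$-representations, via the functor $M_{\Fp(\!(\underline{\pi})\!)}$ of Proposition \ref{proposition comparison}. On the \'etale $\varphi$-module side the category is abelian, and the anti-equivalence identifies sub-$\varphi$-modules with quotient Galois representations (equivalently, after dualizing, with subrepresentations).

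First, I would verify that $\Theta$ is well-defined and order-preserving. Given a quasi-Breuil submodule $\barcN\subseteq\barcM$, the short exact sequence $0\to\barcN\to\barcM\to\barcM/\barcN\to 0$ lives in the exact category $\RBrModddN[r]$ by the preceding proposition, and exactness of $\Tqst^r$ then yields a subrepresentation $\Theta(\barcN)\defeq\Tqst^r(\barcN)\subseteq\Tqst^r(\barcM)$ together with the canonical identification $\Theta(\barcM)/\Theta(\barcN)\cong\Tqst^r(\barcM/\barcN)$. Order preservation is automatic from functoriality.

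For injectivity: if $\Theta(\barcN_1)=\Theta(\barcN_2)$, I would check that $\barcN_1\cap\barcN_2$ and $\barcN_1+\barcN_2$ are again quasi-Breuil submodules of $\barcM$ (the direct-summand condition being verifiable after reduction modulo $u$, exploiting $\barS=\Rbar[u]/(u^{ep})$); then faithfulness of $\Tqst^r$ forces $\barcN_1=\barcN_1\cap\barcN_2=\barcN_2$. Surjectivity is the substantive step: given a subrepresentation $V'\subseteq\Tqst^r(\barcM)$, I would transport via the comparison diagram of Proposition \ref{proposition comparison} to a sub-$(\varphi,\Rbar\otimes_{\Fp}\Fp(\!(\underline{\pi})\!))$-module $\mathfrak{D}'$ of $\mathfrak{D}\defeq M_{\Fp(\!(\underline{\pi})\!)}(\barcM^*)$, and then produce the candidate dual quasi-Breuil submodule $\barcN^*$ as the intersection of $\mathfrak{D}'$ with $\barcM^*$ inside the natural $\Rbar[[\underline{\pi}]]$-lattice $\barcM^*\otimes_{\barS}\Rbar[[\underline{\pi}]]$. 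Dualizing then yields the required $\barcN\subseteq\barcM$.

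The main obstacle is showing that the candidate $\barcN^*$ produced this way actually satisfies the three axioms of Definition \ref{definition:qBreuil submodule}: it must be an $\barS$-direct summand of $\barcM^*$, stable under the $\Delta$-action, and with $\varphi_r$ restricting to it appropriately. Following the arguments of \cite{HM}, Appendix A (building on \cite{breuil-normes} and \cite{caruso-liu}), I would work in a framed basis compatible with the descent type and exploit the explicit description of Frobenius from Lemma \ref{lemma lawdd 1}: this reduces the required checks to a finite linear algebra computation over the Artinian local ring $\Rbar[u]/(u^{ep})$, where compatibility of the saturation with all structures can be read off directly from the matrices $V$ and $A$ introduced there.
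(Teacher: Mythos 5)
Your overall strategy---reducing to \'etale $\varphi$-modules via $M_{\Fp(\!(\underline{\pi})\!)}$ and producing quasi-Breuil submodules by saturation inside an $\F[[\underline{\pi}]]$-lattice---is the right one and matches the route taken in the cited reference \cite{HM}, Propositions 2.2.4 and 2.2.5 (the paper itself offers no independent proof but defers entirely to \cite{HM}). However, your injectivity argument has a concrete error: you assert that $\barcN_1\cap\barcN_2$ is again a quasi-Breuil submodule, but direct summands of a free module over the local Artinian ring $\barS=\F[u]/(u^{ep})$ are not closed under intersection. For instance, in $\barcM=\barS^2$ the rank-one direct summands $\barcN_1=\barS\cdot(1,0)$ and $\barcN_2=\barS\cdot(1,u)$ intersect in $u^{ep-1}\barS\cdot(1,0)$, which is not $\barS$-free, hence not a direct summand; nothing in your argument shows that the hypothesis $\Theta(\barcN_1)=\Theta(\barcN_2)$ excludes such pathologies. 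Moreover, even granting that the intersection were a quasi-Breuil submodule, faithfulness of $\Tqst^r$ alone does not make the inclusion $\barcN_1\cap\barcN_2\hookrightarrow\barcN_1$ an isomorphism: in a merely exact (non-abelian) category you need the functor to reflect isomorphisms, a stronger property requiring its own argument.

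For surjectivity, the intersection inside a lattice is the correct device, but the substance of the proposition is precisely the claim that the saturation is $\barS$-free, a direct summand of $\barcM^*$, Frobenius-stable with the correct height bound, and stable under the descent data---and you defer all of this to a ``finite linear algebra computation'' without indicating why it holds. The same saturation in fact supplies the missing injectivity argument: a quasi-Breuil submodule $\barcN$ is recovered from $M_{\Fp(\!(\underline{\pi})\!)}(\barcN)\subseteq M_{\Fp(\!(\underline{\pi})\!)}(\barcM)$ by intersecting with the $\F[[\underline{\pi}]]$-lattice lifting $\barcM^*$, so $\Theta(\barcN_1)=\Theta(\barcN_2)$ yields the same saturation and hence $\barcN_1=\barcN_2$. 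These verifications are carried out in \cite{HM}, Appendix A; without them your proposal does not close either direction of the bijection.
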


By Proposition \ref{bijection ordrer preserving}, for $0\leq i\leq j\leq 2$  there are unique subquotients $\barcM_{i,j}$ of $\barcM$ such that $\Tqst^{2}(\barcM_{i,j})\cong \rhobar_{i,j}\vert_{G_{(\Qp)_{\infty}}}$.

In particular, we have $\Tqst^{2}(\barcM_{i,i}) \cong \omega^{a_i + i} \mu_{\alpha_i}\vert_{G_{(\Qp)_{\infty}}}$ for all $0\leq i\leq 2$.

\begin{lem} 
\label{oned}
Assume that $\rhobar$ is strongly generic $($\ref{strong genericity}$)$.
For $0 \leq i \leq 2$ we have 
$$
\Fil^2 \barcM_{i,i} = u^{ie} \barcM_{i,i}
$$
and $\barcM_{i,i} = (\F[u]/u^{ep}) {e}_i$ where $\Delta$ acts on ${e}_i$ by $\omega^{a_i}$.  
\end{lem}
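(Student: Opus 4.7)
The strategy proceeds in three steps: produce the rank-1 subquotients $\barcM_{i,i}$ from Proposition \ref{bijection ordrer preserving}, classify rank-1 quasi-Breuil modules with tame descent data abstractly, and identify the correct filtration shift via the associated Galois character.

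First, apply Proposition \ref{bijection ordrer preserving} to the ordinary filtration on $\Tqst^2(\barcM) = \rhobar|_{G_{(\Qp)_\infty}}$ to obtain rank-1 quasi-Breuil subquotients $\barcM_{i,i}$ with $\Tqst^2(\barcM_{i,i}) \cong \omega^{a_i+i}\mu_{\alpha_i}|_{G_{(\Qp)_\infty}}$. Each $\barcM_{i,i}$ is free of rank $1$ over $\overline{S} = \F[u]/u^{ep}$ by the direct summand condition of Definition \ref{definition:qBreuil submodule}. The descent data inherited from $\tau$ is tame of niveau one, and the matching of inertial weights (removing the Hodge-Tate twist by $\omega^i$) yields a basis $e_i$ with $\widehat{g}(e_i) = \omega^{a_i}(g) e_i$.

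Second, I classify rank-1 quasi-Breuil modules with descent data $\omega^{a_i}$. Since $\Fil^2\barcM_{i,i}$ is an $\overline{S}$-submodule containing $u^{2e}\barcM_{i,i}$, it equals $u^{s_i}\barcM_{i,i}$ for some $0 \leq s_i \leq 2e$. Writing $\varphi_2(u^{s_i}e_i) = f_i(u)\,e_i$, the compatibility $\widehat{g}\varphi_2 = \varphi_2\widehat{g}$ evaluated at $u^{s_i}e_i$ yields $f_i(\zeta u) = \zeta^{s_i} f_i(u)$ for all $\zeta \in \F^\times$, so $f_i$ is supported on exponents $\equiv s_i \pmod{e}$. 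The requirement that $\varphi_2(\Fil^2\barcM_{i,i})$ generate $\barcM_{i,i}$ over $\overline{S}$ forces $f_i(0) \neq 0$, hence $s_i \equiv 0 \pmod{e}$, so $s_i \in \{0, e, 2e\}$. A change of basis by a unit in $\overline{S}^\Delta = \F[u^e]/u^{ep}$ then normalizes $f_i(u)$ to a nonzero constant $\mu_i \in \F^\times$.

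Finally, I show $s_i = ie$. By Lemma \ref{lemma lawdd 1} applied with $V = u^{s_i}$ and $A = \mu_i$, the étale $\varphi$-module $M_{\F(\!(\underline{\pi})\!)}(\barcM_{i,i}^*)$ has Frobenius $u^{s_i}\mu_i^{-1}$ in a basis $\mathfrak{e}_i$ with descent data $\omega^{-a_i}$. Setting $\mathfrak{e}_i' = \underline{\pi}^{a_i}\mathfrak{e}_i$ gives a $\Delta$-invariant basis; using $\underline{\pi}^e = \underline{p}$ and writing $s_i = j_i e$ with $j_i \in \{0,1,2\}$, the descended Frobenius over $\F(\!(\underline{p})\!)$ is $\underline{p}^{a_i + j_i}\mu_i^{-1}$. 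Under the Fontaine anti-equivalence, the $\underline{p}$-adic valuation of this scalar controls the inertial part of the associated character of $G_{(\Qp)_\infty}$; accounting for the Tate twist relating $\Tqst^*$ to $\Tqst^2$, the inertial weight of $\Tqst^2(\barcM_{i,i})$ equals $a_i + j_i$ modulo $p-1$. Matching with the prescribed inertial weight $a_i + i$ of $\omega^{a_i+i}\mu_{\alpha_i}$, and using strong genericity (\ref{strong genericity}) to ensure the three candidates $a_i, a_i+1, a_i+2$ are pairwise distinct modulo $p-1$, forces $j_i = i$.

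The principal obstacle is this last matching step: carefully tracking conventions (contravariance, dualization, cyclotomic twist) through Proposition \ref{proposition comparison} and the Fontaine correspondence with descent data to translate the filtration shift $s_i$ into the inertial exponent of the associated character. Strong genericity is what guarantees that the three potential values of $s_i$ correspond to pairwise distinct characters, so the identification is unambiguous.
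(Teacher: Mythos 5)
Your overall route is valid and closely parallels the paper's, which simply cites \cite[Lemma~3.3.2]{EGS} for the structure result that you re-derive in step 2. That re-derivation (niveau-one descent forcing $f_i$ supported in degrees $\equiv s_i \pmod e$, then \'etaleness forcing $f_i(0) \neq 0$ and hence $e \mid s_i$, followed by the diagonal change of basis normalizing $f_i$ to a scalar) is correct and a nice self-contained substitute for the cited lemma.

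However, there is a genuine gap in step 1. You assert that the descent datum on $\barcM_{i,i}$ acts by $\omega^{a_i}$ ``by matching inertial weights (removing the Hodge--Tate twist by $\omega^i$),'' but this is circular: the Hodge--Tate shift by $i$ \emph{is} the filtration index $s_i/e = i$ you are trying to prove. A priori, since $\barcM$ has tame descent of type $\tau = \teich\omega^{a_0} \oplus \teich\omega^{a_1} \oplus \teich\omega^{a_2}$, the descent character on a rank-one subquotient is some $\omega^{a_{j'}}$ with $j' \in \{0,1,2\}$, and \emph{which} $j'$ cannot be read off from the Galois side alone -- it is pinned down by the same congruence that determines $s_i$. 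The correct argument carries through step 3 with unknown $j'$: the descended Frobenius becomes $\underline{p}^{a_{j'}+j_i}\mu_i^{-1}$, matching with $\omega^{a_i+i}$ yields $a_i - a_{j'} \equiv j_i - i \pmod{p-1}$, and one must then invoke the \emph{full strength} of strong genericity (\ref{strong genericity}) -- that $3 < |a_{j'} - a_i|$ and $a_2 - a_0 < p-4$ -- to rule out $j' \neq i$, since $|j_i - i| \leq 2$. Your invocation of genericity, to distinguish only $a_i, a_i+1, a_i+2$ modulo $p-1$, is too weak; it handles only the sub-case $j' = i$, which you assumed. This is exactly the content of the paper's congruence $a_i - a_j + i \equiv 0, 1, \text{ or } 2 \pmod{p-1}$ and its rejection unless $i = j$.

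As a minor point, in step 2 the equivariance relation should read $f_i(\zeta u) = \zeta^{s_i}f_i(u)$ for $\zeta \in \Fp^\times$, not $\F^\times$, since $\omega$ takes values in $\Fp^\times$; the conclusion (support in degrees $\equiv s_i \pmod e$) is unaffected because $|\Fp^\times| = e$.
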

\begin{proof}
Specializing \cite[Lemma 3.3.2]{EGS} to our situation, we have that $\Fil^2 \barcM_{i,i} = u^{r} \barcM_{i,i}$, and $\barcM_{i,i} = (\F[u]/u^{ep}) e_i$ where $\Delta$ acts on $e_i$ by the character $\omega^{k}$ with $0 \leq r \leq 2(p-1)$ and $k \equiv p(k + r) \mod p-1$.  Furthermore, since $\rhobar_{i,i}|_{I_{\qp}} = \omega^{a_i + i}$, we have
$$
a_ i  + i \equiv k + p \frac{r}{p-1} \mod p - 1. 
$$
By the first congruence, $p - 1 \mid r$ so $r  \in \{0, p-1, 2(p-1) \}$.  Since $\barcM$ has tame descent given by $\tau$, we know $k \in \{a_j\}_{j = 0, 1, 2}$.  The second congruence becomes
$$
a_i - a_j + i \equiv 0, 1 \text{ or }2 \mod p-1
$$
which is not possible unless $i =j$ by our genericity assumption.      
\end{proof} 

\begin{prop}
\label{corofiltrazione}
Assume $\rhobar\vert_{G_{(\Qp)_{\infty}}}\defeq \Tqst^{2}(\barcM)$ is strongly generic. There exists a basis $(e_0,e_1,e_2)$ on $\barcM$, compatible with the descent data, such that $\Fil^2\barcM$ is described by
\begin{equation*}
\Fil^2\barcM=\langle e_0,\,u^ee_1,\,u^{2e}e_2\rangle_{\barS}.
\end{equation*}

\end{prop}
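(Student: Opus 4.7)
The plan is to transfer the ordinary Galois filtration of $\rhobar|_{G_{(\Qp)_\infty}}=\Tqst^{2}(\barcM)$ to a filtration $0\subset\barcN_0\subset\barcN_{0,1}\subset\barcM$ of $\barcM$ by quasi-Breuil submodules via Proposition \ref{bijection ordrer preserving}, then to lift the rank-one generators furnished by Lemma \ref{oned} on each graded piece to an equivariant $\overline{S}$-basis of $\barcM$, and finally to identify $\Fil^2\barcM$ by a direct application of the Breuil-module axiom $u^{er}\barcM\subseteq\Fil^r\barcM$ combined with the exactness of the filtration in $\FBrModddN$.

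Concretely, applying Lemma \ref{oned} to each $\barcM_{i,i}$ yields a generator $\bar e_i$ on which $\Delta$ acts via $\omega^{a_i}$ and such that $\Fil^2\barcM_{i,i}=u^{ie}\barcM_{i,i}$. This is the only step where strong genericity (\ref{strong genericity}) is invoked: it isolates the exponent $r=ie$ among the three a priori possibilities $0,e,2e$. Since $|\Delta|=p-1$ is coprime to $p$, the $\Delta$-action on $\barcM$ is semisimple, and averaging any set-theoretic lift of $\bar e_i$ against the idempotent $\frac{1}{|\Delta|}\sum_{g\in\Delta}\omega^{-a_i}(g)\widehat g$ produces an $\omega^{a_i}$-equivariant lift. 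Doing this successively gives $e_0\in\barcN_0$ (we may take $e_0=\bar e_0$), $e_1\in\barcN_{0,1}$ lifting $\bar e_1$, and $e_2\in\barcM$ lifting $\bar e_2$, each with the required descent data, and together forming an $\overline{S}$-basis of $\barcM$ by Nakayama.

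For the inclusion $\langle e_0,u^ee_1,u^{2e}e_2\rangle_{\overline{S}}\subseteq\Fil^2\barcM$: the generator $e_0$ lies in $\Fil^2\barcN_0=\barcN_0\subseteq\Fil^2\barcM$; the Breuil axiom (2) applied to $\barcM$ yields $u^{2e}e_2\in u^{2e}\barcM\subseteq\Fil^2\barcM$; and for $u^ee_1$, the exactness of $0\to\barcN_0\to\barcN_{0,1}\to\barcM_{1,1}\to 0$ in $\FBrModddN$ produces some $\eta\in\Fil^2\barcN_{0,1}$ lifting $u^e\bar e_1\in\Fil^2\barcM_{1,1}$, and $\eta-u^ee_1\in\barcN_0\subseteq\Fil^2\barcN_{0,1}$, so $u^ee_1\in\Fil^2\barcN_{0,1}\subseteq\Fil^2\barcM$. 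For the reverse inclusion, any $\xi\in\Fil^2\barcM$ projects to an element $u^{2e}q(u)\bar e_2\in\Fil^2\barcM_{2,2}$, whence $\xi-q(u)u^{2e}e_2\in\Fil^2\barcM\cap\barcN_{0,1}=\Fil^2\barcN_{0,1}$; a second application of the same reasoning inside $\barcN_{0,1}$ writes this residue inside $\langle e_0,u^ee_1\rangle_{\overline{S}}$, completing the argument.

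I do not anticipate a genuine obstacle here: the entire argument is formal, combining the exactness of $\FBrModddN$, the Breuil axiom $u^{er}\barcM\subseteq\Fil^r\barcM$, and Lemma \ref{oned}. The only delicate input, the strong genericity hypothesis, has already been used, and used only, to pin down the exponent in Lemma \ref{oned}; everything else is bookkeeping between the filtration exactness on subquotients and the trivial inclusion $u^{2e}\barcM\subseteq\Fil^2\barcM$.
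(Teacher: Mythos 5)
There is a genuine error in the indexing that invalidates the core steps. Since $\Tqst^2$ is covariant and order-preserving (Proposition \ref{bijection ordrer preserving}), the \emph{smallest} quasi-Breuil submodule of $\barcM$ corresponds to the \emph{one-dimensional subrepresentation} of $\rhobar|_{G_{(\Qp)_\infty}}$. Because $\rhobar$ is upper triangular with $\omega^{a_2+2}$, $\omega^{a_1+1}$, $\omega^{a_0}$ down the diagonal, that subrepresentation is $\rhobar_2 = \omega^{a_2+2}\mu_{\alpha_2}$ --- the one with the \emph{highest} Hodge--Tate weight --- and so your $\barcN_0$ is the paper's $\barcM_{2,2}$. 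By Lemma \ref{oned}, its generator carries descent datum $\omega^{a_2}$ and $\Fil^2\barcN_0 = u^{2e}\barcN_0 \subsetneq \barcN_0$. You have this backwards: you identify $\barcN_0$ with $\barcM_{0,0}$ (generated by $\bar e_0$, with $\Fil^2\barcM_{0,0}=\barcM_{0,0}$), but $\barcM_{0,0}$ is the top \emph{quotient}, not a submodule. Consequently your claims ``$e_0\in\Fil^2\barcN_0=\barcN_0$'' and ``$\eta-u^ee_1\in\barcN_0\subseteq\Fil^2\barcN_{0,1}$'' are false: $\barcN_0\cap\Fil^2\barcN_{0,1}=\Fil^2\barcN_0=u^{2e}\barcN_0$, strictly smaller than $\barcN_0$, so $\barcN_0\not\subseteq\Fil^2\barcN_{0,1}$.

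Even after reversing the indices the argument has a second gap. A lift $\eta\in\Fil^2\barcN_{0,1}$ of the class $u^e\bar e_1$ has the form $\eta = u^e e_1 + s(u)\,e_2$ with $s(u)\in u^{[a_1-a_2]}\barS_0$, and nothing in your argument forces $s(u)e_2\in\Fil^2\barcM$; one needs to prove $u^e\mid s(u)$, which is precisely the paper's observation that $y_0=0$ (obtained from $u^{2e}\barcM\subseteq\Fil^2\barcM$, i.e., $u^{2e}e_1\in\Fil^2\barcM$), followed by the explicit change of basis $e_1\mapsto e_1 + y\,u^{e-(a_2-a_1)}e_2$ (and likewise $e_0\mapsto f_0$). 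This is not ``bookkeeping'': without the divisibility argument and the subsequent renormalization of the basis, the filtration one produces still has cross-terms and is not in the diagonal form $\langle e_0,\,u^ee_1,\,u^{2e}e_2\rangle_{\barS}$. The paper's proof does all of this explicitly by writing out the $3\times 3$ matrix of $\Fil^2\barcM$ in a framed basis, using genericity and the Breuil axiom to kill the obstructing coefficients, and then changing basis.
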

\begin{proof}
We fix a basis $\underline{e}=(e_0,e_1,e_2)$ on $\barcM$, compatible with the descent data. We use the same notation for the image of the elements $e_i$ in the various subquotients of $\barcM$, this will cause no confusion.  By Lemma \ref{oned}, $\barcM_i$ is generated by $e_i$ and $\Fil^2 \barcM_i = u^{ie} \barcM_i$ for all $i$. 

From the exact sequences $0\rightarrow \Fil^2\barcM_{i+1}\rightarrow\Fil^2\barcM_{i,i+1}\rightarrow\Fil^2\barcM_i\rightarrow0$ and Lemma \ref{oned} it follows that $\Fil^{2}\barcM$  admits a system of generators $\underline{f}=(f_0,f_1,f_2)$, compatible with the descent data, such that
\begin{equation*}
\mathrm{Mat}_{\underline{e},\underline{f}}\left(\Fil^2\barcM\right)=
\maqn{ccc}{1&0&0\\(x_0 + x_1 u^e) u^{e-(a_1-a_0)}&u^e&0\\ (z_0+z_1u^e)u^{e-(a_2-a_0)}& (y_0+y_1u^e)u^{e-(a_2-a_1)}&u^{2e}}
\end{equation*}
where $x_i, y_i,z_i\in \F$. We can assume that $x_1=0$ and write $x=x_0$ in what follows. Indeed, as $e-(a_1-a_0)>0$, Nakayama's lemma shows that $(f_0-x_1u^{e-(a_1-a_0)}f_1, f_1, f_2)$ is still a system of generators for $\Fil^2\barcM$, compatible with the descent data.  We also note that $y_0 = 0$ since $\Fil^2 \barcM \supset u^{2e} \barcM$ (in particular, $u^{2e}e_1\in \Fil^2\barcM$).


We finally deduce that the $\barS$-module $\Fil^2\barcM$ is generated by the following elements (described in their coordinates with respect to the basis $(e_0, e_1,e_2)$):
\begin{equation*}
f_0\defeq \vectln{lll}{1\\xu^{e-(a_1-a_0)}\\ (z_0+z_1u^e) u^{e-(a_2-a_0)}},\qquad
f_1\defeq u^e\vectln{lll}{0\\1\\yu^{e-(a_2-a_1)}},\qquad f_2\defeq u^{2e}\vectln{lll}{0\\0\\1}.
\end{equation*}
By letting $e_0'\defeq f_0$, $e_1'\defeq e_1+yu^{e-(a_2-a_1)} e_2$, we immediately see that $(e_0',e_1',e_2)$ is a basis on $\barcM$ with the required properties.
\end{proof}

Thanks to Proposition \ref{corofiltrazione}, we are able to describe the filtration $\mathrm{Fil}^r$ on the quasi-strongly divisible  module $\mathcal{M}$.
\vspace{2mm}

\paragraph{\textbf{Proof of Theorem \ref{mainFiltr}.}}

Let $(\tilde{m}_0,\tilde{m}_1,\tilde{m}_2)$ be an $S\defeq \SBr$ basis for $\mathcal{M}$, compatible with the descent data. Let $(e_0, e_1, e_2)$ denote a basis for $\barcM$ satisfying the condition from Proposition \ref{corofiltrazione}.

Since 
$$
\left(\mathrm{Fil}^2\mathcal{M}/\mathrm{Fil}^2S\cdot \mathcal{M}\right)_{\teich{\omega}^{a_0}}
\twoheadrightarrow \left(\mathrm{Fil}^2\barcM/\Fil^2 S\cdot \barcM\right)_{\teich{\omega}^{a_0}}=
\langle e_0, u^{e-(a_1-a_0)}u^ee_1\rangle_{\barS_{\omega^0}}+u^{2e}\cdot \left(\mathcal{M}\right)_{\teich{\omega}^{a_0}}
$$
we have a lift $\tilde{e}_0\in\mathrm{Fil}^2\mathcal{M}$ of $e_0$. Notice that, by Nakayama's lemma, the family $(\tilde{e}_0,\tilde{m}_1,\tilde{m}_2)$ is again a basis for $\mathcal{M}$, compatible with the descent data.

Let $\mathcal{D}\defeq \cM\otimes_{\Zp}\Qp$.
By virtue of \cite{HM} Lemma 2.3.9, the $\teich{\omega}^{a_0}$-isotypical component  $\left(\mathrm{Fil}^2\mathcal{D}/\mathrm{Fil}^2S_E\cdot \mathcal{D}\right)_{\teich{\omega}^{a_0}}$ is described as follows in terms of coordinates with respect to the basis $(\tilde{e}_0,\tilde{m}_1,\tilde{m}_2)$:
$$
\big(\mathrm{Fil}^2\mathcal{D}/\mathrm{Fil}^2S_E\cdot\mathcal{D}\big)_{\teich{\omega}^{a_0}}=\left\langle
\vectln{lll}{1\\ 0\\ 0}, E(u)\vectln{lll}{1\\0\\0}, 
E(u)\vectln{lll}{0\\b_1'u^{e-(a_1-a_0)}\\b_2'u^{e-(a_2-a_0)}}\right\rangle_{E}
$$
for some $(b_1',b_2')\in\mathbf{P}^1_{E}(E)$. 

The $\mathcal{O}_E$-saturation of the latter space is now easy to determine, and we get
$$
\big(\mathrm{Fil}^2\mathcal{M}/\mathrm{Fil}^2\SBr\cdot\mathcal{M}\big)_{\teich{\omega}^{a_0}}=\left\langle
\vectln{lll}{1\\ 0\\ 0}, E(u)\vectln{lll}{1\\0\\0}, 
E(u)\vectln{lll}{0\\b_1 u^{e-(a_1-a_0)}\\b_2 u^{e-(a_2-a_0)}}\right\rangle_{\cO_E}
$$
for some $(b_1,b_2)\in\mathbf{P}^1_{\mathcal{O}_E}(\mathcal{O}_E)$.

Moreover, the $\teich{\omega}^{a_1}$-isotypical component of $\mathrm{Fil}^2\mathcal{D}/\mathrm{Fil}^2S_E\cdot\mathcal{D}$ is described by
$$
\left(\mathrm{Fil}^2\mathcal{D}/\mathrm{Fil}^2S_E\cdot\mathcal{D}\right)_{\teich{\omega}^{a_1}}=\left\langle
\vectln{lll}{u^{a_1-a_0}\\ 0\\ 0}, E(u)\vectln{lll}{u^{a_1-a_0}\\0\\0}, 
E(u)\vectln{lll}{0\\c_1'\\c_2'u^{e-(a_2-a_1)}}\right\rangle_{E}
$$
for some $(c_1',c_2')\in\mathbf{P}^1_{E}(E)$, hence
$$
\left(\mathrm{Fil}^2\mathcal{M}/\mathrm{Fil}^2\SBr\cdot\mathcal{M}\right)_{\teich{\omega}^{a_1}}=\left\langle
\vectln{lll}{u^{a_1-a_0}\\ 0\\ 0}, E(u)\vectln{lll}{u^{a_1-a_0}\\0\\0}, 
E(u)\vectln{lll}{0\\c_1 \\c_2 u^{e-(a_2-a_1)}}\right\rangle_{\mathcal{O}_E}
$$
for some $(c_1,c_2)\in\mathbf{P}^1_{\mathcal{O}_E}(\mathcal{O}_E)$.

In particular, we have 
$$
\left(\mathrm{Fil}^2\mathcal{M}/\Fil^2S\cdot \cM\right)_{\teich{\omega}^{a_1}}\subseteq\left\langle \vectln{lll}{u^{a_1-a_0}\\ 0\\ 0}\right\rangle_{\mathcal{O}_E}+E(u)\cdot \big(\mathcal{M}\big)_{\teich{\omega}^{a_1}},
$$
and since 
$$
\left(\mathrm{Fil}^2\mathcal{M}/\mathrm{Fil}^2S\cdot \mathcal{M}\right)_{\teich{\omega}^{a_1}}
\twoheadrightarrow \big(\mathrm{Fil}^2\overline{\mathcal{M}}/\Fil^2\barS\cdot\barcM\big)_{\teich{\omega}^{a_1}}=
\langle u^{a_1-a_0}e_0, u^ee_1\rangle_{\barS_{\teich{\omega}^0}}+u^{2e}\left(\mathcal{M}\right)_{\teich{\omega}^{a_1}},
$$
a lift $f_1\in\big(\mathrm{Fil}^2\mathcal{M}\big)_{\teich{\omega}^{a_1}}$ 
of a generator of $\big(\mathrm{Fil}^2\overline{\mathcal{M}}/\Fil^2\barS\cdot\barcM\big)_{\teich{\omega}^{a_1}}$
has the form $f_1= \lambda_0u^{a_1-a_0}\tilde{e}_0+E(u)\tilde{e}_1$ for some 
$\lambda_0\in\mathcal{O}_E$ and some element $\tilde{e}_1\in \big(\mathcal{M}\big)_{\teich{\omega}^{a_1}}$.

We deduce that $E(u)\tilde{e}_1\in \big(\mathrm{Fil}^2\mathcal{M}\big)_{\teich{\omega}^{a_1}}$, as well as $E(u)\tilde{e}_1\equiv E(u)e_1$ modulo $(\varpi_E,\mathrm{Fil}^2\SBr)$. Hence, $\widetilde{e}_1\equiv e_1$ modulo $(\varpi_E, u^{e(p-1)})$.
It follows that $(\tilde{e}_0,\tilde{e}_1,\tilde{m}_2)$ is again a basis for $\mathcal{M}$ (compatible with the descent data). In terms of coordinates with respect the basis $(\tilde{e}_0,\tilde{e}_1,\tilde{m}_2)$, we now have
$$
\big(\mathrm{Fil}^2\mathcal{M}/\mathrm{Fil}^2\SBr\cdot\mathcal{M}\big)_{\teich{\omega}^{a_0}}=\left\langle
\vectln{lll}{u^{[a_0-a_1]}\\ 0\\ 0}, E(u)\vectln{lll}{u^{[a_0-a_1]}\\0\\0}, 
E(u)\vectln{lll}{0\\1 \\0}\right\rangle_{\mathcal{O}_E}.
$$

If we now let $\tilde{e}_2\defeq \tilde{m}_2$ it is elementary to conclude that, in the basis $(\tilde{e}_0,\tilde{e}_1,\tilde{e}_2)$, we have
$$
\mathrm{Fil}^2\mathcal{M}=\langle E(u)^j \widetilde{e}_j,\quad j\in\{0,1,2\}\rangle_{\SBr}+\mathrm{Fil}^p\SBr\cdot \mathcal{M}.
$$
as claimed.

\subsection{Diagonalization of the Frobenius action}
\label{sectFrob}

The aim of this section is to provide quasi-strongly divisible modules $\cM$ lifting $\overline{\cM}$ with a gauge basis (i.e., a basis for the filtration on which the Frobenius is diagonal).  We keep the notations from previous sections. In particular, we let $\mathcal{M}$ denote a quasi-strongly divisible modules over $S_{\mathcal{O}_E}$ with principal series descent data $\tau$ and such that $\Tqst^{2}\left(\cM\right)\otimes_{\cO_E}\F\cong \rhobar\vert_{G_{(\Qp)_{\infty}}}$. Let us fix a framing on $\tau$, i.e. a basis of eigenvector on the underlying vector space of $\tau$.

From \cite{HM}, Lemma 2.2.7 (and the fact that $\SBr$ is a local ring) we deduce the existence of a basis $\underline{e}\defeq (e_0,e_1,e_2)$ for $\cM$ and a system of generators $\underline{f}\defeq (f_0,f_1,f_2)$ for $\Fil^2\cM$ modulo $\Fil^p \SBr \cM$
such that
\begin{equation*}
\widehat{g}(e_i)=\teich{\omega}^{a_i}(g)e_i,\qquad 
\widehat{g}(f_i)=\teich{\omega}^{a_i}(g)f_i
\end{equation*}
for all $i\in\{0,1,2\}$. We say that $\underline{e},\,\underline{f}$ are \emph{compatible} with the descent datum $\tau$.

The main result of this section is a complete description of $\Fil^2\cM/(\Fil^p \SBr\cdot \cM)$ and the Frobenius action $\varphi_r$ on $\cM$ in terms of a framed basis $\underline{e}$, and generators $\underline{f}$.

\begin{theo}
\label{phifilr}
Let $\cM$ be a quasi-strongly divisible lattice with tame descent data of type $\tau\cong \teich{\omega}^{a_0}\oplus\teich{\omega}^{a_1}\oplus\teich{\omega}^{a_2}$ where $a_0, a_1, a_2$ are strongly generic $($\ref{strong genericity}$)$. Fix a framing on $\tau$, a framed basis $\un{e}=(e_0,e_1,e_2)$ on $\cM$ and assume that $\Fil^2\cM/(\Fil^p \SBr\cdot \cM)$ is generated by $(e_0,E(u)e_1,E(u)^2e_2)$.

Then there exists a basis $\underline{e}^{(\infty)}\defeq (e_0^{(\infty)},e_1^{(\infty)},e_{2}^{(\infty)})$ for $\mathcal{M}$, and a system of generators 
$\underline{f}^{(\infty)}\defeq (f_0^{(\infty)},f_1^{(\infty)},f_{2}^{(\infty)})$ for $\Fil^2\cM/(\Fil^p \SBr\cdot \cM)$ compatible with the framing on $\tau$ and such that:
\begin{eqnarray*}
\Mat_{\un{e}^{(\infty)}}([f_0^{(\infty)},\,f_1^{(\infty)},\,f_{2}^{(\infty)}])&=&\small\begin{pmatrix}
1&0&0
\\ 
u^{e-(a_1-a_0)}v_{1,0}^{(\infty)}&E(u)&0
\\ 
u^{e-(a_2-a_0)} v^{(\infty)}_{2,0}&u^{e-(a_2-a_1)} E(u)v^{(\infty)}_{2,1}&E(u)^2
\end{pmatrix}\\
\Mat_{\un{e}^{(\infty)},\un{f}^{(\infty)}}(\phz_2)&=&\begin{pmatrix}\lambda_0&0&0\\0&\lambda_1&0\\0&0&\lambda_2\end{pmatrix}
\end{eqnarray*}
where $\lambda_i\in \cO_E^{\times}$ and $v_{1,0}^{(\infty)},\, v_{2,1}^{(\infty)}\in \cO_E$, $v_{2,0}^{(\infty)}\in \cO_E\oplus E(u)\cO_E$.
\end{theo}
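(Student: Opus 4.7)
The plan is to construct $\underline{e}^{(\infty)}$ and $\underline{f}^{(\infty)}$ as $\varpi_E$-adic limits of a sequence of approximations $(\underline{e}^{(n)}, \underline{f}^{(n)})_{n \geq 0}$, following the strategy of \cite[\S 5]{breuil-buzzati} for $\GL_2$. The induction hypothesis at stage $n$ will be that $\underline{e}^{(n)}$ is a framed basis of $\cM$, $\underline{f}^{(n)}$ is a framed system of generators of $\Fil^2 \cM/(\Fil^p \SBr \cdot \cM)$, the matrix $\Mat_{\underline{e}^{(n)}}(\underline{f}^{(n)})$ has the prescribed lower-triangular shape with coefficients $v_{i,j}^{(n)}$ in the submodules of the statement, and the associated Frobenius matrix $A^{(n)} = \Mat_{\underline{e}^{(n)}, \underline{f}^{(n)}}(\varphi_2)$ is diagonal modulo $\varpi_E^{n+1}$, with diagonal entries $\lambda_i^{(n)} \in \cO_E^{\times}$. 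The $\varpi_E$-adic completeness of $\cM$ and of $\Fil^2\cM/(\Fil^p\SBr\cdot\cM)$ will then yield the limit objects satisfying the conclusion.

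For the base case, Theorem \ref{mainFiltr} combined with a Nakayama argument as in the proof of Proposition \ref{corofiltrazione} produces $\underline{e}^{(0)}$ and a compatible system of generators $\underline{f}^{(0)}$ of the required triangular shape modulo $\varpi_E$. Descent-data compatibility forces the $(i,j)$-entry of $A^{(0)}$ to live in the $\teich{\omega}^{a_j-a_i}$-isotypical component of $\SBr$; combined with the strong genericity assumption (\ref{strong genericity}), these isotypical components are concentrated in $u$-monomials with strictly positive exponents, so the off-diagonal entries vanish modulo $u$ and the surjectivity condition on $\varphi_2(\Fil^2\cM)$ forces the diagonal entries $\lambda_i^{(0)}$ to be units in $\cO_E$.

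The inductive step consists in modifying $\underline{e}^{(n)}$ via $\underline{e}^{(n+1)} = \underline{e}^{(n)}(I + \varpi_E^{n+1} C_n)$, with $C_n \in M_3(\SBr)$ supported in the prescribed descent-data eigencomponents, together with a simultaneous readjustment of $\underline{f}^{(n)}$ chosen to preserve the prescribed triangular shape of $\Mat_{\underline{e}^{(n+1)}}(\underline{f}^{(n+1)})$ (in particular to keep the normalization $f_2^{(n+1)} = E(u)^2 e_2^{(n+1)}$ and the vanishing of the $e_0$-component of $f_1^{(n+1)}$). The entries of $C_n$ are chosen so as to kill the $\varpi_E^{n+1}$-part of the off-diagonal entries of $A^{(n)}$; the resulting linear system is solvable because the $\lambda_i^{(n)}$ are units and because the strong genericity hypothesis (\ref{strong genericity}) ensures that the relevant $u$-exponents appearing in each eigencomponent lie strictly between $0$ and $ep$, so that every off-diagonal correction can be absorbed without colliding with the filtration constraints.

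The main obstacle is precisely the simultaneous preservation of the triangular shape of the filtration matrix and the diagonalization of the Frobenius: adjusting $\underline{e}^{(n)}$ to cancel an off-diagonal Frobenius entry may reintroduce forbidden terms in $\Mat_{\underline{e}^{(n+1)}}(\underline{f}^{(n+1)})$, and one must verify that the degrees of freedom available in the corrections of $\underline{f}^{(n)}$ suffice to remove them. This is ultimately the reason that $v_{2,0}^{(\infty)}$ is only constrained to lie in $\cO_E \oplus E(u)\cO_E$, while $v_{1,0}^{(\infty)}, v_{2,1}^{(\infty)} \in \cO_E$: a residual $E(u)$-term in the $(3,1)$-slot of the filtration matrix cannot be absorbed without destroying the normalization $f_2^{(\infty)} = E(u)^2 e_2^{(\infty)}$ or the pure-$\cO_E$ form of the $(2,1)$ and $(3,2)$ slots. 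Once the compatibility of the corrections is checked, the $\varpi$-divisibility of the successive adjustments is automatic and the $\varpi_E$-adic limit $(\underline{e}^{(\infty)}, \underline{f}^{(\infty)})$ satisfies the conclusion of the theorem.
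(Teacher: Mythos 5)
The general strategy you describe — successive approximation modeled on \cite[\S 5]{breuil-buzzati}, alternating corrections of $\un{e}$ and $\un{f}$ while preserving the triangular shape of the filtration matrix — is indeed the paper's strategy. However, the engine driving the convergence in your write-up is wrong, and the argument as stated would not close.

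The core issue is that your induction parameter and your correction mechanism do not match. You propose to make $A^{(n)}$ diagonal modulo $\varpi_E^{n+1}$ by modifying $\un{e}^{(n)}$ via $I + \varpi_E^{n+1} C_n$ and "killing the $\varpi_E^{n+1}$-part of the off-diagonal." But under a change of basis $\un{e}' = \un{e}\cdot P$, $\un{f}'=\un{f}\cdot Q$, the new Frobenius matrix is $A' = P^{-1}A\,\phz(Q)$, with $\phz$ the semilinear Frobenius on $\SBr$. Because $\phz$ does not preserve the $u$-adic or $\Fil^{\bullet}$-grading in any simple way, there is no reason that a free $\varpi_E^{n+1}$-small correction can be chosen to both (a) cancel the off-diagonal of $A'$ to order $\varpi_E^{n+2}$ and (b) keep $P^{-1}VQ$ in the prescribed triangular form. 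Your claim that "the resulting linear system is solvable because the $\lambda_i^{(n)}$ are units and the genericity controls the $u$-exponents" is precisely what needs a proof, and the paper shows that this cannot be done by a one-step $\varpi_E$-adic correction: the actual gain happens over a \emph{two-step cycle} (Propositions \ref{even new} and \ref{odd new}) and is driven not by choosing corrections of increasing $\varpi_E$-valuation but by the Frobenius contraction property recorded in Lemma \ref{lemphir new new}: $\phz(E(u))\in p\sR^{\times}$ and more generally $\phz$ carries the ideals $p^n(p,\Fil^1\sR)$ into $p^{n+1}\sR$. The paper exploits this by taking the \emph{specific} change of basis $\un{e}' = \un{e}\cdot A$ with $A = \Mat_{\un{e},\un{f}}(\phz_2)$ (Lemma \ref{lemmaprimo new}), so that the new Frobenius matrix is $\phz(B)$ for a solvability-controlled $B$ (Lemma \ref{lemmasecondo new}); the two auxiliary ideals $\cI = p\Fil^1\sR$ and $\cJ = (p\Fil^2\sR,\Fil^3\sR)$ keep track of exactly how much $p$-divisibility each application of $\phz$ buys. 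This mechanism is entirely absent from your sketch, and without it the iteration does not improve anything.

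A secondary but real gap is in your base case: descent-data compatibility and genericity place the diagonal entries of $A^{(0)}$ in $\left(\SBr\right)_{\teich{\omega}^0}^{\times}=\sR^{\times}$, not in $\cO_E^{\times}$. Forcing the diagonal to be genuinely scalar is one of the things the iteration must achieve (Lemma \ref{lemphir new new} encodes that each round pushes the non-constant part of the diagonal one step deeper into $(p,\Fil^1\sR)$-ideals), so assuming $\lambda_i^{(0)}\in\cO_E^{\times}$ at the start is not available. Likewise your heuristic explanation for why $v_{2,0}^{(\infty)}$ lives in $\cO_E\oplus E(u)\cO_E$ while the other two slots are in $\cO_E$ is plausible-sounding but not what the proof actually does; in the paper this comes from the shape of the adjugate matrix and the $2\times 2$ linear system in Propositions \ref{even new}/\ref{odd new} (see Lemma \ref{lem3.6 new} and display (\ref{inverse M0 new})), not from a constraint of the kind you invoke. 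To repair the proposal you would need to replace the free $I+\varpi_E^{n+1}C_n$ corrections with the paper's Frobenius-driven change of basis, introduce the ideals $\cI,\cJ$ to track the $p$-divisibility, and run the even/odd two-step cycle.
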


The proof of theorem \ref{phifilr} relies on a delicate $p$-adic convergence argument and occupies the remainder of this section. 

\begin{rmk}
\begin{enumerate}
\item The statement and proof of Theorem \ref{phifilr} generalizes, \emph{mutatis mutandis}, to an $n$ dimensional ordinary representation. Because of the technicality of the computations in the $\mathbf{GL}_n$ case, and for sake of readability, we focus only on the case $n=3$ $($so $r = 2)$.

\item Similarly, Theorem \ref{phifilr} is stated for strongly divisible lattices with $\cO_E$-coefficient, but the statement and the proof generalizes line to line when $\SBr$ is replaced by $S_R$, where $R$ is a local, complete noetherian $\cO_E$-algebra which is $p$-flat (cf. with \cite{breuil-buzzati} Proposition 5.4 and \cite{EGS}, beginning \S7.4, where the similar situation in the $\rGL_2$ case is discussed).
\end{enumerate}
\end{rmk}

\subsubsection{Linear algebra with coefficients}
\label{LAC new}

The goal of this section is to develop and collect results of linear algebra with coefficients over a certain family of closed subrings of $\SBr$. It is the natural generalization of \cite{HM}, \S 2.2.3 in characteristic zero.

In all what follows, we take $r = 2$ though the results can be generalized to $r < p-1$. Define
$$
\mathscr{R}\defeq \left(\SBr\right)_{\teich{\omega}^0}=\widehat{\bigoplus}_{i\geq 0}\mathcal{O}_E\cdot\frac{u^{ie}}{i!}.
$$
We have a natural filtration $\Fil^i\sR\defeq \Fil^i\SBr\cap \sR$ on $\sR$ and we note that $\sR$ is stable under the Frobenius on $\SBr$.
As $\sR$ is complete and separated for the $p$-adic topology we have
$$
\bigcap_{n\in\N}\left(\cO_E+p^{n+1}\sR\right)=\mathcal{O}_E.
$$


The following closed ideals will be important for the $p$-adic convergence argument.
\begin{eqnarray*}
\cJ \defeq \left( p\Fil^2\sR, \Fil^{3}\sR\right)=\Fil^2\sR\cdot (p,\Fil^1\sR),&\qquad& \cI\defeq p\Fil^1\sR.
\end{eqnarray*}
We collect some important lemmas on the nature of the filtration on $\sR$.
\begin{lem}
\label{lemcalcul1 new} 
Let $p>n\geq 1$ and $i\geq n$ be integers.
We have the following relation in the Breuil ring $\sR$:
$$
\frac{u^{ie}}{i!}\in \left(\bigoplus_{k=0}^{n-1}p^{n-k}\mathcal{O}_E E(u)^{k}\right)+ E(u)^n\cdot \left(\widehat{\bigoplus}_{i\geq 0}\mathcal{O}_E\cdot E(u)^i\right).
$$

In particular
for $3\leq i\leq p-1$ we have
$$
\phz_2(u^i)\in u^i\cdot\left(p^i+p\cI+\cJ\right)
$$
and, more generally 
$$
\sR=\left(\cO_E\oplus\cO_EE(u)\oplus\cO_E E(u)^2\right)+\cI+\cJ.
$$
\end{lem}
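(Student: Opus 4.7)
The plan is to expand $u^{ie} = (E(u) - p)^i$ by the binomial theorem and divide by $i!$, obtaining the explicit identity
$$
\frac{u^{ie}}{i!} \;=\; \sum_{j=0}^{i} \frac{(-p)^{i-j}}{j!\,(i-j)!}\, E(u)^j \quad \text{in } \sR,
$$
which is just the divided-power convolution $\gamma_i(u^e) = \gamma_i(E(u) - p) = \sum_{j=0}^i \gamma_j(E(u))\,\gamma_{i-j}(-p)$ rewritten in ordinary powers of $E(u)$. I would then split this sum at $j = n-1$ and bound the $p$-adic valuation of each coefficient via Legendre's formula $v_p(m!) = \frac{m - s_p(m)}{p-1}$, so as to distribute the two halves into the two summands on the right-hand side of the claimed inclusion.

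For $0 \leq j \leq n-1$: since $n < p$, $j! \in \cO_E^{\times}$, so the $p$-valuation of the coefficient reduces to $(i-j) - v_p((i-j)!)$; combining the hypothesis $i \geq n$ with $s_p(n-j) = n-j$ (valid because $n-j < p$) yields $v_p((i-j)!) \leq i - n$, placing this coefficient in $p^{n-j}\cO_E$ as required. For $n \leq j \leq i$: the coefficient equals $\binom{i}{j} \cdot \frac{(-p)^{i-j}}{i!}$ and belongs to $\cO_E$ by the standard divided-power integrality $v_p(p^m/m!) \geq 0$, placing this half of the sum in $E(u)^n \cdot \widehat{\bigoplus}_{k \geq 0} \cO_E E(u)^k$.

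The two \emph{in particular} consequences follow by specialization. For the identity $\sR = (\cO_E \oplus \cO_E E(u) \oplus \cO_E E(u)^2) + \cI + \cJ$, apply the main assertion with $n = 3$ to each divided-power generator $\frac{u^{ie}}{i!}$ of $\sR$ with $i \geq 3$: the low-degree part $p^3\cO_E + p^2\cO_E E(u) + p\cO_E E(u)^2$ lies in $\cO_E + p\cI + \cJ$ (noting $p^2\cO_E E(u) \subseteq p^2 \Fil^1 \sR = p\cI$ and $p\cO_E E(u)^2 \subseteq p\Fil^2\sR \subseteq \cJ$), while the high-degree part lies in $E(u)^3 \cdot \widehat{\bigoplus} \cO_E E(u)^k \subseteq \Fil^3 \sR \subseteq \cJ$; the cases $i \in \{0,1,2\}$ are immediate. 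For the Frobenius statement (reading $\phz_2(u^i)$ as $u^{ip} = u^i \cdot u^{ie}$), a direct binomial expansion of $u^{ie} = (E(u)-p)^i$ does the job: the $j=0$ term gives $\pm p^i$, for $j=1$ the hypothesis $i \geq 3$ places $\pm i p^{i-1} E(u)$ inside $p^2 \Fil^1 \sR = p\cI$, for $j = 2$ the term lies in $p^{i-2}\Fil^2\sR \subseteq p\Fil^2\sR \subseteq \cJ$, and for $j \geq 3$ the term lies in $\Fil^3\sR \subseteq \cJ$.

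The main subtlety I anticipate is the uniform-in-$i$ valuation estimate $v_p((i-j)!) \leq i - n$ for $0 \leq j < n$: the crude bound $v_p(m!) \leq m/(p-1)$ is insufficient for large $i$, and the sharp form of Legendre's formula is needed. This is precisely where the constraint $p > n$ enters essentially, via the identity $s_p(n-j) = n - j$.
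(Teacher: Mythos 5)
Your overall plan matches the paper: both proofs expand $\gamma_i(u^e)=\gamma_i\bigl(E(u)-p\bigr)=\sum_{j}\gamma_j(E(u))\,\gamma_{i-j}(-p)$ and estimate $p$-adic valuations via Legendre's formula. For the low-order terms ($0\leq j<n$) the target bound $v_p\bigl((i-j)!\bigr)\leq i-n$ is correct, but the stated justification --- invoking $s_p(n-j)=n-j$ --- does not by itself yield it. One must split on whether $i-j<p$ (where $(i-j)!$ is a unit and the bound is trivial since $i\geq n$) or $i-j\geq p$ (where some base-$p$ digit of $i-j$ of index $\geq 1$ is nonzero, forcing $(i-j)-v_p\bigl((i-j)!\bigr)\geq p-1\geq n-j$). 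The paper carries out precisely this case distinction; you flag it as the main subtlety but do not actually resolve it.

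The genuine gap is in the tail $j\geq n$. You claim that $\binom{i}{j}\cdot\frac{(-p)^{i-j}}{i!}=\frac{(-p)^{i-j}}{j!\,(i-j)!}$ lies in $\cO_E$, citing $v_p(p^m/m!)\geq 0$. That is false once $i\geq p$: taking $j=i=p$ gives $\frac{1}{p!}$, which has $v_p=-1$. Divided-power integrality gives $\gamma_{i-j}(-p)=\frac{(-p)^{i-j}}{(i-j)!}\in\Zp$, but the extra factor $\frac{1}{j!}$ destroys integrality when $j\geq p$. What does hold, and what is actually used downstream, is that each tail term $\gamma_{i-j}(-p)\,\gamma_j(E(u))$ lies in $\Fil^j\sR\subseteq\Fil^n\sR$; one should not expect membership in $E(u)^n\cdot\widehat{\bigoplus}\cO_E E(u)^m$ as the lemma literally states (the paper's own proof works ``modulo $E(u)^n\sR$,'' which is itself stronger than needed --- all later uses only require $\Fil^n\sR$). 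Fortunately your arguments for the two \emph{in particular} consequences survive intact: for the Frobenius claim you switch to the honest integer binomial expansion of $u^{ie}=(E(u)-p)^i$, whose coefficients $\binom{i}{j}(-p)^{i-j}$ really are in $\cO_E$, and for the decomposition of $\sR$ you only need the tail of $\gamma_i(u^e)$ to land in $\cJ\supseteq\Fil^3\sR$, which the corrected weaker statement provides.
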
 
\begin{proof}
In all what follows we work modulo $E(u)^n\cdot \sR$. We have:
$$
\frac{u^{ei}}{i!}=\sum_{k=0}^{n-1}\frac{E(u)^k}{k!}\frac{1}{(i-k)!}(-p)^{i-k}
$$
and hence we are done once we can show that
$$
(i-k)-\frac{(i-k)-\mathrm{S}_p(i-k)}{p-1}\geq n-k
$$
where $\mathrm{S}_p(t)$ denotes the sum of the digits in the $p$-adic development of $t\in\N$.

If $i\leq p-1$ this is obvious.
Else, we show that $(i-k)-\frac{(i-k)-\mathrm{S}_p(i-k)}{p-1}\geq p-k$.  Defining $a_s\in\{0,\dots,p-1\}$ via $i-k=\sum_{s\geq 0}a_sp^s$ we have
$$
(i-k)-\frac{(i-k)-\mathrm{S}_p(i-k)}{p-1}=a_0+\sum_{s\geq 1}a_s\bigg(p^s-\frac{p^s-1}{p-1}\bigg);
$$
if $i=\sum_sb_sp^s\geq p>k$ with $b_s\in\{0,\dots,p-1\}$ then either $a_0=b_0-k\geq 0$ and $b_s\neq 0$ for some $s\geq1$ or $a_0=p+b_0-k$.

The last two statements now follow: from the above and the definition of $\cJ$, $\cI$ we have 
$$
\widehat{\bigoplus}_{i\geq 3}\mathcal{O}_E\cdot\frac{u^{ie}}{i!}\subseteq \cJ+p\cI+p^3\cO_E
$$
and $u^{2e}\in \cO_EE(u)^2+\cI+p^2\cO_E$, $u^e\in \cO_EE(u)+p\cO_E$.
\end{proof}

We introduce below the formalism of linear algebra with coefficients. It is the characteristic zero version of the formalism introduced in \cite{HM}, \S 2.3.2.

\begin{defi} 
Let $0 \leq a_0\leq a_1\leq a_2 \leq e$ be the integers associated to the niveau one descent data. 
For a pair $(a_i,a_j)$ let $\left[ a_i-a_j\right]\in\{0,\dots,e-1\}$ be defined by $\left[ a_i-a_j\right]\equiv -(a_i-a_j)$ modulo $e$. The $\mathscr{R}$-module of \emph{matrices with descent data} is defined as:
\begin{eqnarray*}
\mathrm{M}_{dd,3}(\mathscr{R})\defeq\left\{M\in \mathrm{M}_{3}(\SBr),\,\mathrm{s.t.}\,\,
M_{i,j}=u^{\left[ a_i-a_j\right]}m_{i,j}\,\,\mathrm{with}\,\,m_{i,j}\in\mathscr{R}
\right\}
\end{eqnarray*}
\end{defi}
The following result is an elementary check in linear algebra:
\begin{lem} The subset $\mathrm{M}_{dd,3}(\mathscr{R})$ is a subring of $\mathrm{M}_{3}(\SBr)$. Moreover, if $M\in \mathrm{M}_{dd,3}(\mathscr{R})$, then the adjugate matrix  $M^{\mathrm{adj}}$ is again an element of  $\mathrm{M}_{dd,3}(\mathscr{R})$.
\end{lem}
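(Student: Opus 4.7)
The plan is to verify the two assertions by direct computation, using only the definition of $[a_i - a_j]$ and the elementary structure of $\mathscr{R}$.

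First I would handle the subring property. Closure under addition is immediate, and the identity matrix lies in $\mathrm{M}_{dd,3}(\mathscr{R})$ since $[a_i - a_i] = 0$. For multiplicative closure, I would compute
\[
(MN)_{i,j} \;=\; \sum_{k=0}^{2} M_{i,k} N_{k,j} \;=\; \sum_{k=0}^{2} u^{[a_i - a_k] + [a_k - a_j]} m_{i,k} n_{k,j}.
\]
By definition, $[a_i - a_k] + [a_k - a_j] \equiv -(a_i - a_j) \equiv [a_i - a_j] \pmod{e}$. Since both $[a_i - a_k]$ and $[a_k - a_j]$ lie in $\{0,\ldots,e-1\}$, their sum lies in $\{0,\ldots,2e-2\}$, so it equals $[a_i - a_j] + ce$ with $c \in \{0,1\}$. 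It then suffices to observe that $\mathscr{R}$ is itself a subring of $\SBr$ (being the $\widehat\omega^0$-isotypic part of $\SBr$, where $\Delta$ acts by a ring automorphism) and that $u^e \in \mathscr{R}$. Hence $u^{ce}\, m_{i,k} n_{k,j} \in \mathscr{R}$ and the summand lies in $u^{[a_i-a_j]} \mathscr{R}$, as required.

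Next I would treat the adjugate. Each entry $(M^{\mathrm{adj}})_{i,j}$ is, up to sign, a determinant of a $2 \times 2$ submatrix and hence a difference of two products $M_{j_1,i_1} M_{j_2,i_2}$ where $\{i_1,i_2\} = \{0,1,2\}\setminus \{i\}$ and $\{j_1,j_2\} = \{0,1,2\}\setminus\{j\}$. The key numerical identity is
\[
(a_{j_1} + a_{j_2}) - (a_{i_1} + a_{i_2}) \;=\; \big(a_0+a_1+a_2 - a_j\big) - \big(a_0+a_1+a_2 - a_i\big) \;=\; a_i - a_j,
\]
so $[a_{j_1} - a_{i_1}] + [a_{j_2} - a_{i_2}] \equiv -(a_i-a_j) \equiv [a_i - a_j] \pmod{e}$. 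As before this sum, which lies in $\{0,\ldots,2e-2\}$, equals $[a_i - a_j] + ce$ for some $c \in \{0,1\}$, and $u^{ce} m_{j_1,i_1} n_{j_2,i_2} \in \mathscr{R}$ by the same closure argument. Consequently each term (and hence the signed difference) lies in $u^{[a_i - a_j]} \mathscr{R}$, establishing $M^{\mathrm{adj}} \in \mathrm{M}_{dd,3}(\mathscr{R})$.

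This is a purely formal verification; there is no substantive obstacle. The only subtlety worth emphasizing is the combinatorial identity $\{a_{j_1},a_{j_2}\} + \{a_{i_1},a_{i_2}\}$-cancellation that makes the exponent bookkeeping for the adjugate work out, together with the observation that every ``excess'' factor $u^{ce}$ with $c \in \{0,1\}$ harmlessly joins the $\mathscr{R}$-coefficient.
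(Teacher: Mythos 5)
Your proof is correct. The paper offers no proof, dismissing the lemma as an elementary check in linear algebra, and your verification is the standard one: for multiplicative closure, the congruence $[a_i-a_k]+[a_k-a_j]\equiv[a_i-a_j]\pmod e$ together with the bound $[a_i-a_k]+[a_k-a_j]\in\{0,\dots,2e-2\}$ shows each summand of $(MN)_{i,j}$ picks up an excess factor $u^{ce}$ with $c\in\{0,1\}$, which is absorbed by $\mathscr{R}$ since $u^e\in\mathscr{R}$ and $\mathscr{R}$ is a subring (being the $\widetilde\omega^0$-isotypic part); and for the adjugate, the cancellation $(a_{j_1}+a_{j_2})-(a_{i_1}+a_{i_2})=a_i-a_j$ gives the right exponent class regardless of how the two deleted-row indices are matched with the two deleted-column indices in the $2\times 2$ minor. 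One cosmetic slip: in the adjugate paragraph you write $u^{ce}m_{j_1,i_1}n_{j_2,i_2}$ where you mean $u^{ce}m_{j_1,i_1}m_{j_2,i_2}$, since there is only one matrix $M$ involved.
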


We introduce certain natural subsets of $\rM_{dd,3}(\mathscr{R})$:

 \begin{itemize}
 \item $\rGL_{dd,3}(\mathscr{R})$ the group of invertible elements in $\rM_{dd,3}(\mathscr{R})$; 
 \item $\rB_{dd,3}(\sR) \subset \rGL_{dd,3}(\mathscr{R})$ the subgroup of upper triangular matrices;
 \item $\rU^{\opp}_{dd, 3}(\sR) \subset \rGL_{dd,3}(\mathscr{R})$  the subgroup of strictly lower triangular unipotent matrices;
 \item $\rL_{dd,3}(\sR) \subset \rM_{dd,3}(\mathscr{R})$ the multiplicative monoid of lower triangular matrices.
  \item $\rT_{3}(\cO_E)$ is the subgroup of diagonal matrices with scalar entries.
 \end{itemize} 

If $M_1,\, M_2\in \rM_{dd,3}(\sR)$ and $\mathcal{K}$ is an ideal of $\sR$ it is customary to write:
\begin{equation*}
M_1\equiv M_2\,\,\mathrm{mod}\,\mathcal{K}
\end{equation*}
to mean that $M_1=M_2+M'$ for some $M'\in \rM_{dd,3}(\mathcal{K})$. Also, if $M\in \rM_{dd,3}(\mathscr{R})$ we write $M^{\mathrm{adj}}$ to denote its adjugate.

We record two elementary manipulations between matrices with descent data:
\begin{lem}
\label{lemconto3.5 new}
Let $A=\left(u^{\left[ a_i-a_j\right]}a_{i,j}\right)_{i,j}\in \rM_{dd,3}(\sR)$ and $W=\left(u^{\left[ a_i-a_j\right]}w_{i,j}\right)_{i,j}\in \rM_{dd,3}(\mathscr{R})$. Assume moreover that $W \in \rU^{\opp}_{dd, 3}(\sR) $. Then
$$
\left(W\cdot A\right)_{i,j}=\left\{\begin{array}{cc}
u^{\left[ a_i-a_j\right]}\bigg(a_{i,j}+u^e\big(\sum_{k=0}^{i-1}w_{i,k}a_{k,j}\big)\bigg)&\text{if}\,\,j\geq i\\
u^{\left[ a_i-a_j\right]}\bigg(a_{i,j}+u^e\big(\sum_{k=j+1}^{i-1}w_{i,k}a_{k,j}\big)+\sum_{k=0}^{j}w_{i,k}a_{k,j}\bigg)&\text{if}\,\,j< i\\
\end{array}\right.
$$
\end{lem}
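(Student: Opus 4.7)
The plan is a direct case analysis of the entries of the product $WA$ using the definition of the bracket $[a_i-a_j]$. I would expand
$(WA)_{i,j}=\sum_{k=0}^{2} W_{i,k}A_{k,j}$, and use that since $W\in\rU^{\opp}_{dd,3}(\sR)$ is unipotent lower triangular, $w_{i,k}=0$ for $k>i$ and $w_{i,i}=1$. The $k=i$ term then contributes the leading $u^{[a_i-a_j]}a_{i,j}$ present in both cases of the formula, and it remains to analyze the sum over $k\in\{0,\ldots,i-1\}$ of terms of the shape $u^{[a_i-a_k]+[a_k-a_j]}\,w_{i,k}a_{k,j}$.

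The heart of the argument is to compare $[a_i-a_k]+[a_k-a_j]$ with $[a_i-a_j]$. All three quantities are congruent to $a_j-a_i$ modulo $e$, so the sum differs from $[a_i-a_j]$ by a nonnegative multiple of $e$; the strong genericity hypothesis $a_2-a_0<e$ ensures that each bracket $[a_\alpha-a_\beta]$ equals either $a_\beta-a_\alpha$ or $a_\beta-a_\alpha+e$, depending only on the sign of $a_\beta-a_\alpha$. Thus $[a_i-a_k]+[a_k-a_j]$ is either $[a_i-a_j]$ or $[a_i-a_j]+e$, according to whether the wrap-around occurs zero or one time. The three mutual orderings of $a_i,a_j,a_k$ corresponding to the subcases $k<i\leq j$, $k\leq j<i$, and $j<k<i$ produce respectively an extra factor $u^e$, no extra factor, and an extra factor $u^e$.

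Finally, I would assemble the formula by factoring $u^{[a_i-a_j]}$ out of every term. When $j\geq i$, every $k\in\{0,\ldots,i-1\}$ sits in the first subcase and a single $u^e$ can be pulled out, giving the first line of the formula. When $j<i$, the indices $k\leq j$ sit in the second subcase and contribute with no extra $u^e$, while the indices $j<k<i$ sit in the third subcase and contribute with one $u^e$, giving the split summation of the second line. This is essentially bookkeeping; the only mild subtlety worth recording is that the hypothesis $a_2-a_0<e$ rules out a double wrap-around of size $2e$, ensuring that at most one factor $u^e$ appears in each term, which is precisely what the stated formula asserts.
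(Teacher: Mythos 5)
Your proof is correct, and the paper itself omits the proof of this lemma as a routine computation; your direct expansion of $(WA)_{i,j}=\sum_k W_{i,k}A_{k,j}$ followed by a case analysis on the relative position of $k$ among $i,j$ is exactly the intended argument, and the sign/wrap-around bookkeeping you give reproduces the stated formula.

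One small point is worth correcting. At the end you attribute the absence of a $u^{2e}$ factor to the genericity hypothesis $a_2-a_0<e$, but this is automatic from the definition of the bracket: each $[a_\alpha-a_\beta]$ lies in $[0,e-1]$, so $[a_i-a_k]+[a_k-a_j]\in[0,2e-2]$, and since this sum is congruent to $[a_i-a_j]\in[0,e-1]$ modulo $e$, their difference is forced to lie in $\{0,e\}$. Genericity (more precisely, the strict inequalities $a_0<a_1<a_2$ together with $a_2-a_0<e$) is instead needed, as you did correctly use earlier, so that $[a_\alpha-a_\beta]$ equals $a_\beta-a_\alpha$ or $a_\beta-a_\alpha+e$ according to the sign; without it, e.g.\ if $a_0=a_1$, the $u^e$ in the first line of the formula genuinely fails (take $i=j=1$, $k=0$). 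So the genericity is essential to the lemma, just not for the reason stated in your closing sentence.
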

\begin{proof}
Omitted.
\end{proof}

\begin{lem}
\label{lem3.6 new}
Let $W=\left(u^{\left[ a_i-a_j\right]}w_{i,j}\right)_{i,j}$ be an element in $\rU^{\opp}_{dd, 3}(\sR)$. Assume that $w_{i,j}\in \bigoplus_{k=0}^{i-j}E(u)^{k}\mathcal{O}_E$ for all $0\leq j\leq i\leq 2$.  Then $$W^{\mathrm{adj}}=\left(u^{\left[ a_i-a_j\right]}w^{\mathrm{adj}}_{i,j}\right)_{i,j} \in \rU^{\opp}_{dd, 3}(\sR) $$ satisfies
$w^{\mathrm{adj}}_{i,j}\in \bigoplus_{k=0}^{i-j}E(u)^{k}\mathcal{O}_E$ 
for all $0\leq j\leq i\leq 2$.
\end{lem}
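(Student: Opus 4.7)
The plan is a direct matrix computation. Since $W \in \rU^{\opp}_{dd,3}(\sR)$ is unipotent, $\det W = 1$, so the adjugate coincides with the inverse: $W^{\mathrm{adj}} = W^{-1}$. Write $W = I + N$ with $N$ strictly lower triangular; then $N^3 = 0$ (as $W$ is $3 \times 3$), and we have the closed form
\[ W^{\mathrm{adj}} = I - N + N^2. \]

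For the off-diagonal positions $(1,0)$ and $(2,1)$ one has $(N^2)_{i,j} = 0$, so $w^{\mathrm{adj}}_{i,j} = -w_{i,j}$ lies trivially in $\bigoplus_{k=0}^{i-j} E(u)^k \cO_E$ since this submodule is stable under negation. This takes care of two of the three nontrivial entries with essentially no work.

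The $(2,0)$ entry is the only case requiring real computation. Here $(W^{\mathrm{adj}})_{2,0} = -N_{2,0} + N_{2,1} N_{1,0}$. Under the strong genericity assumption each difference $a_i - a_j$ (for $i>j$) lies in $(0,e)$, so $[a_i - a_j] = e - (a_i - a_j)$, giving the key identity
\[ [a_2-a_1] + [a_1-a_0] \;=\; 2e - (a_2 - a_0) \;=\; e + [a_2-a_0]. \]
Consequently $N_{2,1} N_{1,0} = u^{[a_2-a_0]} \cdot u^e \cdot w_{2,1} w_{1,0}$, and therefore
\[ w^{\mathrm{adj}}_{2,0} \;=\; -w_{2,0} + u^e\, w_{2,1}\, w_{1,0}. \]

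The main obstacle is verifying that $u^e w_{2,1} w_{1,0}$ lies in $\bigoplus_{k=0}^2 E(u)^k \cO_E$. Substituting $u^e = E(u) - p$ and using that the product $w_{2,1} w_{1,0}$ has $E(u)$-degree $\leq 2$ (as the product of two elements of degree $\leq 1$), the multiplication by $u^e$ could a priori raise the $E(u)$-degree to $3$, so one needs to track the leading coefficient explicitly using the precise hypothesis on $w_{2,1}, w_{1,0}$ together with the arithmetic in $\sR$ (in particular the rewriting $u^e = E(u) - p$ and the relations between divided-power basis elements). Once this coefficient bookkeeping is completed, the three cases combine to give $W^{\mathrm{adj}} \in \rU^{\opp}_{dd,3}(\sR)$ with the asserted degree constraint on each $w^{\mathrm{adj}}_{i,j}$.
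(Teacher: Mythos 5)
Your decomposition $W^{\mathrm{adj}} = W^{-1} = I - N + N^2$ is the right one, as is the exponent identity $[a_2-a_1]+[a_1-a_0]=e+[a_2-a_0]$ (valid since each $a_{i+1}-a_i$ lies strictly between $0$ and $e$ under the genericity hypothesis), and the resulting formulas $w^{\mathrm{adj}}_{1,0}=-w_{1,0}$, $w^{\mathrm{adj}}_{2,1}=-w_{2,1}$, and $w^{\mathrm{adj}}_{2,0} = -w_{2,0} + u^e\,w_{2,1}w_{1,0}$ are all correct. The paper's proof of the lemma is omitted, so there is nothing to compare the method against.

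The gap is the final step, which you correctly flag but do not close, and which in fact cannot be closed as the lemma is printed: with the stated hypothesis $w_{2,1},w_{1,0}\in\cO_E\oplus E(u)\cO_E$, take $w_{2,1}=w_{1,0}=E(u)$; then $u^e w_{2,1}w_{1,0}=(E(u)-p)E(u)^2=E(u)^3-pE(u)^2$, which has a nonzero $E(u)^3$-component and therefore does not lie in $\bigoplus_{k=0}^{2}E(u)^k\cO_E$. No amount of ``coefficient bookkeeping'' removes this term, so the lemma as stated is false. What the paper actually invokes (see the step deducing (\ref{eqnarrayodd3 new}) inside the proof of Proposition \ref{odd new}) is the off-by-one version with hypothesis and conclusion $w_{i,j}\in\bigoplus_{k=0}^{i-j-1}E(u)^k\cO_E$, that is, $w_{1,0},w_{2,1}\in\cO_E$ and $w_{2,0}\in\cO_E\oplus E(u)\cO_E$. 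Under that hypothesis your computation closes immediately: $w_{2,1}w_{1,0}\in\cO_E$, so $u^e w_{2,1}w_{1,0}=(E(u)-p)\,w_{2,1}w_{1,0}\in\cO_E\oplus E(u)\cO_E$, whence $w^{\mathrm{adj}}_{2,0}\in\cO_E\oplus E(u)\cO_E$ as required. You should either prove this corrected statement outright or note explicitly that the $E(u)^3$ term furnishes a counterexample to the lemma as written.
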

\begin{proof}
Omitted.
\end{proof}

The following Lemma plays a crucial role in the $p$-adic convergence argument. It describes the effect of the Frobenius on the elements in $\rM_{dd,3}(\sR)$.

\begin{lem}
\label{lemphir new new}
Let $n\geq 0$ and let $M\in\rGL_{dd,3}(\mathscr{R})$. 
If $n\geq 1$, assume further that $M\in \rT_3(\cO_E)+\rM_{dd,3}(p^n\sR)$. 
\begin{itemize}
	\item[$i)$] If $[a_i-a_j]\geq 3$ for all $0\leq i,j\leq 2$ then 
	$$
	\varphi\left(M\right)\in \rT_3(\cO_E)+\rM_{dd,3}(p^n\cJ+p^n\cI+p^{n+1}\sR)
	$$
	and $M \equiv \varphi\left(M\right) \mod p^n\sR$.
	\item[$ii)$] If $M \in\mathbf{T}_{3}(\mathcal{O}_E)+\rM_{dd,3}\left(p^n(p,\mathrm{Fil}^1\sR)\right)$ then 
	$$
	\varphi\left(M\right)\in \mathbf{T}_{3}(\mathcal{O}_E)+\rM_{dd,3}\left(p^{n+1}\sR\right)
	$$
	and  $M \equiv \varphi\left(M\right) \mod p^n\sR$. 
\end{itemize}
\end{lem}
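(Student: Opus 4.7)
The strategy I would use is entry-by-entry: every matrix in $\rM_{dd,3}(\sR)$ has diagonal entries lying in $\sR$ and off-diagonal entries of the form $u^{\left[a_i-a_j\right]}m_{ij}$ with $m_{ij}\in\sR$, and $\varphi$ is a ring endomorphism of $\sR$ acting on each entry independently. The key observation is that $\varphi(u^k) = u^{pk} = u^k\cdot u^{ke}$, so off-diagonal positions pick up an extra factor $u^{\left[a_i-a_j\right]e}$ upon applying $\varphi$; the whole proof then reduces to bookkeeping against the ideals $\cI$, $\cJ$, and powers of $p$ inside $\sR$.

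The inputs I would assemble are: (a) the standard divided-power fact $\varphi(\Fil^k\sR)\subset p^k\sR$ for $k<p$, which in particular gives $\varphi\bigl((p,\Fil^1\sR)\bigr)\subset p\sR$; (b) the constant-term splitting $\sR=\cO_E\oplus\Fil^1\sR$ via $x\mapsto x(0)$; (c) Lemma \ref{lemcalcul1 new}, which supplies both the decomposition $\sR=(\cO_E\oplus\cO_E E(u)\oplus\cO_E E(u)^2)+\cI+\cJ$ and the explicit location $u^{\left[a_i-a_j\right]e}\in p^{\left[a_i-a_j\right]}\cO_E+p\cI+\cJ$ when $3\leq\left[a_i-a_j\right]\leq p-1$ (the upper bound being automatic since $\left[a_i-a_j\right]\leq e-1=p-2$); (d) that $\cI,\cJ$ are ideals of $\sR$ and $p^n\sR$ is $\varphi$-stable.

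For part $(i)$, I would write $M_{ii}=t_i+p^n\mu_i$ and split $\mu_i=\mu_i(0)+\mu_i^+$ via (b), absorbing $p^n\mu_i(0)$ into a new scalar diagonal $t_i'\in\cO_E$; then (a) yields $p^n\varphi(\mu_i^+)\in p^{n+1}\sR$, so $\varphi(M_{ii})-t_i'\in p^{n+1}\sR$. For off-diagonal positions, the genericity $\left[a_i-a_j\right]\geq 3$ lets me invoke (c) to obtain
\[
\varphi(u^{\left[a_i-a_j\right]}m_{ij})=u^{\left[a_i-a_j\right]}\cdot u^{\left[a_i-a_j\right]e}\cdot\varphi(m_{ij})\in u^{\left[a_i-a_j\right]}\cdot\bigl(p^{\left[a_i-a_j\right]}+p\cI+\cJ\bigr)\cdot p^n\sR,
\]
and since $p^{\left[a_i-a_j\right]+n}\sR\subset p^{n+1}\sR$ and $p^{n+1}\cI\subset p^n\cI$, this sits in $u^{\left[a_i-a_j\right]}(p^n\cJ+p^n\cI+p^{n+1}\sR)$, exactly as claimed.

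Part $(ii)$ should then be essentially a direct application of (a): the stronger hypothesis $m_{ij},\mu_i\in p^n(p,\Fil^1\sR)$ yields $\varphi(m_{ij}),\varphi(\mu_i)\in p^{n+1}\sR$ at once, and on off-diagonals the factor $u^{\left[a_i-a_j\right]e}\in\sR$ does not worsen this bound, so no genericity is needed here. Finally, the congruence $M\equiv\varphi(M)\bmod p^n\sR$ in both parts follows from $\varphi(p^n\sR)\subset p^n\sR$ together with the $\varphi$-invariance of scalars, so $M-\varphi(M)\in\rM_{dd,3}(p^n\sR)$ by inspection. The main obstacle is not conceptual but arithmetic: carefully tracking the interaction between the various ideals and pinpointing where the genericity $\left[a_i-a_j\right]\geq 3$ is consumed (part $(i)$ off-diagonal) and where it is not (part $(ii)$).
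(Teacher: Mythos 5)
Your proof is correct and follows essentially the same route as the paper's: both rest on the $\varphi$-stability of $\mathscr R$, the constant-term splitting $\mathscr R=\mathcal O_E\oplus\mathrm{Fil}^1\mathscr R$, the fact that $\varphi$ sends $(p,\mathrm{Fil}^1\mathscr R)$ into $p\mathscr R$, and Lemma \ref{lemcalcul1 new} to locate $u^{e[a_i-a_j]}$ for $[a_i-a_j]\geq 3$. The only difference is presentational — you carry out the entry-by-entry bookkeeping explicitly (and also make the final congruence $M\equiv\varphi(M)\bmod p^n\mathscr R$ explicit), whereas the paper phrases the same splitting more compactly as $M\in T_M+A+\rM_{dd,3}(p^n(p,\Fil^1\mathscr R))$ with $A_{(i,i)}\in p^n(p,\Fil^1\mathscr R)$.
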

\begin{proof}
As $\phz(E(u))\in p\sR^{\times}$, claim $ii)$ is obvious for any $n\geq 0$.

Let us consider $i)$. 
Note first that if $x\in \sR^{\times}$, then we can always write $x\in x_0+(p,\ \Fil^1\sR)$ for some $x_0\in \cO_E^{\times}$.
If $A\in \rM_{dd,3}(\sR)$ and since we assume $[a_i-a_j]\geq 3$ for all $0\leq i,j,\leq 3$, we deduce from Lemma \ref{lemcalcul1 new} that $\varphi\left(A\right)\in \rM_{dd,3}(\cJ+p\cI+p^3\sR)$ as soon as $A_{(i,i)}\in \Fil^1\sR$.

Hence, if $M\in\rGL_{dd,3}(\mathscr{R})$ if $n=0$ (resp. $M\in\rT_3(\cO_E)+\rM_{dd,3}(p^n\sR)$ when $n\geq 1$) we can always write $M\in T_M+A+\rM_{dd,3}(p^n(p,\Fil^1\sR))$ for some $A\in \rM_{dd,3}(p^n\sR)$ verifying $A_{(i,i)}\in p^n(p,\Fil^1\sR)$ and some $T_M\in \rT(\cO_E)$; the first claim follows. 

\end{proof}

\subsection{Proof of Theorem \ref{phifilr}}

The proof of Theorem \ref{phifilr} is a $p$-adic convergence procedure. It involves an induction argument which consists in a careful change of basis on $\cM$.  We again specialize to the case of $n = 3$ (so $r = 2$) though the procedure works more generally for ordinary families. We continue to assume that the triple $(a_0,a_1,a_2)$ is strongly generic.

If $\underline{e}=(e_0,e_1,e_2)$, $\underline{f}=(f_0,f_1,f_2)$ are a basis for $\cM$ and a generating family of $\Fil^2\cM/(\Fil^p \SBr \cdot \cM)$, which are compatible with the framing on $\tau$, we define the element 
 $V=V_{\underline{e},\underline{f}}\in \rM_{dd,3}(\sR)$ such that
$$
f_0 = \underline{e}\cdot V \vectln{c}{1\\0\\0},\,\, f_1 = \underline{e}\cdot V \vectln{c}{0\\1\\0},\,\,
f_2 = \underline{e} \cdot V \vectln{c}{0\\0\\1}
$$ 
(roughly speaking, $V\in \rM_{dd,3}(\sR)$ is the \emph{matrix of the filtration on} $\cM$).

Let $\mathrm{Mat}_{\underline{e},\underline{f}}(\varphi_2)\in\mathbf{GL}_{dd,3}(\sR)$ be the matrix such that
\begin{eqnarray*}
\varphi_2(f_0) = \underline{e}\cdot\mathrm{Mat}_{\underline{e},\underline{f}}(\varphi_2) \vectln{c}{1\\0 \\0},\,
\varphi_2(f_1) = \underline{e}\cdot\mathrm{Mat}_{\underline{e},\underline{f}}(\varphi_2) \vectln{c}{0\\1\\ 0},
\,
\varphi_2(f_2) = \underline{e}\cdot\mathrm{Mat}_{\underline{e},\underline{f}}(\varphi_2) \vectln{c}{0\\0\\1}.
\end{eqnarray*} 
(i.e. $\mathrm{Mat}_{\underline{e},\underline{f}}(\varphi_2)$ is the \emph{matrix of the Frobenius} with respect to $\underline{f}$).

We establish some preliminary lemmas to perform the induction argument to prove Theorem \ref{phifilr}. The first lemma lets us translate the effect of a change of basis on the matrices describing the Frobenius and the filtration.

From now on any basis $\un{e}=(e_0,e_1,e_2)$ and system of generators $\un{f}=(f_0,f_1,f_2)$ for $\Fil^2\cM/(\Fil^p\SBr\cdot \cM)$ are always understood to be compatible with the framing on $\tau$.

\begin{lem}
\label{lemmaprimo new}
Let  $A\in \mathrm{GL}_{dd,3}(\mathscr{R})$. Let  $\underline{e}'\defeq\underline{e}\cdot A$ be a new $\SBr$-basis for $\mathcal{M}$,  compatible with the framing on $\tau$. Assume that there exist $B\in \mathrm{GL}_{dd,3}(\sR)$ and $V'\in\mathrm{M}_{dd,3}(\sR)$ satisfying
$$
AV'=VB.
$$
The elements 
$$
f_0'\defeq \underline{e}'\cdot  V'\vectln{c}{1\\0\\0},\quad
f_1'\defeq \underline{e}'\cdot  V'\vectln{c}{0\\1\\0},
\quad
f'_2\defeq \underline{e}'\cdot V'\vectln{c}{0\\ 0\\1}
$$
form a system of $\SBr$-generators for $\mathrm{Fil}^r\mathcal{M}/\left(\Fil^p \SBr\cdot \mathcal{M}\right)$.  If we further assume that $A=\mathrm{Mat}_{\underline{e},\underline{f}}(\varphi_r)$, then we have
\begin{equation} 
\label{phichange new}
\mathrm{Mat}_{\underline{e}',\underline{f}'}(\varphi_2)=\varphi(B).
\end{equation}
\end{lem}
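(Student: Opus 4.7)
The statement is essentially a bookkeeping identity, so my plan is to simply unwind the definitions and carry out two matrix computations, one for the filtration and one for the Frobenius.

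First, to check that $\underline{f}'$ is a system of generators for $\mathrm{Fil}^2\mathcal{M}/(\Fil^p\SBr\cdot\mathcal{M})$, I would compute in row notation:
\begin{equation*}
\underline{e}'\cdot V' \;=\; (\underline{e}\cdot A)\cdot V' \;=\; \underline{e}\cdot (AV') \;=\; \underline{e}\cdot(VB) \;=\; (\underline{e}\cdot V)\cdot B \;=\; \underline{f}\cdot B,
\end{equation*}
using the hypothesis $AV' = VB$. Thus $(f_0',f_1',f_2') = (f_0,f_1,f_2)\cdot B$. Since $B\in\rGL_{dd,3}(\sR)\subseteq \rGL_3(\SBr)$ is invertible, the tuple $\underline{f}'$ spans the same $\SBr$-submodule of $\mathcal{M}$ as $\underline{f}$ modulo $\Fil^p\SBr\cdot\mathcal{M}$, and so it is also a system of generators for $\Fil^2\mathcal{M}/(\Fil^p\SBr\cdot\mathcal{M})$. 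Compatibility with the descent data follows from the fact that $V'\in\rM_{dd,3}(\sR)$ and $\underline{e}'$ is $\tau$-framed.

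Second, for the Frobenius matrix, I would use the assumption $A=\mathrm{Mat}_{\underline{e},\underline{f}}(\varphi_2)$, which by the definition of $\mathrm{Mat}_{\underline{e},\underline{f}}(\varphi_2)$ yields $\varphi_2(f_j) = \underline{e}\cdot A\cdot c_j$ where $c_j$ is the $j$-th standard basis vector; in row notation this is $\varphi_2(\underline{f}) = \underline{e}\cdot A = \underline{e}'$. From the previous step we have $\underline{f}' = \underline{f}\cdot B$, and combining with the semilinearity property $\varphi_2(bx)=\varphi(b)\varphi_2(x)$ (valid for $x\in\Fil^2\mathcal{M}$ and $b\in\SBr$), I obtain
\begin{equation*}
\varphi_2(\underline{f}') \;=\; \varphi_2(\underline{f}\cdot B) \;=\; \varphi_2(\underline{f})\cdot \varphi(B) \;=\; \underline{e}'\cdot\varphi(B),
\end{equation*}
which is exactly the statement $\mathrm{Mat}_{\underline{e}',\underline{f}'}(\varphi_2) = \varphi(B)$.

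There is no genuine obstacle here; the only small point to be mindful of is confirming that the identities above can be read entry-wise in the matrix-valued sense, i.e.\ that associativity of the formal product $\underline{e}\cdot M\cdot N$ holds and that $\varphi_2$ commutes with right multiplication by scalar matrices via the twist $\varphi$. Both are immediate from the $\SBr$-linearity and the divided Frobenius semilinearity in the definition of a strongly divisible module, and the fact that $B$ has entries in $\SBr$ acting on elements of $\Fil^2\mathcal{M}$.
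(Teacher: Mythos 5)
Your proof is correct and follows essentially the same route as the paper: rewrite $\underline{e}'\cdot V' = \underline{f}\cdot B$ via the hypothesis $AV'=VB$, invoke invertibility of $B$ to conclude $\underline{f}'$ generates the same thing modulo $\Fil^p\SBr\cdot\mathcal{M}$, and then unwind the definition of $\mathrm{Mat}_{\underline{e}',\underline{f}'}(\varphi_2)$ using the $\varphi$-semilinearity of $\varphi_2$. The paper records these steps tersely (``obviously in $\Fil^2\mathcal{M}$,'' ``simple translation of the definition''), whereas you spell out the chain $\varphi_2(\underline{f}') = \varphi_2(\underline{f}\cdot B) = \varphi_2(\underline{f})\cdot\varphi(B) = \underline{e}'\cdot\varphi(B)$; the content is the same.
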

\begin{proof}
The elements $f'_0,f'_1,f'_2$ are obviously in $\mathrm{Fil}^2 \mathcal{M}$ and, since $B$ is invertible, they form a system of $\SBr$ generators for $\mathrm{Fil}^2 \mathcal{M}$ modulo $\Fil^p\SBr\cdot \cM$. 
The last statement is a simple translation of the definition of $\mathrm{Mat}_{\underline{e}',\underline{f}'}(\varphi_2)$, $\mathrm{Mat}_{\underline{e},\underline{f}}(\varphi_2)$, recalling that $\varphi_2$ is $\varphi$-semilinear.
\end{proof}

The previous Lemma will be widely used when $A,B\in\mathbf{T}_{3}(\mathcal{O}_E)+\rM_{dd,3}\left(p^n\sR\right)$, for $n\geq1$.
We now give a criterion for when we can find a $V'=V_{\un{e}',\un{f}'}$ which allows us to perform change of basis as in Lemma \ref{lemmaprimo new}:
\begin{lem}
\label{lemmasecondo new}
Let $V\in\rM_{dd,3}(\sR)$ be a matrix associated to the filtration $\Fil^2\cM$ on $\cM$. Then $V^{\mathrm{adj}}\in E(u)\rM_{dd,3}(\sR)$.
Moreover:
\begin{itemize}
	\item[$i)$] Let $A\in \rGL_{dd,3}(\sR)$ and assume there exists $V'\in\mathrm{M}_{dd,3}(\sR)$ such that
$$
\frac{1}{E(u)}V^{\mathrm{adj}}A V'\in E(u)^2\mathbf{GL}_{dd,3}(\sR)+\rM_{dd,3}(\cJ).
$$

Then, there exist $B\in \mathbf{GL}_{dd,3}(\sR)+\rM_{dd,3}(p,\Fil^1\sR)$ such that
$$
AV'=VB.
$$

	\item[$ii)$] Let $A\in \mathbf{T}_{3}(\cO_E)+\rM_{dd,3}\left(p^n\sR\right)$ and assume there exists $V'\in\mathrm{M}_{dd,3}(\sR)$ such that
$$
\frac{1}{E(u)}V^{\mathrm{adj}}A V'\in E(u)^2\rT_3(\cO_E)+\rM_{dd,3}(p^n\cJ).
$$

Then, there exist $B\in \rT_3(\cO_E)+\rM_{dd,3}(p^n(p,\Fil^1\sR))$ such that
$$
AV'=VB.
$$
\end{itemize}
\end{lem}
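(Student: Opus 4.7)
The proof is a direct computation based on the explicit description of $V$ coming from Theorem~\ref{mainFiltr}. By the proof of Proposition~\ref{corofiltrazione}, after adjusting the generators $\underline{f}$, the matrix $V$ can be taken lower triangular with diagonal $(1, E(u), E(u)^2)$ and with $V_{32}$ divisible by $E(u)$; in particular $\det V = E(u)^3$. For the first claim, I would compute the nine cofactors of $V^{\mathrm{adj}}$ explicitly: each cofactor either vanishes by lower-triangularity of $V$, is a product of two diagonal entries (hence divisible by $E(u)$), involves the third column $(0,0,E(u)^2)^{t}$ (hence divisible by $E(u)^2$), or is $\pm V_{32}$ itself, which was arranged to lie in $E(u)\sR$. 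A brief case analysis concludes part~(a).

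For part~(i), the key identity is $V V^{\mathrm{adj}} = E(u)^3 I$; setting $\tilde V := V^{\mathrm{adj}}/E(u) \in \rM_{dd,3}(\sR)$ one has $V \tilde V = \tilde V V = E(u)^2 I$, so $V$ is invertible in $\rM_{dd,3}(\sR[1/E(u)])$. Writing the hypothesis as $\tilde V A V' = E(u)^2 M + N$ with $M \in \rGL_{dd,3}(\sR)$ and $N \in \rM_{dd,3}(\cJ)$, multiplication on the left by $V$ gives $E(u)^2 A V' = E(u)^2 V M + V N$. Dividing by $E(u)^2$ will produce the desired relation $AV' = V(M + N/E(u)^2)$ and candidate $B := M + N/E(u)^2$, provided the division is legitimate and lands in the prescribed summand.

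The main obstacle is precisely this division. Literally speaking, $\cJ$ is \emph{not} contained in $E(u)^2\,\rM_{dd,3}(\sR)$ (for instance $p^3 \in \cJ$ but $p^3 \notin E(u)\sR$, since in $\sR/E(u)\cong \cO_K$ the element $p^3$ is nonzero). However the lemma is to be applied within the context of Theorem~\ref{phifilr}, where all computations are carried out modulo $\Fil^p\SBr\cdot\cM$. In this truncation, since $k!$ is a unit in $\cO_E$ for $k<p$, the divided power structure on $\sR$ degenerates: combining Lemma~\ref{lemcalcul1 new} with the change of basis $\gamma_k(u^e)\leftrightarrow E(u)^k$ one checks that $\Fil^k\sR \equiv E(u)^k\sR \pmod{\Fil^p\sR}$ for $k\leq p-1$, and hence
\[
\cJ \;=\; p\,\Fil^2\sR + \Fil^3\sR \;\equiv\; E(u)^2\cdot (p, E(u))\,\sR \;=\; E(u)^2\cdot(p,\Fil^1\sR)\,\sR \pmod{\Fil^p\sR}.
\]
Consequently one can write $N = E(u)^2 N_0$ with $N_0 \in \rM_{dd,3}((p,\Fil^1\sR))$, so $B = M + N_0 \in \rGL_{dd,3}(\sR) + \rM_{dd,3}((p,\Fil^1\sR))$, and $V B = V M + V N_0 = V M + V N/E(u)^2 = A V'$.

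Part~(ii) follows from the same template with routine bookkeeping of the extra $p^n$ factor: the strengthened hypothesis supplies $M \in \rT_3(\cO_E)$ and $N \in \rM_{dd,3}(p^n\cJ)$, and the same identification modulo $\Fil^p\sR$ yields $N = E(u)^2 N_0$ with $N_0 \in \rM_{dd,3}(p^n(p,\Fil^1\sR))$. One concludes $B = M + N_0 \in \rT_3(\cO_E) + \rM_{dd,3}(p^n(p,\Fil^1\sR))$, exactly as required. Throughout, the descent-data compatibility is automatic because $E(u) = u^e + p$ is $\Delta$-invariant, so the operations preserve the subring $\rM_{dd,3}(\sR)$.
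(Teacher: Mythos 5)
Your proof takes the same route as the paper's: multiply the hypothesized decomposition on the left by $V$, use $VV^{\mathrm{adj}} = E(u)^3 I$, and cancel $E(u)^2$. The paper's proof simply writes the hypothesis directly as $\tfrac{1}{E(u)}V^{\mathrm{adj}}AV' = E(u)^2(B+N_0)$ with $N_0\in\rM_{dd,3}((p,\Fil^1\sR))$ and concludes in two lines; for the first claim it merely says ``clear from the height condition'' (i.e.\ $E(u)^2\cM\subseteq\Fil^2\cM$ gives $E(u)^2V^{-1}\in\rM_{dd,3}(\sR)$, and since $\det V = E(u)^3$ one has $V^{\mathrm{adj}} = E(u)\cdot E(u)^2V^{-1}$). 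Your cofactor-by-cofactor computation for the first claim is heavier and relies on the specific lower-triangular shape of $V$; the height-condition argument is cleaner and shape-independent.

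Where your writeup genuinely adds value is that you identify and address the subtlety the paper glosses over: the literal containment $\cJ\subseteq E(u)^2(p,\Fil^1\sR)$ does \emph{not} hold in $\sR$, and you correctly observe that the required extraction $N = E(u)^2 N_0$ should be read modulo $\Fil^p\sR$, which is consistent with the fact that the lemma is applied to generators of $\Fil^2\cM/\Fil^p\SBr\cdot\cM$. However, your specific counterexample is wrong: $p^3\notin\cJ$. Indeed $\cJ = p\Fil^2\sR + \Fil^3\sR$, and reducing modulo the ideal generated by the $\gamma_j(E(u))$, $j\geq p$, shows that any element of $\cJ$ is congruent to something in $pE(u)^2\sR + E(u)^3\sR$; the element $p^3$ is not, since $pE(u)^2 + E(u)^3$ evaluated at $u=0$ equals $p^3$ up to higher-order terms but $p^3$ itself is not in the ideal (one can also check directly that $p^3 - pE(u)^2 = -pu^{2e}-2p^2u^e$ lies in neither $p\Fil^2\sR$ nor $\Fil^3\sR$). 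A correct witness to $\cJ\not\subseteq E(u)^2\sR$ is $\gamma_p(E(u))=E(u)^p/p!\in\Fil^3\sR\subseteq\cJ$, which is not divisible by $E(u)^2$ in $\sR$ because $E(u)^{p-2}/p!\notin\sR$. With that repair your argument is sound and, if anything, more explicit than the paper's about where the congruence modulo $\Fil^p\sR$ is being used.
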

\begin{proof}
The first assertion is clear from the height condition. 

We prove $i)$, the proof of $ii)$ being identical.
Let us write
$$
\frac{1}{E(u)}V^{\mathrm{adj}}A V'= E(u)^2(B+N_0)
$$
where $B\in \mathbf{GL}_{dd,3}(\mathscr{R})$ and $N_0\in \mathrm{M}_{dd,3}\left((p,\mathrm{Fil}^1\SBr)\right)$.

We obtain
$$
E(u)^2 V\cdot (B+N_0)=\frac{1}{E(u)}(V\cdot V^{\mathrm{adj}})A V'=E(u)^2 A V'
$$
and the statement follows.
\end{proof}

The proof of Theorem \ref{phifilr} is now a $p$-adic approximation argument, in which we distinguish two steps.
The first one (Proposition \ref{even new}) shows that at the $2n$-th step of the $p$-convergence argument we are able to make the $\varphi_r$-action \emph{lower triangular} modulo $p^{n+1}$.

\begin{prop}[\emph{Even case}]
\label{even new}
Let $A_{0}\in \mathbf{GL}_{dd,3}(\sR)$ and, for $n\geq1$, let $A_{2n}\in \mathbf{T}_{3}(\cO_E)+\mathrm{M}_{dd,3}\left(p^n\sR\right)$.

Let 
$$
V^{(2n)}=\maqn{ccc}{1&0&0\\u^{\left[ a_1-a_0\right]}v_{1,0}^{(2n)}&1&0\\
u^{\left[ a_2-a_0\right]}v_{2,0}^{(2n)}&u^{\left[ a_2-a_1\right]}v_{2,1}^{(2n)}&1}\mathrm{Diag}(1,E(u),E(u)^2)
$$ 
for some elements $v^{(2n)}_{1,0},\,v^{(2n)}_{2,1}\in\cO_E$, $v^{(2n)}_{2,0}\in\cO_E\oplus E(u)\cO_E$.

Then there exist elements $v^{(2n+1)}_{1,0},\,v^{(2n+1)}_{2,1}\in\cO_E$, $v^{(2n+1)}_{2,0}\in\cO_E\oplus E(u)\cO_E$ such that:
\begin{eqnarray}
\label{equation step 2n}\\\nonumber
\frac{1}{E(u)}V^{(2n),\mathrm{adj}} A_{(2n)} V^{(2n+1)}\in\left\{
\begin{array}{cc}
E(u)^2\overline{\mathbf{B}}_{dd,3}(\mathscr{R})+\rM_{dd,3}\left(\cJ\right)&\text{if}\,\,n=0\\
\\
E(u)^2\left(\mathbf{T}_{3}(\mathcal{O}_E)+\rL_{dd,3}\left(p^n\mathscr{R}\right)\right)
+\rM_{dd,3}\left(p^n\cJ\right)&\text{if}\,\,n\geq 1\\
\end{array}
\right.
\end{eqnarray}
where 
$$
V^{(2n+1)}\defeq
\maqn{ccc}{1&0&0\\u^{\left[ a_1-a_0\right]}v_{1,0}^{(2n+1)}&1&0\\
u^{\left[ a_2-a_0\right]}v_{2,0}^{(2n+1)}&u^{\left[ a_2-a_1\right]}v_{2,1}^{(2n+1)}&1}\mathrm{Diag}(1,E(u),E(u)^2).
$$

In particular, any element $B_{2n}\in\rGL_{dd,3}(\sR)+\rM_{dd,3}\left((p,\Fil^1\sR)\right)$ deduced from the equation (\ref{equation step 2n}) via Lemma \ref{lemmasecondo new} verifies:
\begin{itemize}
	\item[(i)] $B_{2n}\in\rB^{\opp}_{dd,3}(\sR)+\rM_{dd,3}((p,\Fil^1\sR))$ if $n=0$;
	\item[(ii)] $B_{2n}\in\rT_3(\cO_E)+\rL_{dd,3}(p^n\sR)+\rM_{dd,3}\left(p^{n}(p,\Fil^1\sR)\right)$ if $n>0$.
\end{itemize} 
\end{prop}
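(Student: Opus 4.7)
The approach is to reduce the computation to explicit linear algebra by decomposing $V^{(2n)} = U^{(2n)} D$ with $D = \mathrm{Diag}(1, E(u), E(u)^2)$ and $U^{(2n)} \in \rU^{\opp}_{dd,3}(\sR)$. Since $\det U^{(2n)} = 1$, we have $V^{(2n),\mathrm{adj}} = D^{\mathrm{adj}} (U^{(2n)})^{-1} = E(u)^3 D^{-1} (U^{(2n)})^{-1}$, and Lemma \ref{lem3.6 new} ensures that $(U^{(2n)})^{-1}$ remains in $\rU^{\opp}_{dd,3}(\sR)$ with entries of the right shape. Substituting, the left hand side of (\ref{equation step 2n}) rewrites as
$$
\frac{1}{E(u)} V^{(2n),\mathrm{adj}} A_{2n} V^{(2n+1)} = \mathrm{Diag}(E(u)^2, E(u), 1) \cdot (U^{(2n)})^{-1} A_{2n} U^{(2n+1)} \cdot D,
$$
whose $(i,j)$ entry equals $E(u)^{2-i+j} \cdot \big[(U^{(2n)})^{-1} A_{2n} U^{(2n+1)}\big]_{i,j}$.

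A first observation is that the strictly upper triangular entries ($j > i$) automatically satisfy the required membership: the exponent $2-i+j \geq 3$ produces a factor of $E(u)^3 \in \Fil^2 \sR \cdot \Fil^1 \sR \subset \cJ$. When $n \geq 1$, write $A_{2n} = T + p^n E$ with $T \in \rT_3(\cO_E)$ and $E \in \rM_{dd,3}(\sR)$; then $(U^{(2n)})^{-1} T U^{(2n+1)}$ is a product of lower triangular matrices and is therefore lower triangular, so the strictly upper triangular part of $(U^{(2n)})^{-1} A_{2n} U^{(2n+1)}$ is automatically divisible by $p^n$. Combined with the $E(u)^3$ factor, these entries lie in $p^n \cJ$ independently of any choice of parameters.

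The three free parameters $v^{(2n+1)}_{1,0}, v^{(2n+1)}_{2,1}, v^{(2n+1)}_{2,0}$ are then used to kill the three strictly lower triangular entries modulo $p^n \cJ$ (resp.\ $\cJ$ when $n = 0$), producing the shape $E(u)^2 \rL_{dd,3}(p^n \sR)$ (resp.\ $E(u)^2 \overline{\rB}_{dd,3}(\sR)$). Each such constraint is a linear equation in the corresponding parameter whose leading coefficient is a unit of $\cO_E$, coming from the appropriate diagonal entry of $A_{2n}$ modulo $p$, so a unique solution exists modulo the required ideal. The strong genericity assumption $a_1 - a_0, a_2 - a_1 > 3$ is used both to meet the divisibility hypothesis of Lemma \ref{lemphir new new} and, via the decomposition of $u$-powers furnished by Lemma \ref{lemcalcul1 new}, to realize $v^{(2n+1)}_{1,0}$ and $v^{(2n+1)}_{2,1}$ in $\cO_E$ and $v^{(2n+1)}_{2,0}$ (corresponding to the longest shift $u^{[a_2-a_0]}$) in $\cO_E \oplus E(u)\cO_E$, consistent with the ansatz for $V^{(2n+1)}$.

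With the parameters so chosen (\ref{equation step 2n}) holds, and the final claim about $B_{2n}$ follows from Lemma \ref{lemmasecondo new}: part $i)$ supplies $B_0 \in \rGL_{dd,3}(\sR) + \rM_{dd,3}((p,\Fil^1 \sR))$ whose lower triangular shape is inherited from the left hand side, while part $ii)$, applied to the diagonal component and combined with the observation that the $E(u)^2 \rL_{dd,3}(p^n \sR)$ contribution is preserved under the translation $AV' = VB$, yields $B_{2n} \in \rT_3(\cO_E) + \rL_{dd,3}(p^n \sR) + \rM_{dd,3}(p^n(p,\Fil^1 \sR))$. The main technical obstacle is bookkeeping: tracking the interplay of the descent shifts $u^{[a_i-a_j]}$, powers of $E(u)$, and the factor $p^n$ so that every residue lands in the prescribed ideals $\cJ$, $\cI$, and $p^n \sR$, and verifying that the genericity bounds provide exactly enough room to solve each scalar equation inside the stipulated $\cO_E$-submodule.
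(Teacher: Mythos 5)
Your overall strategy matches the paper's: decompose $V^{(2n)} = U^{(2n)} D$ with $D = \mathrm{Diag}(1,E(u),E(u)^2)$, reduce to the conjugate $\mathrm{Diag}(E(u)^2,E(u),1)\,(U^{(2n)})^{-1}A_{2n}U^{(2n+1)}\,D$, observe that the strictly upper triangular entries carry an $E(u)^{\geq 3}$ factor (and a $p^n$ factor for $n\geq 1$ by splitting $A_{2n}=T+p^nE$), and solve a linear problem for the strictly lower triangular entries. This is exactly what the paper does with $W := V^{\mathrm{adj}} = (U^{(2n)})^{-1}$.

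However, the stated target for the lower triangle is too aggressive and would not be achievable with the constrained ansatz for $V^{(2n+1)}$. You aim to "kill the three strictly lower triangular entries modulo $p^n\cJ$," but the paper instead shows the final $(i,j)$ entries (for $i>j$) can be made to lie in $u^{[a_i-a_j]}\cdot p^n\Fil^2\sR$, a strictly larger ideal containing $p^n E(u)^2\cO_E$, which is \emph{not} contained in $p^n\cJ$. The reason the larger ideal is unavoidable: solving the linear system exactly gives $v_{2,1} = -m_{2,2}^{-1}m_{2,1}$ and $(v_{1,0},v_{2,0})^T = -M_0^{-1}(m_{1,0},m_{2,0})^T$, which land in $\cO_E + p^n\Fil^1\sR$ (resp.\ $\cO_E\oplus E(u)\cO_E + p^n\Fil^2\sR$) but not in $\cO_E$ (resp.\ $\cO_E\oplus E(u)\cO_E$) on the nose. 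Rounding to the stipulated $\cO_E$-module introduces an error in $p^n\Fil^1\sR$ (resp.\ $p^n\Fil^2\sR$), and after the $E(u)$-twist this contributes to the final entry a term in $p^n\Fil^2\sR\setminus p^n\cJ$, hence your phrasing "producing the shape $E(u)^2\rL_{dd,3}(p^n\sR)$" as though the lower triangle literally lay there is not right --- the correct statement is $p^n\Fil^2\sR = p^n E(u)^2\cO_E + p^n\Fil^3\sR \subset E(u)^2 p^n\sR + p^n\cJ$, which is exactly why the proposition's right-hand side is the sum $E(u)^2(\rT_3 + \rL_{dd,3}(p^n\sR)) + \rM_{dd,3}(p^n\cJ)$. (The stronger $p^n\cJ$ target is what the \emph{odd} step achieves; there the hypotheses on $A_{2n+1}$ are tighter, with error in $p^n\cJ + p^n\cI + p^{n+1}\sR$ rather than $p^n\sR$.) Two further imprecisions: the constraints for $v_{1,0},v_{2,0}$ form a coupled $2\times 2$ system $M_0 (v_{1,0},v_{2,0})^T \equiv -(m_{1,0},m_{2,0})^T$, not two scalar equations ``in the corresponding parameter''; and strong genericity is not invoked inside this proposition's proof --- it enters via Lemma \ref{lemphir new new} only in the bridging lemmas that pass $A_{2n}\rightsquigarrow A_{2n+1}$.
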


In a similar fashion, we can \emph{diagonalize} the $\varphi_r$-action modulo $p^{n+1}$ at the step $2n+1$:
\begin{prop}[Odd case]
\label{odd new}
Let $n\geq 0$ and let
$$
A_{2n+1}\in\mathbf{T}_{3}(\mathcal{O}_E)+\rM_{dd,3}\left((p^n\cJ+p^n\cI)\right)+\rM_{dd,3}(p^{n+1}\sR).
$$
Let 
$$
V^{(2n+1)}=\maqn{ccc}{1&0&0\\u^{\left[ a_1-a_0\right]}v_{1,0}^{(2n+1)}&1&0\\
u^{\left[ a_2-a_0\right]}v_{2,0}^{(2n+1)}&u^{\left[ a_2-a_1\right]}v_{2,1}^{(2n+1)}&1}\mathrm{Diag}(1,E(u),E(u)^2)
$$ 
for some elements $v^{(2n+1)}_{1,0},\,v^{(2n+1)}_{2,1}\in\cO_E$, $v^{(2n+1)}_{2,0}\in\cO_E\oplus E(u)\cO_E$.

Then there exist elements $v^{(2n+2)}_{1,0},\,v^{(2n+2)}_{2,1}\in\cO_E$, $v^{(2n+2)}_{2,0}\in\cO_E\oplus E(u)\cO_E$ such that
\begin{eqnarray}
\label{equation step 2n+1}\\\nonumber
\frac{1}{E(u)}V^{(2n+1),\mathrm{adj}} A_{(2n+1)} V^{(2n+2)}\in
E(u)^2\mathbf{T}_{3}(\mathcal{O}_E)+\rM_{dd,3}\left(p^n\cJ\right)
\end{eqnarray}
where 
$$
V^{(2n+2)}\defeq 
\maqn{ccc}{1&0&0\\u^{\left[ a_1-a_0\right]}v_{1,0}^{(2n+2)}&1&0\\
u^{\left[ a_2-a_0\right]}v_{2,0}^{(2n+2)}&u^{\left[ a_2-a_1\right]}v_{2,1}^{(2n+2)}&1}\mathrm{Diag}(1,E(u),E(u)^2).
$$

In particular, any element $B_{2n+1}\rGL_{dd,3}(\sR)+\rM_{dd,3}\left((p,\Fil^1\sR)\right)$ deduced from the equation (\ref{equation step 2n+1}) via Lemma \ref{lemmasecondo new} verifies:
$$
B_{2n+1}\in\rT_3(\cO_E)+\rM_{dd,3}\left(p^{n}(p,\Fil^1\sR)\right).
$$

\end{prop}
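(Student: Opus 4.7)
The plan is to mirror the proof of Proposition \ref{even new} while exploiting the stronger hypothesis on $A_{2n+1}$ to obtain a diagonal (rather than merely lower triangular) output modulo $p^n\cJ$. Write $A_{2n+1}=T+N_A$ with $T=\mathrm{Diag}(\lambda_0,\lambda_1,\lambda_2)\in\rT_3(\cO_E)$ and $N_A\in\rM_{dd,3}(\mathcal{K})$, where $\mathcal{K}:=p^n\cJ+p^n\cI+p^{n+1}\sR$. Observe that $\mathcal{K}=p^n\Fil^3\sR+p^{n+1}\sR$ since $p^{n+1}\sR$ absorbs $p^{n+1}\Fil^k\sR$ for every $k$. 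Decompose $V^{(2n+1)}=WD$ and $V^{(2n+2)}=W'D$ with $D=\mathrm{Diag}(1,E(u),E(u)^2)$ and $W,W'\in\rU^{\opp}_{dd,3}(\sR)$; then $Y:=\frac{1}{E(u)}V^{(2n+1),\mathrm{adj}}A_{2n+1}V^{(2n+2)}$ factors as $\mathrm{Diag}(E(u)^2,E(u),1)\cdot W^{\mathrm{adj}}(T+N_A)W'\cdot D$, with entries $Y_{i,j}=E(u)^{2-i+j}\bigl(W^{\mathrm{adj}}(T+N_A)W'\bigr)_{i,j}$.

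I first dispose of the entries with $i\leq j$. Since $W^{\mathrm{adj}}$ and $W'$ are unipotent lower triangular, $(W^{\mathrm{adj}}TW')_{i,j}=0$ for $i<j$, reducing $Y_{i,j}$ to the $N_A$-contribution multiplied by $E(u)^{2-i+j}$ with exponent $\geq 3$. A direct filtration check using $E(u)\cdot\Fil^k\sR\subset\Fil^{k+1}\sR$ and the decomposition of $\mathcal{K}$ shows $E(u)^3\mathcal{K}\subset p^n\cJ$, so all upper-triangular entries lie in the desired ideal. On the diagonal, $Y_{i,i}=E(u)^2\lambda_i+E(u)^2\cdot(W^{\mathrm{adj}}N_AW')_{i,i}$, and the analogous check ($E(u)^2\Fil^3\subset\Fil^5\subset\cJ$ and $E(u)^2\cdot p^{n+1}\sR\subset p^{n+1}\Fil^2\sR\subset p^n\cJ$) gives $Y_{i,i}\equiv E(u)^2\lambda_i$ modulo $p^n\cJ$, as required.

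The substantive work is then the three lower-triangular positions $(i,j)\in\{(1,0),(2,1),(2,0)\}$. For each, I expand $(W^{\mathrm{adj}}TW')_{i,j}$ explicitly: the result is $u^{[a_i-a_j]}$ times an $\cO_E$-linear expression in the entries of $W$ and $W'$, in which the unknown $v^{(2n+2)}_{i,j}$ appears with coefficient $\lambda_i\in\cO_E^{\times}$. Using the canonical decomposition $\sR=\cO_E\oplus\cO_E E(u)\oplus\cO_E E(u)^2+\cI+\cJ$ from Lemma \ref{lemcalcul1 new}, the condition $Y_{i,j}\in u^{[a_i-a_j]}p^n\cJ$ translates to the vanishing of the coefficients of $1,E(u),E(u)^2$ in the corresponding scalar expression, modulo appropriate powers of $p$. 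For $(1,0)$ and $(2,1)$ the prefactor $E(u)$ shifts the problem up, leaving a single $\cO_E$-equation that is uniquely solvable for $v^{(2n+2)}_{i,j}\in\cO_E$ because $\lambda_i$ is a unit. For $(2,0)$ there is no prefactor, so two $\cO_E$-coefficients (constant and $E(u)$) must be killed simultaneously, which is precisely why the statement allows $v^{(2n+2)}_{2,0}\in\cO_E\oplus E(u)\cO_E$; the remaining $E(u)^2$-coefficient automatically lies in $p^{n+1}\cO_E$ (since $N_A$ already does), whence in $p^n\cJ$.

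The final assertion on $B_{2n+1}$ is then formal: part $(ii)$ of Lemma \ref{lemmasecondo new} converts equation (\ref{equation step 2n+1}) into $A_{2n+1}V^{(2n+2)}=V^{(2n+1)}B_{2n+1}$ with $B_{2n+1}\in\rT_3(\cO_E)+\rM_{dd,3}(p^n(p,\Fil^1\sR))$. The main obstacle I foresee is the $(2,0)$ entry: the cross term $(W^{\mathrm{adj}})_{2,1}\,\lambda_1\,W'_{1,0}$ couples the just-determined $v^{(2n+2)}_{1,0}$ with $v^{(2n+2)}_{2,0}$, and the identity $[a_2-a_1]+[a_1-a_0]=e+[a_2-a_0]$ produces an extra factor of $u^e$ that must be carefully traced through the relations $u^e=E(u)-p$ and $p\Fil^1\subset\cI$ to verify that the two-variable $\cO_E\oplus E(u)\cO_E$-system has a solution whose remainder lands genuinely in $p^n\cJ$, rather than merely in the coarser $p^n\cJ+p^n\cI$.
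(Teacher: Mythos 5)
Your proposal follows the same path as the paper's argument (write $V^{(2n+1)}=WD$ with $W$ unipotent lower triangular, factor out $\mathrm{Diag}(E(u)^2,E(u),1)$, dispose of the upper-triangular and diagonal entries by filtration estimates, then solve the linear system for the three lower-triangular unknowns), and the preliminary reductions --- the decomposition $\mathcal{K}=p^n\Fil^3\sR+p^{n+1}\sR$, the inclusions $E(u)^3\mathcal{K}\subset p^n\cJ$ and $E(u)^2\mathcal{K}\subset p^n\cJ$, the observation that $(W^{\mathrm{adj}}TW')_{i,j}=0$ for $i<j$ --- are all correct. However, your final paragraph explicitly defers rather than resolves the decisive issue: you say you ``foresee'' the obstruction at the $(2,0)$ entry and that the computation ``must be carefully traced through \ldots to verify that the \ldots remainder lands genuinely in $p^n\cJ$, rather than merely in the coarser $p^n\cJ+p^n\cI$.'' That verification is precisely the content of the last third of the paper's proof and you do not supply it, so the argument is incomplete.

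The missing ingredient is a concrete handling of $p^n\cI$. After solving the $2\times 2$ system for $(v_{1,0},v_{2,0})$ via $M_0^{-1}$ and the scalar equation for $v_{2,1}$ via $m_{2,2}^{-1}$, one gets elements in $\cO_E+\mathcal{K}$ (resp.\ $\cO_E\oplus E(u)\cO_E+\mathcal{K}$), where $\mathcal{K}=p^n\cJ+p^n\cI+p^{n+1}\sR$. The paper then invokes Lemma~\ref{lemcalcul1 new} in the form $p^{n+1}\sR\subset p^{n+1}\cO_E+p^n\cI+p^n\cJ$ to reduce to errors in $p^n\cI+p^n\cJ$, and then writes $p^n\cI=p^{n+1}E(u)\sR\subset p^{n+1}E(u)\cO_E+p^n\cJ$. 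For the entries $v_{1,0}$ and $v_{2,1}$ the spurious term $p^{n+1}E(u)\cO_E$ can simply be deleted, because the equations for these positions carry a prefactor of $E(u)$ and $E(u)\cdot p^{n+1}E(u)\cO_E\subset p\,\cdot p^{n}\Fil^2\sR\subset p^n\cJ$; for $v_{2,0}$ --- which has no $E(u)$ prefactor --- the $p^{n+1}E(u)\cO_E$ term is instead absorbed into the $E(u)\cO_E$-component of $v_{2,0}$, which is exactly why the target space for $v^{(2n+2)}_{2,0}$ is $\cO_E\oplus E(u)\cO_E$ and not merely $\cO_E$. Without this case distinction (killed by the prefactor vs.\ absorbed into $v_{2,0}$) the convergence argument degrades to an output modulo $p^n\cJ+p^n\cI$, which is too weak to continue the alternation with Proposition~\ref{even new}. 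You should carry this out explicitly; as stated your proposal identifies the correct tools ($u^e=E(u)-p$, $p\Fil^1\subset\cI$) but stops short of the actual verification.

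A secondary point: you assert that the coefficient of the unknown $v^{(2n+2)}_{i,j}$ in position $(i,j)$ is exactly $\lambda_i$, but in fact it is $m_{i,i}$, which lies in $\lambda_i+\mathcal{K}$ rather than being equal to $\lambda_i$; invertibility of $m_{i,i}$ in $\sR$ (not of $\lambda_i$ in $\cO_E$) is what makes the system solvable, and the discrepancy $m_{i,i}-\lambda_i\in\mathcal{K}$ contributes to the error terms you must control. This is benign, but glossing over it is exactly what hides the $p^n\cI$ bookkeeping issue you flag at the end.
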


The proof of Proposition \ref{even new}, \ref{odd new}, which is the key technical part in the approximation argument, is carried out in section \S \ref{subsectionproof new} below.

We now show how Proposition \ref{even new} and \ref{odd new} let us perform the $p$-adic convergence argument giving rise to Theorem \ref{phifilr}.

\begin{lem}[\emph{even} $\Rightarrow$ \emph{odd}]
Let $n\in\N$ and write
$$
\frac{1}{E(u)}V^{(2n),\mathrm{adj}} A_{(2n)} V^{(2n+1)}=E(u)^2\left(B_{2n}\right)
$$
where $V^{(2n)}$, $V^{(2n+1)}$, $A_{(2n)}$ and $B_{2n}$ are as in the statement of Proposition \ref{even new}.

Then the element $A_{2n+1}\defeq\varphi\left(B_{2n}\right)$ veryfies the hypotheses of Proposition \ref{odd new}. 
\end{lem}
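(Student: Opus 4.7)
The target is to verify that $A_{2n+1} := \varphi(B_{2n})$ lies in
\[
\rT_3(\cO_E) + \rM_{dd,3}(p^n\cJ + p^n\cI) + \rM_{dd,3}(p^{n+1}\sR),
\]
which is exactly the hypothesis of Proposition \ref{odd new}. My approach is to decompose $B_{2n}$ according to the form supplied by Proposition \ref{even new} and to apply Lemma \ref{lemphir new new} piecewise to each summand. A brief preliminary check is required: the strong genericity \eqref{strong genericity} forces $[a_i - a_j] \geq 4$ for every $i \neq j$ (either $[a_i - a_j] = a_j - a_i > 3$ when $i < j$, or $[a_i - a_j] = e - (a_i - a_j) > e - (p-4) = 3$ when $i > j$), so the off-diagonal hypothesis $[a_i - a_j] \geq 3$ in Lemma \ref{lemphir new new}(i) is automatic throughout.

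For $n = 0$, Proposition \ref{even new}(i) writes $B_0 = L_0 + N_0$ with $L_0 \in \rB^{\opp}_{dd,3}(\sR) \subset \rGL_{dd,3}(\sR)$ and $N_0 \in \rM_{dd,3}((p, \Fil^1\sR))$. Lemma \ref{lemphir new new}(i) applied to $L_0$ gives $\varphi(L_0) \in \rT_3(\cO_E) + \rM_{dd,3}(\cJ + p\cI + p\sR)$; Lemma \ref{lemphir new new}(ii) applied to $N_0$ gives $\varphi(N_0) \in \rM_{dd,3}(p\sR)$. Summing yields $A_1 \in \rT_3(\cO_E) + \rM_{dd,3}(\cJ + \cI + p\sR)$, which is the desired containment for $n=0$.

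For $n \geq 1$, Proposition \ref{even new}(ii) provides the decomposition $B_{2n} = T_n + L_n + N_n$ with $T_n \in \rT_3(\cO_E)$, $L_n \in \rL_{dd,3}(p^n\sR)$ and $N_n \in \rM_{dd,3}(p^n(p, \Fil^1\sR))$. Clearly $\varphi(T_n) = T_n$. Viewing $L_n$ as an element of $\rT_3(\cO_E) + \rM_{dd,3}(p^n\sR)$ and using the genericity observation above, Lemma \ref{lemphir new new}(i) delivers $\varphi(L_n) \in \rT_3(\cO_E) + \rM_{dd,3}(p^n\cJ + p^n\cI + p^{n+1}\sR)$, while Lemma \ref{lemphir new new}(ii) applied to $N_n$ delivers $\varphi(N_n) \in \rM_{dd,3}(p^{n+1}\sR)$. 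Summing the three contributions yields the asserted bound on $A_{2n+1}$.

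No genuine obstacle arises: the statement is essentially bookkeeping ensuring that the output format of Proposition \ref{even new} matches, after applying $\varphi$, the input format of Proposition \ref{odd new}. The only mildly delicate points are the invertibility assumption of Lemma \ref{lemphir new new}(i) at $n=0$ (handled by the inclusion $\rB^{\opp}_{dd,3}(\sR) \subset \rGL_{dd,3}(\sR)$) and the systematic use of strong genericity to guarantee the off-diagonal descent data exponents all exceed $3$.
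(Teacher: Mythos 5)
Your overall plan — reduce to Lemma \ref{lemphir new new} — is the right one and is exactly what the paper does (the paper's proof is a single line invoking Lemma \ref{lemphir new new}(i)). However, the way you invoke the lemma is formally flawed. The paper applies Lemma \ref{lemphir new new}(i) to $B_{2n}$ \emph{as a whole}: it is invertible (for $n=0$ because its image in $\rB^{\opp}_{dd,3}(\sR)$ is, for $n \geq 1$ because the $\rT_3(\cO_E)$ part dominates), and for $n \geq 1$ it visibly lies in $\rT_3(\cO_E) + \rM_{dd,3}(p^n\sR)$ since $\rL_{dd,3}(p^n\sR) + \rM_{dd,3}(p^n(p,\Fil^1\sR)) \subset \rM_{dd,3}(p^n\sR)$. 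That single application delivers the required membership.

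You instead split $B_{2n}$ into summands and apply the lemma piecewise, which runs into two problems. First, Lemma \ref{lemphir new new} (both parts (i) and (ii)) is stated with the blanket hypothesis $M \in \rGL_{dd,3}(\sR)$; the pieces $N_0$, $N_n \in \rM_{dd,3}(p^n(p,\Fil^1\sR))$ and $L_n \in \rL_{dd,3}(p^n\sR)$ are not invertible, so the lemma cannot be invoked for them as stated. Second, your claim that $L_n$ can be ``viewed as an element of $\rT_3(\cO_E) + \rM_{dd,3}(p^n\sR)$'' is literally false: $\rT_3(\cO_E)$ consists of diagonal matrices with invertible scalar entries, so every element of $\rT_3(\cO_E) + \rM_{dd,3}(p^n\sR)$ is invertible, whereas $L_n$ (all of whose entries lie in $p^n\sR$) is not. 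Your computations still lead to the correct conclusion, because $\varphi$ is additive and the key inclusion in the proof of Lemma \ref{lemphir new new}(i) — that $\varphi(A) \in \rM_{dd,3}(\cJ + p\cI + p^3\sR)$ whenever $A \in \rM_{dd,3}(\sR)$ has $A_{(i,i)} \in \Fil^1\sR$ — does not require invertibility. But as written, your argument appeals to a lemma whose hypotheses you have not verified, rather than to the additive mechanism underneath it. The remedy is simply to apply Lemma \ref{lemphir new new}(i) directly to $B_{2n}$, checking that $B_{2n} \in \rGL_{dd,3}(\sR)$ and (for $n \geq 1$) $B_{2n} \in \rT_3(\cO_E) + \rM_{dd,3}(p^n\sR)$, both of which follow immediately from Proposition \ref{even new}. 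Your genericity check $[a_i - a_j] \geq 4$ for $i \neq j$ is correct and does belong in the argument.
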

\begin{proof}
It is an immediate consequence of Lemma \ref{lemphir new new} $i)$.
\end{proof}

\begin{lem}[\emph{odd} $\Rightarrow$ \emph{even}]
Let $n\in\N$ and write
$$
\frac{1}{E(u)}V^{(2n+1),\mathrm{adj}} A_{(2n+1)} V^{(2n+2)}=E(u)^2B_{2n+1}
$$
where $V^{(2n+1)}$, $V^{(2n+2)}$, $A_{(2n+1)}$ and $B_{2n+1}$ are as in the statement of Proposition \ref{odd new}.

Then the element $A_{2n+2}\defeq\varphi\big(B_{2n+1})$ veryfies the hypotheses of Proposition \ref{even new}. 
\end{lem}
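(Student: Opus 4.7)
The plan is to extract this inductive step directly from part $(ii)$ of Lemma \ref{lemphir new new}, which precisely governs how $\varphi$ improves the $p$-adic precision of matrices lying in $\mathbf{T}_{3}(\cO_E) + \rM_{dd,3}\bigl(p^{n}(p,\Fil^1\sR)\bigr)$. What must be verified is that $A_{2n+2} \defeq \varphi(B_{2n+1})$ belongs to $\mathbf{T}_{3}(\cO_E) + \rM_{dd,3}(p^{n+1}\sR)$, since this is exactly the hypothesis of Proposition \ref{even new} at step $2(n+1)$.

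First I would invoke the final assertion of Proposition \ref{odd new}, which produces the matrix $B_{2n+1}$ out of the factorization
\[
\tfrac{1}{E(u)}\,V^{(2n+1),\mathrm{adj}}\,A_{(2n+1)}\,V^{(2n+2)}\ =\ E(u)^{2}\,B_{2n+1}
\]
via Lemma \ref{lemmasecondo new}, and which guarantees
\[
B_{2n+1}\ \in\ \rT_{3}(\cO_E)\,+\,\rM_{dd,3}\bigl(p^{n}(p,\Fil^1\sR)\bigr).
\]
Then a single application of Lemma \ref{lemphir new new}$(ii)$ to $M = B_{2n+1}$ yields
\[
A_{2n+2}\,=\,\varphi(B_{2n+1})\ \in\ \mathbf{T}_{3}(\cO_E)\,+\,\rM_{dd,3}\bigl(p^{n+1}\sR\bigr),
\]
the underlying point being simply that $\varphi(p)=p$ and $\varphi(\Fil^1\sR)\subseteq p\sR$ (since $\varphi(E(u))\in p\sR^{\times}$), so each factor of $(p,\Fil^1\sR)$ contributes an extra power of $p$ after applying $\varphi$. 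This is exactly the hypothesis of Proposition \ref{even new} with $n$ replaced by $n+1$, and nothing further needs to be checked.

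No genuine obstacle is expected here. The technical heart of the oscillation between the "even" and "odd" steps has already been absorbed into Propositions \ref{even new} and \ref{odd new}; the present lemma is merely the bookkeeping verification that the image under $\varphi$ of the output of the odd step lands in the input bucket of the next even step, and this falls out immediately from the semilinear-algebra estimate Lemma \ref{lemphir new new}$(ii)$, which was tailored for precisely this transition.
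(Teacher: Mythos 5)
Your proof is correct and takes essentially the same route as the paper, which simply observes that the claim follows immediately from Lemma \ref{lemphir new new}$(ii)$ applied to $B_{2n+1}$; you have merely spelled out the bookkeeping (recording that Proposition \ref{odd new} places $B_{2n+1}$ in $\rT_3(\cO_E)+\rM_{dd,3}(p^n(p,\Fil^1\sR))$, then that $\varphi$ pushes this into $\rT_3(\cO_E)+\rM_{dd,3}(p^{n+1}\sR)$, which is the hypothesis of Proposition \ref{even new} at the next step).
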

\begin{proof}
It is an immediate consequence of Lemma \ref{lemphir new new} $ii)$.
\end{proof}

\subsection{Proofs of Propositions $\ref{even new}$, $\ref{odd new}$}
\label{subsectionproof new}

\paragraph{\textbf{The even case.}}

In order to lighten notations we write $A\defeq A_{2n}$, $V^{(2n)}=V\cdot\mathrm{Diag}(1,E(u),E(u)^2)$, 
$V^{(2n+1)}=V'\cdot\mathrm{Diag}(1,E(u),E(u)^2)$ for appropriate $V,\,V'\in\rU^{\opp}_{dd,3}(\sR)$ and define $v_{i,j}\defeq \left(V'\right)_{i,j}$.

We therefore have
$$
\frac{1}{E(u)}\cdot V^{(2n),\mathrm{adj}}=\mathrm{Diag}(E(u)^2,E(u),1)V^{\mathrm{adj}}
$$
and $W\defeq V^{\mathrm{adj}}=(u^{\left[ a_i-a_j\right]}w_{i,j})\in\rU^{\opp}_{dd,3}(\sR)$.

An immediate computation gives:
\begin{eqnarray}
\label{eqnarrayeven new}\\
\nonumber
\underbrace{\mathrm{Diag}(E(u)^2,E(u),1)\cdot W A V'\cdot \mathrm{Diag}(1,E(u),E(u)^2)}_{
\frac{1}{E(u)}V^{(2n),\mathrm{adj}} A_{(2n)} V^{(2n+1)}
}
\in\rL_{dd,3}(\mathscr{R})+\rM_{dd,3}\left((p^n\cJ)\right).
\end{eqnarray}
so that, using Lemma \ref{lemconto3.5 new} and $A\in\rGL_3(\sR)$ we see that $(WAV')_{i,i}\in A_{i,i}+p^n(p,\mathrm{Fil}^1\sR)$.
Hence the diagonal entries in the LHS of (\ref{eqnarrayeven new}) lie actually in  $E(u)^2\cO_E^{\times}+p^n\cJ$.

In order to establish Proposition \ref{even new} we are therefore left to find, for $i>j$, elements $v_{i,j}\in\mathscr{R}$ in such a way that 
\begin{eqnarray}
\label{eqnarrayeven1 new}\\
\nonumber
\left(\mathrm{Diag}(E(u)^2,E(u),1)\cdot W A V'\cdot \mathrm{Diag}(1,E(u),E(u)^2)\right)_{i,j}\equiv 0\,\, \mathrm{modulo}\, u^{[a_i-a_j]}\cdot \left(p^n\mathrm{Fil}^{2}\sR\right)
\end{eqnarray}
that is to say
$$
\big(W A V'\big)_{i,j}\equiv 0\,\, \mathrm{modulo}\, u^{[a_i-a_j]}\cdot \left(p^n\mathrm{Fil}^{i-j}\sR\right).
$$
for $2\geq i>j\geq 0$.

Let us write $WA=(u^{\left[ a_i-a_j\right]}m_{i,j})_{i,j}\in\mathbf{GL}_{dd,3}(\mathscr{R})$ where $m_{i,j}\in\sR$.
Then the condition (\ref{eqnarrayeven1 new}) is equivalent to following systems of linear equations:
\begin{equation}
\label{eqnarrayeven3 new}
D_0\underbrace{\maqn{cc}{m_{1,1}&m_{1,2}\\u^em_{2,1}&m_{2,2}}}_{M_0}\vectln{c}{v_{1,0}\\v_{2,0}}\equiv -D_0\vectln{c}{m_{1,0}\\m_{2,0}}\qquad\text{modulo}\quad
p^n\mathrm{Fil}^{2}\sR
\end{equation}
(where $D_0\defeq \mathrm{Diag}\left(E(u)u^{\left[ a_1-a_0\right]},u^{\left[ a_2-a_0\right]}\right)$) and
\begin{equation}
\label{eqnarrayeven3.1 new}
E(u)u^{\left[ a_2-a_1\right]}{m_{2,2}}\cdot v_{2,1}\equiv -E(u)u^{\left[ a_2-a_1\right]} m_{2,1}\qquad\text{modulo}\,
p^n\mathrm{Fil}^{2}\sR
\end{equation}
We have $m_{i,i}\in \sR^{\times}$ so that $M_0$, $m_{2,2}$ are invertible and we are left to define
\begin{eqnarray*}
\vectln{c}{v_{1,0}\\v_{2,0}}\defeq - M_{0}^{-1}\cdot\vectln{c}{m_{1,0}\\m_{2,0}},&\qquad&v_{2,1}\defeq -m_{2,2}^{-1}m_{2,1}.
\end{eqnarray*}
Note that the elements $v_{1,0},\,v_{2,1}$ can be assumed to be in $\cO_E$ and $v_{2,0}$ can be assumed to be in $\cO_E\oplus E(u)\cO_E$.
This concludes the proof of Proposition \ref{even new}.

\bigskip

\paragraph{\textbf{The odd case.}}
As for the previous paragraph, in order to lighten notations we write $A\defeq A_{2n+1}$ and 
$$
V^{(2n+1)}\defeq V\mathrm{Diag}(1,E(u),E(u)^2),\,\,V^{(2n+2)}=V'\mathrm{Diag}(1,E(u),E(u)^2)
$$
for appropriate $V,\,V'\in\rU^{\opp}_{dd,3}(\sR)$ and $v_{i,j}\defeq \left(V'\right)_{i,j}$.

We therefore have
$$
\frac{1}{E(u)}V^{(2n+1),\mathrm{adj}}=\mathrm{Diag}(E(u)^2,E(u),1)V^{\mathrm{adj}}
$$
and $W\defeq V^{\mathrm{adj}}=(u^{\left[ a_i-a_j\right]}w_{i,j})\in{\rU}^{\opp}_{dd,3}(\sR)$ for some $w_{i,j}\in \sR$.

Since $A\in\mathbf{T}_{3}(\mathcal{O}_E)+\rM_{dd,3}\left(p^n\sR\right)$, we see that $\left(W\cdot A\right)_{i,j}\equiv 0$ modulo $p^n$ whenever $j>i$, hence
\begin{eqnarray}
\label{eqnarrayodd new}\\
\nonumber
\underbrace{\mathrm{Diag}(E(u)^2,E(u),1)W  A  V'\mathrm{Diag}(1,E(u),E(u)^2)}_{
\frac{1}{E(u)}V^{(2n+1),\mathrm{adj}} A_{(2n+1)} V^{(2n+2)}
}\in\rL_{dd,3}(\sR)+\rM_{dd,3}\left(p^n\cJ\right).
\end{eqnarray}
Again, an immediate manipulation (using Lemma \ref{lemconto3.5 new} and the fact that $A\in\mathbf{T}_{3}(\mathcal{O}_E)+\rM_{dd,3}\left(p^n\sR\right)$) shows that 
$$
(W A V')_{i,i}\in A_{i,i}+(p^n(p,\mathrm{Fil}^1\sR))
$$ 
so that the diagonal entries in the LHS of (\ref{eqnarrayodd new}) lie actually in  $E(u)^2\cO_E^{\times}+p^n\cJ$.

In order to establish Proposition \ref{odd new} we are left to determine, for $2\geq i>j\geq 0$, the elements $v_{i,j}\in\mathscr{R}$ in such a way that 
\begin{eqnarray}
\label{eqnarrayodd1 new}\\
\nonumber
\bigg(\mathrm{Diag}(E(u)^2,E(u),1)W  A  V'\mathrm{Diag}(1,E(u),E(u)^2)\bigg)_{i,j}\equiv 0\,\,\mathrm{modulo}\,\, u^{[a_i-a_j]}\cdot \left(p^n\cJ\right),
\end{eqnarray}
that is to say
\begin{eqnarray*}
(W A V')_{i,j}\in u^{[a_i-a_j]}\cdot \left(p^n\left(p\Fil^{i-j}\sR,\Fil^{i-j+1}\sR\right)\right)
\end{eqnarray*}
(for $2\geq i>j\geq 0$) with \emph{the additional requirements} that 
\begin{eqnarray}
\label{extra}
v_{1,0},\,v_{2,1}\in \cO_E,\qquad&&v_{2,0}\in \cO_E\oplus E(u)\cO_E.
\end{eqnarray}

Let us write $WA=(u^{\left[ a_i-a_j\right]}m_{i,j})_{i,j}\in\mathbf{GL}_{dd,3}(\mathscr{R})$.
Then we have
\begin{equation}
\label{eqnarrayodd2 new}
m_{i,j}\in w_{i,j}A_{j,j}+p^n\cJ+p^n\cI+p^{n+1}\sR
\end{equation}
for all $0\leq i,j\leq 2$, as $A\in\mathbf{T}_{3}(\mathcal{O}_E)+\rM_{dd,3}\left(p^n\cJ+p^n\cI+p^{n+1}\sR\right)$. In particular, $m_{j,j}\in\sR^{\times}$.

Moreover, by hypotheses we have $V_{1,0},\,V_{2,1}\in\cO_E$ and $V_{2,0}\in \cO_E\oplus E(u)\cO_E$ so that, using Lemma \ref{lem3.6 new} we obtain:
\begin{eqnarray}
\label{eqnarrayodd3 new}
w_{2,1},\,w_{1,0}\in \cO_E,\,\,w_{2,0}\in \cO_E\oplus E(u)\cO_E.
\end{eqnarray}

As we did in the previous paragraph (the proof of the even case \ref{even new}) we are left consider the following systems of linear equations
\begin{eqnarray}
\label{eqnarrayodd5 new}\\
\nonumber
D_0\underbrace{\maqn{cc}{m_{1,1}&m_{1,2}\\u^em_{2,1}&m_{2,2}}}_{M_0}\vectln{c}{v_{1,0}\\v_{2,0}}\equiv -D_0\vectln{c}{m_{1,0}\\m_{2,0}}\qquad\text{modulo}\quad
p^n\cJ
\end{eqnarray}
where $D_0\defeq \mathrm{Diag}(E(u)u^{\left[ a_1-a_0\right]},u^{\left[ a_2-a_0\right]})$ and
\begin{eqnarray}
\label{eqnarrayodd5.1 new}\\
\nonumber
E(u)u^{\left[ a_2-a_1\right]}m_{2,2}v_{2,1}\equiv -E(u)u^{\left[ a_2-a_1\right]}m_{2,1}\qquad\text{modulo}\quad
p^n\cJ.
\end{eqnarray}

Then condition (\ref{eqnarrayodd1 new}) is now translated into the existence of solutions to (\ref{eqnarrayodd5 new}), (\ref{eqnarrayodd5.1 new}), where $v_{i,j}$ verify moreover (\ref{extra}).

By (\ref{eqnarrayodd2 new}) and (\ref{eqnarrayodd3 new}) we have $m_{2,2}\in \cO_E^{\times}+p^n\cJ+p^n\cI+p^{n+1}\sR$ and
$$
M_{0}\in 
\maq{\cO_E^{\times}}{0}{\left(\cO_E\oplus E(u)\cO_E\right)}{\cO_E^{\times}}+\rM_2(p^n\cJ+p^n\cI+p^{n+1}\sR);
$$

and hence, by Lemma \ref{lem3.6 new} we have
\begin{equation}
\label{inverse M0 new}
M_{0}^{-1}\in \maq{\cO_E^{\times}}{0}{\left(\cO_E\oplus E(u)\cO_E\right)}{\cO_E^{\times}}+
\rM_2(p^n\cJ+p^n\cI+p^{n+1}\sR).
\end{equation}

If we define $v_{2,1}\defeq -m_{2,2}m_{2,1}$ and 
\begin{eqnarray*}
\vectln{c}{v_{1,0}\\v_{2,0}}\defeq 
-M_0^{-1}\cdot
\vectln{c}{m_{1,0}\\m_{2,0}}
\end{eqnarray*}
we deduce from (\ref{inverse M0 new}), $(\ref{eqnarrayodd3 new})$ (and the important fact that $p^{n+1}\sR\in p^{n+1}\cO_E+p^n\cI+p^n\cJ$, cf. Lemma \ref{lemcalcul1 new}) that  
\begin{equation*}
v_{1,0},\,v_{2,1}\in \cO_E+\left(p^n\cJ,p^n\cI\right),\qquad v_{2,0}\in \cO_E\oplus E(u)\cO_E+\left(p^n\cJ,p^n\cI\right).
\end{equation*}

Hence 
$$
v_{1,0},\,v_{2,1}\in \cO_E+p^{n+1} E(u)\cO_E+p^n\cJ\qquad v_{2,0}\in \cO_E\oplus E(u)\cO_E+p^n\cJ
$$ 
and since $E(u)\cdot\left(p^{n+1} E(u)\cO_E\right)\in p^n\cJ$ we deduce that the elements $v_{i,j}$ defined this way verify condition (\ref{eqnarrayodd5 new}) \emph{and} can be assumed, without loss of generality, to verify condition (\ref{extra}).

This ends the proof of Proposition \ref{odd new}.

\section{Monodromy on Breuil modules}

The aim of this section is to give necessary and sufficient conditions on ``ordinary'' quasi-Breuil modules with descent data to admit a monodromy operator.

More precisely, we consider quasi-Breuil modules $\cM$ over $\overline{R}$ with descent data of principal series type $\tau=\teich{\omega}^{a_0}\oplus\teich{\omega}^{a_1}\oplus\teich{\omega}^{a_2}$, where the integers $a_i$ verify the strongly genericity assumption (\ref{strong genericity}) which arise as reductions modulo $p$ of families of the form given in Theorem \ref{phifilr}. The main result is Proposition \ref{essential image}. The proof is a fairly direct computation with the matrix for $N$ relying crucially on the genericity condition on the descent data.  

From Proposition \ref{bijection ordrer preserving} we have a lattice of quasi-Breuil submodules of $\barcM$, whose constituents $\overline{\cM}_{i,j}$ are characterized by $\Tqst^{2}(\overline{\cM}_{i,j})\cong \rhobar_{i,j}\vert_{G_{(\Qp)_{\infty}}}$. In particular, we have a filtration with rank one quotients:
\begin{equation}
\label{filtration Breuil submodules}
0\subseteq \barcM_2\subseteq\barcM_{2,1}\subseteq \barcM.
\end{equation}

We can describe quasi-Breuil modules $\overline{\cM}$ (with principal series type $\tau$)  such that $\Tqst^{2}(\overline{\cM})\cong\rhobar\vert_{G_{(\Qp)_{\infty}}}$:

\begin{prop}
\label{diagonalize mod p}
Let $\barcM\in\FBrModddN[2]$ be a quasi-Breuil module such that $\Tst^{2}(\barcM)\cong \rhobar\vert_{G_{(\Qp)_{\infty}}}$, where $\rhobar$ is ordinary Fontaine-Laffaille and strongly generic. Assume that $\barcM$ has descent data of type $\tau$.

There exists a framed basis $\underline{e}=(e_0,e_1,e_2)$ and a framed system of generators $\underline{f}=(f_0,f_1,f_2)$ for $\Fil^2\barcM$ such that:
\begin{eqnarray}
\label{shape ordinary}\\
\Mat_{\un{e}}(\Fil^2\barcM)=\small\begin{pmatrix}
1&0&0\\
u^{[a_1-a_0]}v_{1,0}&u^e&0\\
u^{[a_2-a_0]}(v_{2,0}+u^ev'_{2,0})&u^{e+[a_2-a_1]}v_{2,1}&u^{2e}
\end{pmatrix},
&
\Mat_{\un{e},\un{f}}(\phz_2)=\begin{pmatrix}
\alpha_0&0&0\\
0&\alpha_1&0\\
0&0&\alpha_2
\end{pmatrix}&
\nonumber
\end{eqnarray}
where $v_{i,j}, v'_{2,0}\in\F$, $\alpha_i\in\F^{\times}$.
\end{prop}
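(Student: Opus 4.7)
The plan is to lift $\barcM$ to a quasi-strongly divisible module $\cM$ over $\cO_E$, apply the characteristic zero analysis of Section 3 to $\cM$, and then reduce modulo $p$.

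For the lift, since $\barcM$ is a quasi-Breuil module (no monodromy), one can directly lift the underlying $\barS$-module to a free $S_{\cO_E}$-module of the same rank, lift the filtration as a direct summand, lift the $\varphi$-semilinear Frobenius matrix, and lift the descent data of type $\tau$. This produces $\cM \in \OEModddN[2]$ such that $\cM \otimes_{\cO_E}\F \cong \barcM$ in $\FBrModddN[2]$; the axioms of a quasi-strongly divisible module (e.g.\ that $\phz_2(\Fil^2\cM)$ generates $\cM$) propagate from $\barcM$ by Nakayama's lemma. Compatibility of $\Tqst^*$ with reduction mod $p$ then ensures that $\Tqst^2(\cM)$ is a lift of $\rhobar|_{G_{(\Qp)_\infty}}$.

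Now apply Theorem \ref{mainFiltr} to $\cM$: the strong genericity of $(a_0, a_1, a_2)$ yields a framed basis $(\tilde{e}_0, \tilde{e}_1, \tilde{e}_2)$ for $\cM$ such that $\Fil^2\cM/(\Fil^p\SBr\cdot\cM)$ is generated by $(\tilde{e}_0, E(u)\tilde{e}_1, E(u)^2\tilde{e}_2)$. This is precisely the hypothesis of Theorem \ref{phifilr}, which in turn produces a framed basis $\un{e}^{(\infty)}$ and a framed system of generators $\un{f}^{(\infty)}$ of $\Fil^2\cM/(\Fil^p\SBr\cdot\cM)$ realizing a diagonal Frobenius $\Mat_{\un{e}^{(\infty)}, \un{f}^{(\infty)}}(\phz_2) = \mathrm{Diag}(\lambda_0, \lambda_1, \lambda_2)$ with $\lambda_i \in \cO_E^\times$, and the explicit lower triangular filtration matrix with $v_{1,0}^{(\infty)}, v_{2,1}^{(\infty)} \in \cO_E$ and $v_{2,0}^{(\infty)} \in \cO_E \oplus E(u)\cO_E$.

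Reduction modulo $p$ concludes the proof: since $E(u) \equiv u^e \pmod{p}$, and since $\Fil^p\barS = u^{ep}\barS = 0$ in $\barS = \F[u]/(u^{ep})$, the mod $p$ reductions of $\un{e}^{(\infty)}$ and $\un{f}^{(\infty)}$ give a framed basis for $\barcM$ and framed generators for $\Fil^2\barcM$ (with no remainder from $\Fil^p\barS \cdot \barcM$) satisfying exactly (\ref{shape ordinary}), where $\alpha_i, v_{i,j}, v'_{2,0} \in \F$ are the mod $p$ reductions of $\lambda_i$ and the coefficients of $v_{i,j}^{(\infty)}$; in particular, $v_{2,0}^{(\infty)} \in \cO_E \oplus E(u)\cO_E$ reduces to $v_{2,0} + u^e v'_{2,0}$. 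The main technical checkpoint is the existence of the lift $\cM$ together with the verification that $\Tqst^2(\cM)$ lifts $\rhobar|_{G_{(\Qp)_\infty}}$, so that Theorems \ref{mainFiltr} and \ref{phifilr} both apply; once this is in place, the proposition is a direct mod $p$ translation of the characteristic zero result.
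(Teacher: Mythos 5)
Your proposal takes a genuinely different route from the paper, and it has a real gap.

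The paper's own proof is entirely a mod $p$ computation: one first invokes (the argument of) Proposition~\ref{corofiltrazione} to put $\Fil^2\barcM$ into lower triangular form with diagonal $\mathrm{Diag}(1,u^e,u^{2e})$ and simultaneously forces $A=\Mat_{\underline e,\underline f}(\phz_2)$ into the lower triangular Borel $\rB^{\opp}_{dd,3}(\barS_0)$; then one applies the change-of-basis identity of Lemma~\ref{lemmaprimo new} to replace $A$ by $\varphi(B)$, and iterates. Because $\varphi(u^e)=u^{ep}=0$ in $\barS$, the non-constant parts of $B$ die rapidly under $\varphi$, and after exactly two iterations the Frobenius matrix lands in $\F^\times$ on the diagonal, giving the ordinary form (\ref{shape ordinary}). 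No lifting to characteristic zero is needed, and the $p$-adic convergence machinery of Theorems~\ref{mainFiltr} and~\ref{phifilr} is not used.

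Your approach instead lifts $\barcM$ to a quasi-strongly divisible module $\cM$ over $\cO_E$, applies Theorems~\ref{mainFiltr} and~\ref{phifilr}, and reduces modulo $p$. The reduction step at the end is fine: $E(u)\equiv u^e$, $\Fil^p\barS=0$, and $[a_i-a_j]=e-(a_i-a_j)$, so the mod $p$ image of the shape from Theorem~\ref{phifilr} is exactly (\ref{shape ordinary}). The gap is in the lifting step. You claim the quasi-strongly divisible axioms ``propagate from $\barcM$ by Nakayama's lemma,'' but this is false for the crucial flatness axiom $\Fil^2\cM\cap I\cM=I\Fil^2\cM$: an arbitrary lift of a generating matrix for $\Fil^2\barcM$ need not produce an $\cO_E$-flat quotient $\cM/\Fil^2\cM$. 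To produce a valid lift, one must first determine the structure of $\Fil^2\barcM$ (which is exactly Proposition~\ref{corofiltrazione}, so the direct mod $p$ work is not actually saved), and then make the specific choice $\Fil^2\cM=\langle e_0,E(u)e_1,E(u)^2 e_2\rangle+\Fil^pS\cdot\cM$, for which one can verify $\cM/\Fil^2\cM\cong \cO_E[u]/E(u)\oplus\cO_E[u]/E(u)^2$ is $\cO_E$-free. Also note that $\Fil^2\barcM$ is not a direct summand of $\barcM$ as an $\barS$-module (it properly contains $u^{2e}\barcM$ but is not all of $\barcM$), so the phrase ``lift the filtration as a direct summand'' does not describe a correct operation. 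Once these points are repaired, your argument works, but it is considerably longer than the paper's direct two-step iteration and does not avoid the core mod $p$ input.
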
 
\begin{proof}
In what follows, we write $\barS_0$ to denote the $\omega^0$-isotypical component of $\barS$ (i.e. $\barS_0=\F[u^e]/u^{ep}$).

As in the proof of Proposition \ref{corofiltrazione} we see that there exists a basis $\underline{e}=(e_0,e_1,e_2)$ and a system of generators $\underline{f}=(f_0,f_1,f_2)$, compatible with both the framing on $\tau$ and the filtration (\ref{filtration Breuil submodules}), and such that
\begin{equation*}
V\defeq\Mat_{\un{e}}([f_0,f_1,f_2])=\begin{pmatrix}
1&0&0\\
u^{[a_1-a_0]}v_{1,0}&u^e&0\\
u^{[a_2-a_0]}v_{2,0}&u^{e+[a_2-a_1]}v_{2,1}&u^{2e}
\end{pmatrix}
\end{equation*}
where $v_{1,0},\,v_{2,1}\in\F$ and $v_{2,0}\in \F\oplus u^e\F$. 
As $\underline{e},\,\,\underline{f}$ are compatible with the filtration (\ref{filtration Breuil submodules}) we moreover deduce that $A\defeq\mathrm{Mat}_{\underline{e},\underline{f}}(\phz_2)\in \mathbf{GL}_{dd,3}\left(\barS_0\right)$ is actually in ${\rB}^{\opp}_{3,dd}\left(\barS_0\right)$, the Borel of lower triangular matrices.

We now apply Lemma \ref{lemmaprimo new}: it is easy to see that there exists matrices $V'\in\rL_{dd,3}\left(\barS_0\right)$ and $B\in {\rB}^{\opp}_{dd,3}\left(\barS_0\right)$ such that
$$
AV'\in VB+u^{3e}\rM_{dd,3}\left(\barS_0\right)
$$
and therefore $A'\defeq \varphi(B)$ is the matrix of the Frobenius action on $\cM$ with respect to the basis $\underline{e}'\defeq \underline{e}\cdot A$ and the system of generators $\underline{f}'\defeq \underline{e}V'$. Note that, as $A$, $V'$ are lower triangular, the new basis $\underline{e}'$ and new system of generators $\underline{f}'$ are compatible with the filtration (\ref{filtration Breuil submodules}).

We can now repeat the argument: at the end of the second iteration we end up with a framed basis for $\barcM$ and a framed system of generators for $\Fil^2\barcM$ which are compatible with both the framing on $\tau$ and the filtration (\ref{filtration Breuil submodules}), as in the statement.
\end{proof}

Let $\overline{R}$ be a complete local Noetherian $\F$-algebra with residue field $\F$. 

\begin{defi} \label{ordform}Let  $\cM$ be a quasi-Breuil module  over $\overline{R}$ with descent datum.  We say that a framed basis $\underline{e}$ and a framed system of generators $\underline{f}$ for $\Fil^2(\cM)$ is in \emph{ordinary form} if 
\begin{eqnarray*}
\Mat_{\un{e}}(\Fil^2\cM)=\begin{pmatrix}
1&0&0\\
u^{[a_1-a_0]}v_{1,0}&u^e&0\\
u^{[a_2-a_0]}(v_{2,0}+u^ev'_{2,0})&u^{e+[a_2-a_1]}v_{2,1}&u^{2e}
\end{pmatrix}
\end{eqnarray*}
and 
\begin{eqnarray*}
\Mat_{\un{e},\un{f}}(\phz_2)=\begin{pmatrix}
\alpha_0&0&0\\
0&\alpha_1&0\\
0&0&\alpha_2
\end{pmatrix}
\nonumber
\end{eqnarray*}
where $v_{i,j}, v'_{2,0}\in \overline{R}$, $\alpha_i\in\overline{R}^{\times}$.
\end{defi}

The above definition is closely related to the notion of gauge basis (\ref{def gauge}). The main difference being that here we are specifying both $\underline{e}$ and $\underline{f}$. We are now in the position of state the main result of this section.

\begin{prop}
\label{essential image}
Let $\cM\in\RBrModddN[2]$ be a quasi-Breuil module with a framed basis $\underline{e}=(e_0,e_1,e_2)$ and a framed system of generators $\underline{f}=(f_0,f_1,f_2)$ for $\Fil^2 \cM$ in ordinary form $($\ref{ordform}$)$.  Assume $a_0, a_1, a_2$ satisfy the strong genericity hypothesis $($\ref{strong genericity}$)$. Then $\cM$ is endowed with a monodromy operator if and only if $v_{2,0}=0$. 

In this case, one has
$$
N=\begin{pmatrix}
0&0&0\\
u^{[a_1-a_0]}P_{1,0}(u^e)&0&0\\
u^{[a_2-a_0]}P_{2,0}(u^e)&u^{[a_2-a_1]}P_{2,1}(u^e)&0
\end{pmatrix}
$$
where $P_{i+1,i}=\alpha_{i+1}\alpha_{i}^{-1}[a_{i+1}-a_{i}]v_{i+1,i}u^{e[a_{i+2}-a_{i}]}$ for $i=0,1$ and
$$
P_{2,0}=-\alpha_{2}\alpha_0^{-1}(v'_{2,0}([a_2-a_0]-1)-v_{1,0}v_{1,2}[a_1-a_0])u^{e\left[a_2-a_0\right]}.
$$
\end{prop}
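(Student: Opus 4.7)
The plan is to determine the matrix $\Mat_{\underline{e}}(N)$ entry-by-entry using axioms (b) $u^eN(\Fil^2\cM)\subseteq\Fil^2\cM$ and (c) $\varphi_2\circ(u^eN)=N\circ\varphi_2$ on $\Fil^2\cM$, together with descent-data compatibility. The latter immediately forces $\Mat_{\underline{e}}(N)_{ij}=u^{[a_i-a_j]}n_{ij}(u^e)$ with $n_{ij}\in\Rbar[u^e]/u^{ep}$, so only the $n_{ij}$ are unknown. I then apply axiom (c) to $f_j$ successively for $j=2,1,0$, which progressively determines the $n_{ij}$ and extracts the constraint $v_{2,0}=0$.

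For $j=2$: since $f_2=u^{2e}e_2$, after rewriting $u^eN(f_2)$ in the basis $(f_0,f_1,f_2)$ every coefficient lands in $u^{\geq e}\barS$; applying $\varphi_2$ annihilates each such coefficient (as $\varphi(u^e)=u^{ep}=0$ in $\barS$). Hence $\alpha_2N(e_2)=0$, giving $N(e_2)=0$. For $j=1$: using $N(e_2)=0$, a direct expansion gives $u^eN(f_1)=u^ef_1+u^{[a_2-a_1]}(n_{21}-[a_2-a_1]v_{2,1})f_2$ modulo terms in $u^{\geq e}\barS\cdot\Fil^2\cM$ coming from $n_{01}$ and $n_{11}$. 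Matching the $e_0,e_1$ coordinates of $\varphi_2(u^eN(f_1))=\alpha_1N(e_1)$ forces $n_{01}=n_{11}=0$, while the $e_2$-equation pins $n_{21}$ down to a scalar multiple of $u^{e[a_2-a_1]}$, producing the formula for $P_{2,1}$.

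For $j=0$, the analogous computation is where the real work happens: one finds that the $e_2$-coordinate of $u^eN(f_0)$ is $u^{e+[a_2-a_0]}$ times a polynomial in $u^e$ whose constant term depends on $n_{20}(0)$, $n_{00}(0)$ and $v_{2,0}$. Since $e+[a_2-a_0]<2e$, axiom (b) forces this constant term to vanish, yielding the relation
\[
n_{20}(0)=v_{2,0}\big([a_2-a_0]+n_{00}(0)\big).
\]
Axiom (c) then produces three further equations: the $e_0$-component forces $n_{00}=0$; the $e_1$-component determines $n_{10}$ as a scalar multiple of $u^{e[a_1-a_0]}$, recovering $P_{1,0}$; and the $e_2$-component determines $n_{20}$ as a scalar multiple of $u^{e[a_2-a_0]}$, so in particular $n_{20}(0)=0$. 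Feeding this back into the boxed relation gives $v_{2,0}\cdot[a_2-a_0]=0$; since the strong genericity $a_2-a_0<p-4$ makes $[a_2-a_0]\in\{4,\dots,p-2\}$ a unit in $\F\subseteq\Rbar$, we conclude $v_{2,0}=0$. Setting $v_{2,0}=0$, the $e_2$-equation then reads off the formula for $P_{2,0}$.

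The converse direction, that $v_{2,0}=0$ suffices, follows by direct verification: with the $n_{ij}$ just constructed, the Leibniz rule and descent-data compatibility are built in, axiom (b) is exactly the $\Fil^2$-membership identity used above, and axiom (c) reduces to the very equations that determined the $n_{ij}$. The main technical obstacle is the careful bookkeeping of $u$-powers when converting between $(e_0,e_1,e_2)$- and $(f_0,f_1,f_2)$-coordinates: the strong-genericity assumption is used throughout both to ensure that integer factors of the form $[a_i-a_j]$ and $[a_i-a_j]-1$ are units in $\F$, and to keep the $u$-exponents in ranges where $\varphi(u^e)=0$ annihilates precisely the intended terms while leaving the diagnostic $u^{e+[a_2-a_0]}$-coefficient on $e_2$ genuinely outside $\Fil^2\cM$ unless $v_{2,0}$ vanishes.
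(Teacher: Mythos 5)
Your proposal is correct and follows essentially the same strategy as the paper: apply the descent-data constraint, then axioms $u^{e}N(\Fil^2\cM)\subseteq\Fil^2\cM$ and $\varphi_2(u^{e}N(\cdot))=N(\varphi_2(\cdot))$ to $f_2,f_1,f_0$ in turn, and read off the constraint on $v_{2,0}$ from the $e_2$-coordinate of the $j=0$ computation, using that $[a_2-a_0]$ is a unit by strong genericity. The one minor difference is that you start from an arbitrary $N$ (constrained only by descent data) and derive the strictly lower-triangular nilpotent form from the axioms, whereas the paper imports this form directly by citing \cite[Lemma 3.3.2]{EGH}; this makes your argument slightly more self-contained but otherwise the computations coincide.
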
 

The rest of this section is devoted to the proof of Proposition \ref{essential image}. From now on, we fix a quasi-Breuil module $\cM$ over $\overline{R}$ with descent data $\tau$ in ordinary form. To lighten notations, we write $\barS=\barS_R=\overline{R}[u]/(u^{ep})$ and, as in the proof of Proposition \ref{diagonalize mod p}, we write $\barS_0$ to denote the $\omega^0$-isotypical component of $\barS$.
 As the monodromy operator is compatible with the descent data, we deduce from \cite{EGH}, Lemma 3.3.2 that the action of a monodromy $N$ on $\cM$ is described by
$$
N=\begin{pmatrix}
0&0&0\\
u^{[a_1-a_0]}P_{1,0}(u^e)&0&0\\
u^{[a_2-a_0]}P_{2,0}(u^e)&u^{[a_2-a_1]}P_{2,1}(u^e)&0
\end{pmatrix}
$$
in the basis $\un{e}$ for some $P_{i,j}(u^e)\in \barS_0$.  

Recall that the monodromy operator $N$ satisfies
\begin{itemize}
	\item[$i)$] $u^e N(f_{i})\in \Fil^2\cM$;
	\item[$ii)$] $\phz_2(u^eN(f_{i}))=N(\phz_2(f_{i}))$
\end{itemize}
and the usual Leibnitz relation
\begin{equation}
\label{leibenitz}
N(Q(u)\cdot x)=-u\frac{\partial}{\partial u}(Q(u))x+ Q(u) N(x)
\end{equation}
for any $Q(u)\in\barS$, $x\in \cM$. We will write $N_{\barS}$ to denote the monodromy $-u\frac{\partial}{\partial u}$ on $\barS$.

\begin{proof}[Proof of Propostion \ref{essential image}]

Since $\cM$ is in ordinary form, we can define the subquotients in $\cM_{i,j}$ as we did for $\barcM_{i,j}$. Let us first consider $\cM_{2,1}$. A simple computation, using the Leibniz relation (\ref{leibenitz}) gives 
\begin{eqnarray*}
N(f_1)=-ef_1+u^{e+[a_2-a_1]}\left(\vectln{lll}{0\\0\\P_{2,1}(u^e)}-[a_2-a_1]v_{2,1}\vectln{lll}{0\\0\\1}\right)
\end{eqnarray*}
hence $u^eN(f_1)=u^ef_1+u^{[a_2-a_1]}(P_{2,1}(u^e)-[a_2-a_1]v_{2,1}) f_2 \in \Fil^2\cM$.

Using (\ref{leibenitz}) and noticing that $\phz(Q(u^e))=Q(0)$ for any $Q(u^e)\in \barS_0$, we further obtain
\begin{eqnarray*}
\phz_2(u^eN(f_1))&=&u^{[a_2-a_1]}u^{e[a_2-a_1]}\alpha_2(P_{2,1}(0)-[a_2-a_1]v_{2,1})e_2,
\end{eqnarray*}
and as $\cM$ is ordinary we have
\begin{eqnarray*}
N(\phz_2(f_1))&=&\alpha_1u^{[a_2-a_1]}P_{2,1}(u^e)e_2.
\end{eqnarray*}
Hence, from $ii)$ we deduce $P_{2,1}(u^e)=-\alpha_2\alpha_1^{-1}[a_2-a_1]v_{2,1}u^{e[a_2-a_1]}$. By a similar argument with $\cM_{1, 0},$ we deduce that $P_{1,0}(u^e)=-\alpha_1\alpha_0^{-1}v_{1,0}[a_1-a_0]u^{e[a_1-a_0]}$.  Note that both $P_{2,1}(u^e)$ and $P_{1,0}(u^e)$ are in $\Fil^2 S$ by the genericity condition. 

We now consider $N(f_0)$. A laborious manipulation but without difficulties, using the Leibniz relation and the definition of $f_1$ provides us with the following:
\begin{eqnarray*}
u^eN(f_0)&=&u^e\left(\vectln{lll}{0\\u^{[a_1-a_0]}P_{1,0}(u^e)\\u^{[a_2-a_0]}P_{2,0}(u^e)}+
N_{\barS}(u^{[a_1-a_0]}v_{1,0})\vectln{lll}{0\\1\\0}+\right.\\
&&\left.\qquad\qquad+ u^{[a_1-a_0]}v_{1,0}\vectln{lll}{0\\0\\u^{[a_2-a_1]}P_{2,1}(u^e)}+N_{\barS}(u^{[a_2-a_0]}(v_{2,0}+u^ev'_{2,0}))\vectln{lll}{0\\0\\1}\right)\\
&\in&u^{[a_1-a_0]}\left(P_{1,0}(u^e)-[a_1-a_0]v_{1,0}\right)f_1+
u^{[a_2-a_0]}(1-[a_2-a_0])v'_{2,0}f_2+\\
&&
+u^e\left(u^{[a_2-a_1]+[a_1-a_0]}\left(-v_{2,1}P_{1,0}(u^e)+v_{1,0}v_{2,1}[a_1-a_0]\right)\right.\\
&&\qquad\qquad\qquad\qquad\qquad\left.
+u^{[a_2-a_0]}\left(P_{2,0}(u^e)-v_{2,0}[a_2-a_0]\right)\right)e_2+u^e\Fil^2\cM
\end{eqnarray*}
(where we used that $u^eP_{2,1}(u^e)e_2\in u^e\Fil^2\cM$).

Since we assume $e>a_2-a_1,a_1-a_0>0$ we have $[a_2-a_1]-[a_1-a_0]=e+[a_2-a_0]$. Therefore
\begin{eqnarray*}
u^eN(f_0)&\in&u^{[a_1-a_0]}\left(P_{1,0}(u^e)-[a_1-a_0]v_{1,0}\right)f_1+\\
&&\, +u^{[a_2-a_0]}\left(-v_{2,1}P_{1,0}(u^e)+v_{1,0}v_{2,1}[a_1-a_0]+(1-[a_2-a_0])v'_{2,0}\right)f_2+\\
&&\,\, 
+u^eu^{[a_2-a_0]}\left(P_{2,0}(u^e)-v_{2,0}[a_2-a_0]\right)e_2+u^e\Fil^2\cM
\end{eqnarray*}
so that Griffiths' transversality is verified if and only if
\begin{equation}
\label{condition griffiths}
P_{2,0}(u^e)-v_{2,0}[a_2-a_0]\in u^e\barS_0.
\end{equation} 

Let us write $P_{2,0}(u^e)=v_{2,0}[a_2-a_0]+u^e\widetilde{P}_{2,0}(u^e)$ for some $\widetilde{P}_{2,0}(u^e)\in \barS_0$. We now have 
\begin{eqnarray*}
u^eN(f_0)&\in&u^{[a_1-a_0]}\left(P_{1,0}(u^e)-[a_1-a_0]v_{1,0}\right)f_1+\\
&&\,+u^{[a_2-a_0]}\left(-v_{2,1}P_{1,0}(u^e)+v_{1,0}v_{2,1}[a_1-a_0]+(1-[a_2-a_0])v'_{2,0}+\widetilde{P}_{2,0}(u^e)\right)f_2 \\
&&\,\, +u^e\Fil^2\cM
\end{eqnarray*}
hence, imposing condition $ii)$, we deduce that:
\begin{eqnarray*}
\alpha_0P_{2,0}(u^e)&=&u^{e[a_2-a_0]}\alpha_2\left(-v_{2,1}P_{1,0}(0)+v_{1,0}v_{2,1}[a_1-a_0]+(1-[a_2-a_0])v'_{2,0}+\widetilde{P}_{2,0}(0)\right).
\end{eqnarray*}
In particular, $P_{2,0}(u^e)\in u^{2e}\barS_0$ by the genericity assumption. Returning to (\ref{condition griffiths}), we conclude that
\begin{eqnarray*}
\left[a_2-a_0\right]v_{2,0}&=&0;\\
P_{2,0}(u^e)&=&-\alpha_2\alpha_0^{-1}\left(v'_{2,0}([a_2-a_0]-1)-v_{1,0}v_{2,1}[a_1-a_0]\right)u^{e[a_2-a_0]}.
\end{eqnarray*} 

This proves the only if direction. 

For the converse, it is enough to notice that, by virtue of the previous computations, 
a monodromy $N:\cM\rightarrow \cM$ defined by 
\begin{eqnarray*}
N(e_0)&\defeq& -\alpha_1\alpha_0^{-1}v_{1,0}[a_1-a_0]u^{e[a_1-a_0]}e_1+\\
&&
\,\quad+\alpha_2\alpha_0^{-1}\left(v_{1,0}v_{2,1}[a_1-a_0]-v'_{2,0}([a_2-a_0]-1)\right)u^{e[a_2-a_0]}e_2,\\
&&
\\
N(e_1)&\defeq& -\alpha_2\alpha_1^{-1}v_{2,1}[a_2-a_1]u^{e[a_2-a_1]}e_2,\\
N(e_2)&\defeq&0
\end{eqnarray*}
verifies both conditions $i)$ and $ii)$, and the quasi-Breuil module $\cM$ is thus equipped with the structure of a Breuil module.
\end{proof}

\begin{rmk}
Following the same technique, it is possible to determine the monodromy operator even when the type $\tau$ lies in the \emph{upper alcove}. More precisely, if $\tau=\omega^{a_2}\oplus\omega^{a_1}\oplus\omega^{a_0}$, where $(a_2,a_1,a_0)\in X^{*}_+(\rT)$ is a restricted, generic dominant weight in the upper alcove, then an ordinary quasi-Breuil module as in (\ref{shape ordinary}) is endowed with a monodromy operator if and only if $[a_2-a_0]v_{2,0}=[a_1-a_0]v_{1,0}v_{2,1}$. This question is further explored in \cite{LLLM}.
\end{rmk}

\section{Potentially crystalline deformation rings}

In this section we explicitly compute certain potentially crystalline deformation rings with niveau 1 type. The main result is Theorem \ref{main}. We recall that $\rhobar: G_{\qp}\rightarrow \mathrm{GL}_3(\F)$ is ordinary, of the form
$$
\rhobar\vert_{I_{\qp}}\cong \maqn{lll}{{\omega^{a_2+2}}&{\ast_1}&{\ast}\\{0}&{\omega^{a_1+1}}&{\ast_2}\\{0}&{0}&{\omega^{a_0}}}
$$
verifying the genericity hypothesis $a_1-a_0,a_2-a_1>3$, $a_2-a_0<p-4$. As before we fix the principal series tame type $\tau\defeq \teich{\omega}^{a_2}\oplus\teich{\omega}^{a_1}\oplus\teich{\omega}^{a_0}$.

If $R$ is a complete local Noetherian $\cO_E$-algebra, recall from section \ref{integralp-adicHT} the following categories of semilinear algebra data:
\begin{equation*}
\xymatrix{
\RModdd[2]\ar[d]_{\otimes_{\cO_E}\F}\ar[r]&\RModddN[2]\ar^{\otimes_{\cO_E}\F}[d]\\
\RBrModdd[2]\ar[r]&\RBrModddN[2]
}
\end{equation*}
endowed with faithful, covariant functors $\mathrm{T}_{x}^{2}$ towards Galois (with $x\in\{\mathrm{st},\mathrm{qst}\}$), verifying the natural, evident compatibilities with respect to the forgetful, restriction and mod-$p$ reduction functors. 



\begin{defi}
\label{def gauge}
Let $\cM\in \OEModdd[2]$ be a strongly divisible lattice  of type $\tau$ such that $\Tst^{2}({\mathcal{M}})\otimes \F\cong \rhobar$. A basis $\underline{e}=(e_0,e_1,e_2)$ on $\cM$ is said to be a \emph{gauge basis} for $\cM$ if $\underline{e}$ is compatible with the descent data and there exists $\underline{f}=(f_0,f_1,f_2)\in\Fil^2\cM$ such that
\begin{eqnarray*}
\label{str div mod}
\mathrm{Mat}_{\underline{e}}([f_0,\,f_1,\,f_2])=\maqn{ccc}{{1}&{0}&{0}\\u^{\left[ a_1-a_0\right]}{x}&E(u)&0\\u^{\left[ a_2-a_0\right]}({y}'+E(u){y})&E(u)u^{\left[ a_2-a_1\right]}{z}&u^{2e}}
\end{eqnarray*}
and $\mathrm{Mat}_{\underline{e},\underline{f}}(\varphi_2)=\mathrm{Diag}({\alpha}_0,{\alpha}_1,{\alpha}_2),$ where ${x},y,y',z\in \Oe$ and ${\alpha}_i\in \Oe^{\times}$.

If $R$ is a complete local Noetherian $\Oe$-algebra we define in an analogous way the notion of \emph{gauge basis} for modules $\cM\in \RModdd[2],\,\barcM\in \RBrModdd[2]$ $($resp. $\cM\in \RModddN[2],\,\barcM\in \RBrModddN[2])$ of type $\tau$ such that $\Tst^{2}(\cM)\otimes _{R}\F\cong \rhobar$ $($resp. $\Tqst^{2}(\cM)\otimes _{R}\F\cong \rhobar\vert_{G_{(\Qp)_{\infty}}}).$

A morphism of strongly divisible lattices with gauge basis $(\cM_1,\un{e}_1)\rightarrow (\cM_2,\un{e}_2)$ is defined as a morphism $\cM_1\rightarrow \cM_2$ in $\OEModdd[2]$ such that $e_{i,1}\mapsto e_{i,2}$ for $i\in\{0,1,2\}$. We have the analogous definition for a morphism of $R$-valued $($quasi$)$-strongly divisible lattices and $($quasi$)$-Breuil modules, where $R$ is a complete local Noetherian $\Oe$-algebra $($resp. $\F$-algebra$)$.
\end{defi}

We record the following:
\begin{lem}
\label{lemchange}
Let $\cM\in \OEModdd[2]$ be a strongly divisible lattice  of type $\tau$ such that $\Tst^{2}({\mathcal{M}})\otimes \F\cong \rhobar$. Then $\cM$ is endowed with a gauge basis $\underline{e}$. Moreover if $\underline{e},\,\underline{e}'$ are two gauge bases on $\cM$, then there exists $t\in \bT_3(\Oe)$ such that $\underline{e}=\underline{e}'\cdot t$.

If $R$ is a complete local noetherian $\Oe$-algebra we have the evident, analogous statement for Breuil modules and quasi-Breuil modules with $R$-coefficients.
\end{lem}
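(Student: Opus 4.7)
The existence assertion is a direct consequence of Theorem \ref{phifilr}: viewing $\cM \in \OEModdd[2]$ as a quasi-strongly divisible lattice with $\Tqst^{2}(\cM) \otimes \F \cong \rhobar\vert_{G_{(\Qp)_{\infty}}}$, the theorem provides a framed basis $\underline{e}^{(\infty)}$ and framed generators $\underline{f}^{(\infty)}$ for $\Fil^2 \cM / (\Fil^p \SBr \cdot \cM)$ in essentially gauge form, which after a minor adjustment of the last generator yields a gauge basis in the sense of Definition \ref{def gauge}. The Breuil-module version is Proposition \ref{diagonalize mod p}; for $R$-coefficients the same $p$-adic convergence argument applies \emph{mutatis mutandis} (cf.~the remark after Theorem \ref{phifilr}).

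\textbf{Uniqueness.} Let $(\underline{e},\underline{f})$ and $(\underline{e}',\underline{f}')$ be two gauge-basis structures on $\cM$ and write $\underline{e}' = \underline{e} \cdot A$. Compatibility of both bases with the framing on $\tau$ forces $A \in \rGL_{dd,3}(\SBr)$, so that $A_{ii} \in \sR$ and $A_{ij} \in u^{[a_i - a_j]} \sR$ for $i \neq j$. By Proposition \ref{bijection ordrer preserving} the filtration $\cM_2 \subset \cM_{2,1} \subset \cM$ is intrinsic (pulled back from the Galois subrepresentation filtration of $\rhobar$), and the gauge form forces $e_i$ and $e_i'$ to generate the same rank-one subquotient; hence $A_{01} = A_{02} = A_{12} = 0$, i.e.~$A$ is lower triangular. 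Restricting to the rank-one piece $\cM_{i,i}$, the gauge Frobenius relation combined with $e_i' = A_{ii} e_i$ and the $\varphi$-semilinearity of $\varphi_2$ produces the identity $\varphi(A_{ii}) \alpha_i = \alpha_i' A_{ii}$ in $\sR$. Expanding $A_{ii}$ in the topological basis $\{u^{ke}/k!\}_{k \geq 0}$ and using that $\varphi(u^{ke}/k!) \in p \sR$ for $k \geq 1$ (a consequence of Lemma \ref{lemcalcul1 new}), a $p$-adic contraction argument forces $A_{ii}$ to coincide with its constant term, an element of $\cO_E^{\times}$, and concurrently yields $\alpha_i' = \alpha_i$.

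\textbf{Main obstacle.} The hardest part is to show that the strictly lower-triangular entries $A_{10}, A_{20}, A_{21}$ vanish. For this, I will combine the change-of-basis relation $AV' = VB$ (the analog in the strongly divisible setting of Lemma \ref{lemmaprimo new}, with $B \in \rGL_{dd,3}(\SBr)$) with the Frobenius compatibility $\varphi(B) = \lambda^{-1} A \lambda'$, where $\lambda = \mathrm{Diag}(\alpha_0, \alpha_1, \alpha_2)$. Since $A$ and $B$ share the same scalar diagonal, these equations decouple into a system in the strictly lower-triangular part of $A$; a $p$-adic convergence argument in the spirit of Propositions \ref{even new}--\ref{odd new}, using the strong genericity hypothesis $(\ref{strong genericity})$ to control the $u^{e[a_i-a_j]}$ contraction factors, forces each strictly lower-triangular entry to vanish modulo arbitrary powers of $p$, and hence to be zero by $p$-adic separatedness of $\SBr$. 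The Breuil-module and $R$-valued statements follow by the same strategy, replacing the $p$-adic convergence by an $\mathfrak{m}_R$-adic (or, in characteristic $p$, a $u$-adic/$E(u)$-adic) convergence.
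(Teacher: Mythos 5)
Your overall route is the paper's, but you make explicit a step that the paper's own proof simply asserts without justification. The paper takes the existence from Theorem \ref{phifilr} (as you do), and for uniqueness immediately writes "given two gauge bases $\underline{e}, \underline{e}'$, one has $\widehat{e}'_\bullet = s_\bullet \widehat{e}_\bullet$'' — i.e.\ it treats it as obvious that the change of basis is diagonal — and then only argues that $\varphi(s_\bullet)\alpha_\bullet = \alpha'_\bullet s_\bullet$ forces $s_\bullet \in \cO_E^\times$. You instead decompose the change-of-basis matrix into upper-triangular, diagonal, and strictly lower-triangular parts and kill each in turn. For the upper-triangular part, your appeal to the intrinsic filtration $\cM_2 \subset \cM_{2,1} \subset \cM$ is the right idea; note though that Proposition \ref{bijection ordrer preserving} is stated only for quasi-Breuil modules over $\Rbar$, so for a strongly divisible lattice over $\cO_E$ you should invoke Theorem \ref{liu-quasi} (plus ordinarity and strong genericity to pin down the unique rank-one Galois sublattice) to get the intrinsic filtration. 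Your diagonal step is the same calculation as the paper's and is fine.

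The genuinely new content you supply, which the paper omits entirely, is showing the strictly lower-triangular part vanishes. This is the delicate point: a unipotent lower-triangular change of basis $C$ can be made to preserve the gauge form of $V$ by an appropriate change of generators $D$, but it cannot simultaneously preserve the diagonal form of $\Mat_{\un{e},\un{f}}(\varphi_2)$ unless $C$ is diagonal. Your sketch — combining $V' = C^{-1}VD$ with $\varphi(D) = \lambda^{-1}C\lambda'$ and running a $p$-adic contraction using the strong genericity to control the $u^{e[a_i-a_j]}$ factors — is the correct mechanism, but it is left as a plan rather than a proof: one has to actually write out the coupled system (in rank two it reduces to an equation of the form $u^{2e}\tilde d - c\,u^{e[a_2-a_1]}E(u)\varphi(\tilde d) = c'E(u)$ with $\tilde d \in \sR$, $c' \in \cO_E$, $c\in\cO_E^\times$) and verify by peeling off $\Fil^\bullet\sR$-graded pieces that the only solution is $\tilde d = 0$. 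This is plausible and in the spirit of Propositions \ref{even new}--\ref{odd new}, but you have not carried it out, so the proposal should be regarded as a correct strategy with the hardest step outlined rather than executed — which, to be fair, is also the status of the paper's own proof, which skips the step altogether.
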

\begin{proof}
The fact that $\cM$ is endowed with a gauge basis is immediate from Theorem \ref{phifilr}. Moreover, given two gauge basis $\underline{{e}},\,\underline{{e}}'$ for ${\mathcal{M}}$, one has 
$$
\widehat{e}'_{\bullet}=s_{\bullet}\widehat{e}_{\bullet}
$$ 
for $\bullet\in\{0,1,2\}$)  and $s_{\bullet}\in (S)_{\teich{\omega}^0}^{\times}$ (the latter denoting the invertible element in the $\teich{\omega}^0$-isotypical component of the Breuil ring $S$).

As $\underline{\widehat{e}}'$, $\underline{\widehat{e}}$ are gauge basis, the effect of the change of basis on the Frobenius action gives
$$
\alpha_{\bullet}'=\frac{\varphi(s_{\bullet})}{s_{\bullet}}\alpha_{\bullet}
$$
for $\bullet\in\{0,b,a\}$. As $x_{\bullet}\defeq\alpha_{\bullet}'\alpha_{\bullet}^{-1}\in \Oe^{\times}$ one deduces that $s_{\bullet}\in S^{\varphi=x_{\bullet}}=\Oe$.

The statement for Breuil modules and quasi-Breuil modules with $\Rbar$ coefficients is deduced following the analogous argument, using Proposition \ref{diagonalize mod p}.
\end{proof}





In what follows, we need the unicity of Breuil modules $\barcM$ verifying $\Tst^2(\barcM)=\rhobar$. The following proposition shows that this is the case when the descent datum on $\barcM$ is sufficiently generic with respect to the inertial weights of $\rhobar^{\mathrm{ss}}$. 
\begin{prop}
\label{unicity1}
Let $\barcM_1,\barcM_2\in\FBrModdd[2]$ be Breuil modules with descent data of type $\tau$.
Assume that $\Tst^{2}(\barcM_i)\cong \rhobar$ for $i\in\{1,2\}$, where $\rhobar$ is ordinary Fontaine-Laffaille and strongly generic as in (\ref{strong genericity}).

Then we have an isomorphism of Breuil modules $\barcM_1\stackrel{\sim}{\longrightarrow}\barcM_2$.
\end{prop}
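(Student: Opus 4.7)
The strategy is to reduce uniqueness to a rigidity property of the étale $(\varphi,\F(\!(\underline{\pi})\!))$-module with descent data attached to $\rhobar\vert_{G_{(\Qp)_\infty}}$, using the strong genericity of the inertial weights to exclude any spurious change of basis.

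First, I apply Proposition \ref{diagonalize mod p} to each $\barcM_i$ to produce a framed basis $\un{e}^{(i)}$ and a framed system of generators $\un{f}^{(i)}$ of $\Fil^2\barcM_i$ in ordinary form (Definition \ref{ordform}), with parameters $(v^{(i)}_{1,0},v^{(i)}_{2,1},v^{(i)}_{2,0},(v')^{(i)}_{2,0},\alpha_0^{(i)},\alpha_1^{(i)},\alpha_2^{(i)})$. Since each $\barcM_i$ is a genuine Breuil module, i.e.~is equipped with a monodromy operator, Proposition \ref{essential image} forces $v^{(i)}_{2,0}=0$ and the monodromy $N^{(i)}$ is then canonically determined by the remaining parameters.

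Second, I compare $\barcM_1$ and $\barcM_2$ through their images under $M_{\F(\!(\underline{\pi})\!)}$. The commutativity of the diagram in Proposition \ref{proposition comparison}, combined with the anti-equivalence between $\F$-$\mathfrak{Mod}_{\mathrm{dd}}$ and $\mathrm{Rep}_{\F}(G_{(\Qp)_\infty})$ and the hypothesis $\Tst^{2}(\barcM_i)\cong\rhobar$, yields an isomorphism of étale $(\varphi,\F(\!(\underline{\pi})\!))$-modules with descent data
\begin{equation*}
M_{\F(\!(\underline{\pi})\!)}(\barcM_1^{\ast})\,\stackrel{\sim}{\longrightarrow}\,M_{\F(\!(\underline{\pi})\!)}(\barcM_2^{\ast}).
\end{equation*}
By Lemma \ref{lemma lawdd 1}, each Frobenius matrix on $M_{\F(\!(\underline{\pi})\!)}(\barcM_i^{\ast})$, written in a framed basis, takes the explicit form $\widehat{V}_i^{\,t}(\widehat{A}_i^{-1})^t$, where $V_i$ and $A_i$ are the filtration and Frobenius matrices read off from the ordinary form in Step 1.

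Third, I analyse the change of basis realising the isomorphism above. Descent-data compatibility restricts it to $\rGL_{dd,3}(\F[[\underline{\pi}]])$, and the strong genericity (\ref{strong genericity}) of the shifts $[a_i-a_j]$ then forces its diagonal entries to be constants in $\F^{\times}$ and its off-diagonal entries to be confined to non-overlapping $\underline{\pi}$-adic ranges dictated by the descent characters. After possibly twisting the gauge basis $\un{e}^{(2)}$ by an element of $\bT_3(\F)$ (allowed by Lemma \ref{lemchange}), the parameters of $\barcM_1$ and $\barcM_2$ agree. Sending $\un{e}^{(1)}\mapsto\un{e}^{(2)}$ then defines an $\barS$-linear isomorphism compatible with filtration, Frobenius and descent data; compatibility with monodromy is automatic by Proposition \ref{essential image}, since $N$ is determined by the remaining data.

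The main obstacle is the explicit matrix matching in Step 3: one must carefully track the framing, the duality and Tate twist involved in $\Tst^{2}=(\Tst^{\ast})^{\vee}\otimes\omega^{2}$, and the precise $\underline{\pi}$-adic supports of the off-diagonal entries, in order to confirm that strong genericity indeed eliminates every change of basis beyond the diagonal torus. Once this is established, the desired isomorphism $\barcM_1\cong\barcM_2$ follows formally.
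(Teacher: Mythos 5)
Your setup (Steps 1--2) matches the paper: pass to ordinary form via Propositions \ref{diagonalize mod p} and \ref{essential image}, then compute the Frobenius matrix on $M_{\F(\!(\underline{\pi})\!)}(\barcM_i^{\ast})$ via Lemma \ref{lemma lawdd 1}. The gap is entirely in your Step 3, and it is the load-bearing step, not a formality.

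First, an isomorphism of \'etale $(\varphi,\F(\!(\underline{\pi})\!))$-modules with descent data has a matrix over the field $\F(\!(\underline{\pi})\!)$, not over the ring $\F[[\underline{\pi}]]$; descent-data compatibility only constrains each entry to be $\underline{\pi}^{[a_i-a_j]}$ times an element of the $\omega^0$-isotypical part of $\F(\!(\underline{\pi})\!)$, which still allows poles. Your assertion that ``descent-data compatibility restricts it to $\rGL_{dd,3}(\F[[\underline{\pi}]])$'' is therefore false as stated. Second, even granting integrality, the claim that strong genericity alone forces the change of basis to be diagonal (modulo $\bT_3(\F)$) is precisely the rigidity statement you would need to prove, and you acknowledge you have not proved it.

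The paper closes this gap by a further reduction you do not perform: after untwisting by $\underline{\pi}^{a_i}$ the \'etale $\varphi$-module is shown to descend from $\F(\!(\underline{\pi})\!)$ to $\F(\!(\underline{p})\!)$ and then, via Proposition \ref{proposition comparison}, to equal $\mathcal{F}(M_i)$ for an explicit Fontaine-Laffaille module $M_i$ in Hodge--Tate weights $(0,a_1-a_0+1,a_2-a_0+2)$. Full faithfulness of $\mathrm{Res}\circ\Tcris^{*}$ then gives $M_1\cong M_2$, and the rigidity --- that any isomorphism compatible with both the Hodge filtration and the submodule filtration of $M_i$ is diagonal --- is exactly \cite{HM}, Lemma 2.1.7. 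That lemma is the substitute for the direct $\underline{\pi}$-adic support analysis you wave at; without it, or something equivalent, the argument does not conclude. To repair your proof you should either carry out the Fontaine-Laffaille reduction as the paper does, or directly prove the integrality and diagonality of the change of basis in $\F(\!(\underline{\pi})\!)$-coefficients, which is genuinely nontrivial since distinct $\F[[\underline{\pi}]]$-lattices in the same \'etale $\varphi$-module can a priori exist.
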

\begin{proof}
Let $\barcM\in\FBrModdd[2]$ be a Breuil module with descent data of type $\tau$. 

By Proposition \ref{essential image} we have a gauge basis $\un{e}$ and a system of generators $\un{f}$ for $\Fil^2\barcM$ such that:
\begin{eqnarray*}
\mathrm{Mat}_{\underline{e}}(\mathrm{Fil}^2\barcM_0)=\small\maqn{ccc}{{1}&{0}&{0}\\u^{\left[ a_1-a_0\right]}x&u^e&0\\u^{e}u^{\left[ a_2-a_0\right]}y&u^eu^{\left[ a_2-a_1\right]}z&u^{2e}},\,\mathrm{Mat}_{\underline{e},\underline{f}}(\varphi_2)=\mathrm{Diag}(\alpha_0,\alpha_1,\alpha_2)
\end{eqnarray*}
for some $x,y,z\in \F$, $\alpha_i\in \F^{\times}$.
By Lemma \ref{lemma lawdd 1} the $(\F(\!(\underline{\pi})\!),\phi)$-module $\mathfrak{M}\defeq M_{\Fp(\!(\un{\pi})\!)}(\barcM^{\ast})$ is described by
$$
\mathrm{Mat}_{\underline{\mathfrak{e}}}(\phi)=
\maqn{ccc}{{1}&\underline{\pi}^{\left[ a_1-a_0\right]}x&\underline{\pi}^{e+\left[ a_2-a_0\right]}y\\
0&\underline{\pi}^e&\underline{\pi}^{e+\left[ a_2-a_1\right]}z\\0&0&\underline{\pi}^{2e}}\mathrm{Diag}(\alpha_0^{-1},\alpha_1^{-1},\alpha_2^{-1}).
$$

By considering the change of basis $\underline{\mathfrak{e}}'\defeq \left(\underline{\pi}^{a_0}\mathfrak{e}_0,\underline{\pi}^{a_1}\mathfrak{e}_1,\underline{\pi}^{a_2}\mathfrak{e}_2\right)$ (i.e., by considering the $\omega^{a_0}$-isotypical component of $\mathfrak{M}$), we obtain
$$
\mathrm{Mat}_{\underline{\mathfrak{e}'}}(\phi)=
\maqn{ccc}{{\underline{\pi}^{a_0}}&\underline{\pi}^{e(a_1+1)}x&\underline{\pi}^{e(a_2+2)}y\\
0&\underline{\pi}^{e(a_1+1)}&\underline{\pi}^{e(a_2+2)}z\\0&0&\underline{\pi}^{e(a_2+2)}}\mathrm{Diag}(\alpha_0^{-1},\alpha_1^{-1},\alpha_2^{-1}).
$$ 
which shows that $\mathfrak{M}$ is the base change to $\F(\!(\underline{\pi})\!)$ of the $(\F(\!(\underline{p})\!),\phi)$-module $\mathfrak{M}_0$ defined by
$$
\mathrm{Mat}_{\underline{\mathfrak{f}}}(\phi)=
\maqn{ccc}{\alpha_0^{-1}&\alpha_1^{-1}x&\alpha_2^{-1}y\\
0&\alpha_1^{-1}&\alpha_2^{-1}z\\0&0&\alpha_2^{-1}}\mathrm{Diag}(\underline{p}^{a_0},\underline{p}^{a_1+1},\underline{p}^{a_2+2}).
$$ 
for an appropriate basis $\underline{\mathfrak{f}}$ on $\mathfrak{M}_0$. Hence, the $(\F(\!(\underline{p})\!),\phi)$-module $\mathfrak{M}_0(-a_0)$ defined by
$$
\mathrm{Mat}_{\underline{\mathfrak{f}}}(\phi)=
\maqn{ccc}{\alpha_0^{-1}&\alpha_1^{-1}x&\alpha_2^{-1}y\\
0&\alpha_1^{-1}&\alpha_2^{-1}z\\0&0&\alpha_2^{-1}}\mathrm{Diag}(1,\underline{p}^{a_1-a_0+1},\underline{p}^{a_2-a_0+2})
$$ 
verifies $\Hom(\mathfrak{M}_0(-a_0),\Fp(\!(\underline{p})\!)^s)\cong \rhobar\otimes \omega^{-a_0}\vert G_{(\qp)_{\infty}}$.

By an evident change of basis and Proposition \ref{proposition comparison} we deduce that 
$\mathfrak{M}_0(-a_0)=\mathcal{F}(M)$ where $M$ is the Fontaine-Laffaille module in Hodge-Tate weights $(0,a_1-a_0+1,a_2-a_0+2)$ and whose Frobenii are described, in an appropriate basis, by
$$
\mathrm{Mat}(\phi_{\bullet})=\maqn{ccc}{1&x&y\\
0&1&z\\0&0&1}\mathrm{Diag}(\alpha_0^{-1},\alpha_1^{-1},\alpha_2^{-1}).
$$

We now specialize to our situation: for $i\in\{1,2\}$ we have:
\begin{eqnarray*}
\mathrm{Mat}_{\underline{e}}(\mathrm{Fil}^2\barcM_i)=\small\maqn{ccc}{{1}&{0}&{0}\\u^{\left[ a_1-a_0\right]}x_i&u^e&0\\u^{e}u^{\left[ a_2-a_0\right]}y_i&u^eu^{\left[ a_2-a_1\right]}z_i&u^{2e}},\,\mathrm{Mat}_{\underline{e},\underline{f}}(\varphi_2)=\mathrm{Diag}(\alpha_0,\alpha_1,\alpha_2)
\end{eqnarray*}
for some $x_i,y_i,z_i\in \F$ (and the $\alpha_i\in \F^{\times}$ uniquely determined by $\rhobar_i(\Frob_p)$) we deduce the Fontaine-Laffaille modules $M_i$, in Hodge-Tate weights $(0,a_1-a_0+1,a_2-a_0+2)$ Frobenii
$$
\mathrm{Mat}(\phi_{\bullet})=\maqn{ccc}{1&x_i&y_i\\
0&1&z_i\\0&0&1}\mathrm{Diag}(\alpha_0^{-1},\alpha_1^{-1},\alpha_2^{-1}).
$$

As have $\Tst^2(\barcM_1)\cong\Tst^2(\barcM_2)$ by assumption we deduce that the Fontaine-Laffaille modules above are isomorphic. By \cite{HM}, Lemma 2.1.7, any change of basis on $M_i$ which is compatible with both the Hodge and the submodule filtration on $M_i$ is diagonal; in other words, one has
$$
\maqn{ccc}{1&x_1&y_1\\
0&1&z_1\\0&0&1}=\maqn{ccc}{1&x_2&y_2\\
0&1&z_2\\0&0&1}\mathrm{Diag}(\lambda,\mu,\nu)
$$
for some $\lambda,\mu,\nu\in \F^{\times}$. We deduce an isomorphism of Breuil modules with gauge basis $\cM_1\stackrel{\sim}{\longrightarrow}\cM_2$ defined by $\un{e}_1\mapsto\un{e}_2\cdot\mathrm{Diag}(\lambda,\mu,\nu)$.
\end{proof}

We fix a pair $(\barcM,\underline{e})$ where $\barcM\in\FBrModdd[2]$ has type $\tau$ and verifies $\Tst^2(\barcM)\cong\rhobar$ and $\underline{e}$ is a  gauge basis on it. The basic setup will be similar to \S 7.4 of \cite{EGS}. We now introduce the following deformation rings. 

\begin{itemize}
	\item[$1)$] $R^{\Box}_{\rhobar}\defeq R^{\Box, (0,1,2), \mathrm{cris}, \tau}_{\rhobar}$ is the framed potentially crystalline deformation ring of $\rhobar$, with Galois type $\tau$ and $p$-adic Hodge type $(0,1,2)$;
	\item[$2)$] $R^{\tau}_{\barcM, \infty}$  represents the deformation functor of pairs $(\cM,\underline{\widehat{e}})$ where $\cM$ is a quasi-strongly divisible module lifting $\barcM$ and $\underline{\widehat{e}}$ is a gauge basis on $\cM$ reducing to the gauge basis $\underline{e}$ on $\barcM$.
	\item[$3)$] $R^{\tau}_{\barcM}$  represents the deformation functor of pairs $(\cM,\underline{\widehat{e}})$ as in $2)$ where $\cM$ is now a strongly divisible lattice.
	\item[$4)$] $R^{\Box,\tau}_{\barcM}$ represents deformation functor of triples $(\cM,\underline{\widehat{e}},\rho)$ where the pair $(\cM,\widehat{\un{e}})$ is as in $3)$ and $\rho\cong \Tst^2(\cM)$ (i.e., the pair $(\cM,\underline{\widehat{e}})$ comes with a framing on $\Tst^2(\cM)$);
	\item[$5)$] $R^{\Box}_{\barcM}$ parameterizing pairs $(\cM,\rho)$ where $\cM$ is as in $3)$ and $\rho\cong \Tst^2(\cM)$ (i.e. we fix a framing on $\Tst^2(\cM)$).
\end{itemize}

The relationship between the various deformation rings is summarized in the following diagram:
\begin{equation} \label{bigdiagram}
\xymatrix{
&  & \Spf R^{\Box,\tau}_{\barcM} \ar[dl]_{f.s.} \ar[dr]^{f.s.} & & \\
\Spf R^{\tau}_{\barcM, \infty} & \Spf R^{\tau}_{\barcM}  \ar@{_{(}->}[l]  &   & \Spf R^{\Box}_{\barcM} \ar[r]^{\sim}  & \Spf R^{\Box}_{\rhobar}
}
\end{equation}

Morphisms labelled f.s. are easily seen to be formally smooth.  Since the existence of monodromy is a closed condition, the leftmost arrow is a closed immersion. We will show that the rightmost arrow is an isomorphism in Theorem \ref{isomorphism moduli spaces} below. 

We now deduce the two important consequences of our work in the previous sections:
\begin{lem}
\label{closed}
The rings $R^{\tau}_{\barcM}\otimes_{\Oe} \F$, $R^{\tau}_{\barcM,\infty}\otimes_{\Oe}\F$ are formally smooth of dimension 6 and $7$ respectively.
Moreover we have a closed immersion 
$$
\Spf(R^{\tau}_{\barcM})\times \F \into \Spf(R^{\tau}_{\barcM,\infty})\times \F
$$
which realizes $\Spf(R^{\tau}_{\cM})\times \F$ as a linear subspace in $\Spf(R^{\tau}_{\cM,\infty})\times \F$
\end{lem}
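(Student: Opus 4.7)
The strategy is to combine the explicit parametrization of ordinary quasi-Breuil modules with gauge basis from Proposition \ref{diagonalize mod p} with the monodromy criterion of Proposition \ref{essential image}, both extended to coefficients in an arbitrary artinian local $\F$-algebra $\overline{R}$.

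I would first verify that the proof of Proposition \ref{diagonalize mod p} goes through \emph{verbatim} for any artinian local $\F$-algebra $\overline{R}$ (this is the mod $p$ analog of the remark following Theorem \ref{phifilr}). The iterative change-of-basis argument relies only on Nakayama-type manipulations plus the nilpotence of $u^{e}$ modulo $u^{ep}$ in $\barS_{\overline{R}}$, both insensitive to the coefficients. Consequently, every pair $(\cM,\un{\widehat{e}})\in \RBrModddN[2](\overline{R})$ of type $\tau$ lifting $(\barcM,\un{e})$ is uniquely described by a tuple $(v_{1,0},v_{2,0},v'_{2,0},v_{2,1})\in\overline{R}^4$ together with Frobenius scalars $(\alpha_0,\alpha_1,\alpha_2)\in(\overline{R}^\times)^3$, each reducing to the corresponding datum for $(\barcM,\un{e})$. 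Conversely, any such tuple defines an object of $\RBrModddN[2](\overline{R})$ with gauge basis. This identifies $R^{\tau}_{\barcM,\infty}\otimes_{\Oe}\F$ with a formal power series ring in $4+3=7$ variables over $\F$, proving formal smoothness in dimension $7$.

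Next, Proposition \ref{essential image} is purely linear-algebraic and likewise extends \emph{verbatim} to $\overline{R}$-coefficients: a quasi-Breuil module with gauge basis in ordinary form admits a (necessarily unique) compatible monodromy operator $N$ if and only if $v_{2,0}=0$, in which case $N$ is given by the explicit formulas of that proposition. Since $\overline{R}$-valued points of $R^{\tau}_{\barcM}$ are exactly the subset of $\overline{R}$-valued points of $R^{\tau}_{\barcM,\infty}$ that admit such an $N$, the forgetful morphism
$$\Spf R^{\tau}_{\barcM}\times\F\;\hookrightarrow\;\Spf R^{\tau}_{\barcM,\infty}\times\F$$
is a closed immersion cut out by the single linear equation $v_{2,0}=0$. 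This realizes $\Spf R^{\tau}_{\barcM}\times\F$ as a linear subspace of codimension one, whence $R^{\tau}_{\barcM}\otimes_{\Oe}\F$ is formally smooth of dimension $6$.

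The main technical obstacle is the careful extension of Propositions \ref{diagonalize mod p} and \ref{essential image} to arbitrary artinian $\F$-algebra coefficients; once this bookkeeping is in hand the result follows essentially by counting parameters, with no new genericity input beyond the strong genericity assumption already placed on $\rhobar$.
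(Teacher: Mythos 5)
Your proof is correct and follows essentially the same route as the paper: both establish that $R^{\tau}_{\barcM,\infty}\otimes_{\Oe}\F$ is a power series ring in the seven parameters $(v_{1,0},v_{2,0},v'_{2,0},v_{2,1},\alpha_0,\alpha_1,\alpha_2)$ via the mod-$p$ diagonalization (Proposition \ref{diagonalize mod p} with $\overline{R}$-coefficients), and then carve out $R^{\tau}_{\barcM}\otimes_{\Oe}\F$ by the single linear equation $v_{2,0}=0$ furnished by Proposition \ref{essential image}. The one thing you pass over lightly is the uniqueness direction of your parametrization --- namely that for a fixed gauge basis $\un{\widehat{e}}$ the parameters $(v_{i,j},v'_{2,0},\alpha_i)$ are uniquely determined, which requires the argument of Lemma \ref{lemchange} (a change of $\un{f}$ compatible with the diagonal Frobenius and the triangular filtration matrix is forced to be trivial once $\un{e}$ is frozen); the paper cites that lemma explicitly at this step. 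The paper also invokes Theorem \ref{phifilr} to additionally establish formal smoothness of $R^{\tau}_{\barcM,\infty}$ over $\Oe$ (a stronger conclusion used later in Corollary \ref{formal smoothness}), but for the mod-$p$ statement of Lemma \ref{closed} your purely mod-$p$ argument suffices.
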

\begin{proof}
By Theorem \ref{phifilr}, Proposition \ref{diagonalize mod p} and Lemma \ref{lemchange} we see that $\Spf(R^{\tau}_{\cM,\infty})$ is formally smooth, of relative dimension $7$ over $\cO_E$ with a universal family of ``ordinary'' quasi-Breuil modules.  

By Proposition \ref{essential image}, it follows immediately that $R^{\tau}_{\barc{M}}\otimes_{\Oe} \F$ is formally smooth, of relative dimension $6$ and the closed immersion in the statement of the Lemma is defined by $X_7=0$, if $X_1,\dots, X_7$ is a system of local coordinates on $\Spf(R^{\tau}_{\barcM,\infty})\times \F$.
\end{proof}

\begin{theo}
\label{isomorphism moduli spaces}
The natural map $\Spf R^{\Box}_{\mathcal{M}} \ra \Spf R^{\Box}_{\rhobar}$ is an isomorphism.
\end{theo}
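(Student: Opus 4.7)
The plan is to show the morphism is a bijection on $A$-valued points for every Artinian local $\Oe$-algebra $A$: both sides represent (pro-)deformation functors by complete Noetherian local $\Oe$-algebras, so such a bijection induces an isomorphism of representing rings. Unwinding the definitions, the morphism sends a pair $(\cM_A,\rho_A) \in \Spf R^{\Box}_{\barcM}(A)$, where $\cM_A$ is a strongly divisible $S_A$-module lifting $\barcM$ and $\rho_A \cong \Tst^2(\cM_A)$ is a framed lift of $\rhobar$, to $\rho_A$ itself.

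The first step is to verify that the map actually lands in $\Spf R^{\Box}_{\rhobar}(A)$, i.e.\ that $\rho_A$ is potentially crystalline; a priori Theorem \ref{liu} only guarantees potential semi-stability. Equivalently, one must show that the monodromy $N$ on $\Dst(\rho_A|_{G_K}) \cong \cM_A \otimes_{S_A} A$ (base change by $u\mapsto 0$) vanishes. On the one hand, $N$ commutes with the tame descent action of $\Delta$; under strong genericity (\ref{strong genericity}) the three characters $\omega^{a_0},\omega^{a_1},\omega^{a_2}$ are pairwise distinct, so $\Dst$ splits over $A$ as a direct sum of three rank-one $\Delta$-eigencomponents, and thus $N$ is diagonal in the basis $(e_0,e_1,e_2)$. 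On the other hand, $N$ is nilpotent by the general theory of potentially semi-stable representations. A simultaneously diagonal and nilpotent operator must vanish, proving $\rho_A|_{G_K}$ is crystalline.

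Next I would construct the inverse. Given a framed potentially crystalline deformation $\rho_A$ of $\rhobar$ of type $\tau$ and Hodge type $(0,1,2)$, Theorem \ref{liu} (extended $A$-linearly in the standard manner) yields a strongly divisible $S_A$-module $\cM_A$ of type $\tau$ together with an isomorphism $\Tst^2(\cM_A) \cong \rho_A$, and $\cM_A$ is unique up to isomorphism. Its reduction $\cM_A \otimes_A \F \in \FBrModdd[2]$ satisfies $\Tst^2(\cM_A \otimes \F) \cong \rhobar$; Proposition \ref{unicity1} then identifies it with $\barcM$, exhibiting $\cM_A$ as a lift of $\barcM$. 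The resulting pair $(\cM_A,\rho_A)$ is the required $A$-point, and its uniqueness in the fiber over $\rho_A$ follows from the full faithfulness of $\Tst^2$.

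The main obstacle is the first step — upgrading potential semi-stability to potential crystallinity. The argument hinges entirely on the strong genericity assumption (\ref{strong genericity}), which forces the descent characters to be pairwise distinct, pins down $N$ on $\Dst$ as diagonal, and thus — combined with nilpotency — as zero.
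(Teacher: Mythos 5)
Your first step --- verifying that $\Tst^2(\cM_A)$ is potentially \emph{crystalline} rather than merely potentially semi-stable --- is correct and cleanly argued: under (\ref{strong genericity}) the characters $\omega^{a_0},\omega^{a_1},\omega^{a_2}$ are pairwise distinct, so the $\Delta$-equivariance of $N$ on $\Dst$ forces $N$ to preserve the three rank-one eigenlines, and nilpotency then kills it. The paper glosses over this point, so your argument actually makes explicit something that is left implicit there.

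The serious gap is in your second step, where you invoke ``Theorem \ref{liu} (extended $A$-linearly in the standard manner).'' Theorem \ref{liu} is stated and proved only for $\cO_E$-coefficients --- equivalently for $G_{\Qp}$-stable $\cO_E$-lattices in honest $p$-adic representations. There is no ``standard'' extension to arbitrary Artinian local $\cO_E$-algebras $A$: an $A$-valued deformation $\rho_A$ is not a lattice in a characteristic-zero representation, and the existence (let alone functoriality and uniqueness) of a strongly divisible $S_A$-module with $\Tst^2(\cM_A)\cong\rho_A$ for torsion $A$ is exactly the kind of statement one would need to prove, not cite. In fact, establishing such a result would essentially already be the content of the theorem; trying to use it as an input is circular. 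This is precisely why the paper does \emph{not} construct an inverse on $A$-points. Its route is to (a) use Theorem \ref{liu} only where it is available, namely with $\cO_E$-coefficients (via $\overline{\Q}_p$-points), to get an isomorphism of generic fibers $\Spf(R^{\Box}_{\barcM})\times\Qp\stackrel{\sim}{\to}\Spf(R^{\Box}_{\rhobar})\times\Qp$; and then (b) reduce the integral statement, using $\cO_E$-flatness of $R^{\Box}_{\rhobar}$, to the injectivity of the map on reduced tangent spaces, which is checked by passing to \'etale $\phz$-modules and Fontaine--Laffaille modules over $\F[\varepsilon]$ exactly as in Proposition \ref{unicity1}. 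You should replace your inverse-construction step with some version of this generic-fiber-plus-tangent-space argument; the surjectivity-on-$A$-points you are trying to prove directly is genuinely out of reach with the tools in the paper.
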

\begin{proof}
By Theorem \ref{liu}, Lemma \ref{lemchange} and the uniqueness of Proposition \ref{unicity1} we deduce that the forgetful functor induces an isomorphism on generic fibers
$$
\Spf(R^{\Box}_{\barcM})\times\Qp\stackrel{\sim}{\longrightarrow}\Spf(R^{\Box}_{\rhobar})\times\Qp.
$$ 

As  $R^{\Box}_{\rhobar}$ is flat over $\Oe$, it suffices to show that 
$\Spf(R^{\Box}_{\barcM}){\longrightarrow}\Spf(R^{\Box}_{\rhobar})$ is a closed immersion, i.e. that the induced map 
\begin{equation}
\label{reduced tangent}
\Hom(R^{\Box}_{\barcM},\F[\varepsilon])\rightarrow \Hom(R^{\Box}_{\rhobar},\F[\varepsilon])
\end{equation}
between the reduced tangent spaces is injective.

This can be done by Galois descent via Proposition \ref{proposition comparison}, following the technique of the proof of Proposition \ref{unicity1}. 
More precisely, let $(\barc{N},\rho)$ be a point in $\Hom(R^{\Box}_{\barcM},\F[\varepsilon])$. By Lemma \ref{lemchange} $\barc{N}$ is endowed with a gauge basis $\underline{e}=(e_0,e_1,e_2)$; in particular we have
\begin{equation}
\mathrm{Mat}_{\underline{e}}([f_0,\,f_1,\,f_2])=\maqn{ccc}{{1}&{0}&{0}\\u^{\left[ a_1-a_0\right]}x&u^e&0\\u^{e}u^{\left[ a_2-a_0\right]}y&u^eu^{\left[ a_2-a_1\right]}z&u^{2e}},\qquad\mathrm{Mat}_{\underline{e},\underline{f}}(\varphi_2)=\mathrm{Diag}(\alpha_0,\alpha_1,\alpha_2)
\end{equation}
where $\underline{f}=(f_0,f_1,f_2)$ is a system of generators for $\Fil^2\barc{N}$ and $x,y,x\in \F[\varepsilon]$, $\alpha_i\in \F[\varepsilon]^{\times}$.

As in the proof of Proposition \ref{unicity1}, we deduce that
$$
\rho\cong \Tcris^*(N)\otimes \omega^{a_0}
$$
where $N$ is a Fontaine-Laffaille module over $\F[\varepsilon]$, in Hodge-Tate weights $(0,a_1-a_0+1,a_2-a_0+2)$ and Frobenii given by:
$$
\mathrm{Mat}(\phi_{\bullet})=\maqn{ccc}{1&x&y\\
0&1&z\\0&0&1}\mathrm{Diag}(\alpha_0^{-1},\alpha_1^{-1},\alpha_2^{-1}).
$$

Hence, the image of the reduced tangent map (\ref{reduced tangent}) consists of Fontaine-Laffaille tangent vectors, and the map is therefore injective.
\end{proof}

The following elementary result is the reason we only needed to compute the monodromy on the special fiber:
\begin{prop}
\label{formalsmoothness}
Let $R$ be a complete local Noetherian $\cO_E$-algebra with residue field $\F$.  If $R\otimes_{\cO_E} \F$ is formally smooth of dimension $d$ and $R$ is flat over $\Oe$, then $R$ is formally smooth over $\Oe$ of dimension $d$.
\end{prop}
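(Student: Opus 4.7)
The plan is to construct an explicit local $\cO_E$-algebra isomorphism $\phi: A \xrightarrow{\sim} R$, where $A \defeq \cO_E[[x_1,\ldots,x_d]]$. By the formal smoothness hypothesis, there is an isomorphism $R \otimes_{\cO_E} \F \cong \F[[y_1,\ldots,y_d]]$; I would lift the coordinates $y_i$ to elements $\widetilde{y}_i \in \mathfrak{m}_R$ and define $\phi$ by $x_i \mapsto \widetilde{y}_i$ (well-posed by $\mathfrak{m}_R$-adic completeness of $R$). By construction $\phi \otimes_{\cO_E} \F$ is the given isomorphism.

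For surjectivity, I would first check that $(\varpi_E, \widetilde{y}_1, \ldots, \widetilde{y}_d) = \mathfrak{m}_R$: this holds modulo $\varpi_E R$ by the chosen isomorphism, and then follows from Nakayama applied to $\mathfrak{m}_R/(\widetilde{y}_1, \ldots, \widetilde{y}_d)$. A standard successive-approximation argument inside the $\mathfrak{m}_R$-adically complete ring $R$ (using $\mathfrak{m}_A$-adic completeness of $A$ to assemble the approximations) then expresses every $r \in R$ as a convergent series in the $\widetilde{y}_i$ with $\cO_E$-coefficients, i.e.\ as an element of $\phi(A)$.

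The injectivity step is where $\cO_E$-flatness of $R$ enters decisively. Set $I \defeq \ker\phi$. Tensoring the exact sequence $0 \to I \to A \to R \to 0$ with $\F$ over $\cO_E$ and invoking $\mathrm{Tor}_1^{\cO_E}(R,\F) = 0$, I obtain a short exact sequence
$$
0 \to I/\varpi_E I \to A/\varpi_E A \to R/\varpi_E R \to 0.
$$
The right-hand map is the given isomorphism, so $I/\varpi_E I = 0$, i.e.\ $\varpi_E I = I$. Since $A$ is Noetherian, $I$ is a finitely generated ideal, and ordinary Nakayama (using $\varpi_E \in \mathfrak{m}_A$) forces $I = 0$. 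Combined with surjectivity, this gives $R \cong \cO_E[[x_1,\ldots,x_d]]$, which is formally smooth over $\cO_E$ of relative dimension $d$.

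The only genuinely delicate point is the topological surjectivity; this is routine once one has a continuous local homomorphism of complete Noetherian local rings which is surjective mod $\varpi_E$. The substantive content of the proposition is the interplay between $\cO_E$-flatness and reduction modulo $\varpi_E$ encoded in the injectivity step.
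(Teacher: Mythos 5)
Your proof is correct and follows essentially the same route as the paper's: lift coordinates from the special fiber, get a surjection $\cO_E[[x_1,\ldots,x_d]]\to R$ by Nakayama, and deduce injectivity from $\cO_E$-flatness. You merely make explicit the flatness step (via $\mathrm{Tor}_1^{\cO_E}(R,\F)=0$, then $I=\varpi_E I$ and Nakayama) that the paper compresses into a single sentence.
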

\begin{proof}
As the special fiber $R\otimes_{\Oe} \F$ is formally smooth over $\F$, we have an isomorphism $R\otimes_{\Oe} \F = \F[\![x_1, \ldots, x_d]\!]$. Choose any lifts $\widetilde{x}_i \in m_R$ where $m_R$ is the maximal ideal of $R$.  By Nakayama, we have a surjective map 
\begin{eqnarray*}
\Oe[\![X_1,\dots,X_n]\!] \ra R
\end{eqnarray*}
which is an isomorphism on the special fiber.  As $R$ is $\Oe$-flat, the above map is in fact an isomorphism.  
\end{proof}

We are now ready to prove our main result:
\begin{theo} \label{main} Let $R^{\Box, (0,1,2), \mathrm{cris}, \tau}_{\rhobar}$ be the framed potentially crystalline deformation ring for $\rhobar$, with Galois type $\tau$ and $p$-adic Hodge type $(0,1,2)$.  Assume that $\Spf R^{\Box, (0,1,2), \mathrm{cris}, \tau}_{\rhobar}$ is non-empty.  Then  $R^{\Box, (0,1,2), \mathrm{cris}, \tau}_{\rhobar}$ is formally smooth over $\Oe$ of relative dimension 12.  
\end{theo}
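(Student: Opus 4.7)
The plan is to descend formal smoothness and dimension through the diagram \eqref{bigdiagram}. By Theorem \ref{isomorphism moduli spaces} we have $R^{\Box}_{\barcM} \cong R^{\Box}_{\rhobar}$, so it suffices to prove that $R^{\Box}_{\barcM}$ is formally smooth of relative dimension $12$ over $\Oe$. Since $R^{\Box}_{\rhobar}$ is $\Oe$-flat (it is a potentially crystalline framed deformation ring, non-empty by hypothesis), Proposition \ref{formalsmoothness} reduces us to proving that $R^{\Box}_{\barcM}\otimes_{\Oe}\F$ is formally smooth of dimension $12$.

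First I would record the smoothness of $R^{\tau}_{\barcM}\otimes\F$: by Lemma \ref{closed}, it is formally smooth of dimension $6$, realized as the linear subspace $\{v_{2,0}=0\}$ inside the formally smooth $7$-dimensional $R^{\tau}_{\barcM,\infty}\otimes\F$ (the key input is Proposition \ref{essential image}, which says that on the ``ordinary'' locus the existence of a monodromy operator is cut out by a single linear equation, thanks to the strong genericity of the descent datum).

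Next I would propagate smoothness through the two formally smooth arrows out of $\Spf R^{\Box,\tau}_{\barcM}$ in \eqref{bigdiagram}. The arrow $\Spf R^{\Box,\tau}_{\barcM}\to\Spf R^{\tau}_{\barcM}$ represents the choice of a framing on the rank-$3$ Galois representation $\Tst^2(\cM)$, hence is formally smooth of relative dimension $n^2=9$; consequently $R^{\Box,\tau}_{\barcM}\otimes\F$ is formally smooth of dimension $15$. The arrow $\Spf R^{\Box,\tau}_{\barcM}\to\Spf R^{\Box}_{\barcM}$ represents the choice of a gauge basis on $\cM$, which by Lemma \ref{lemchange} is unique up to the action of $\bT_3$, so this arrow is formally smooth of relative dimension $3$. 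Since $R^{\Box,\tau}_{\barcM}\otimes\F$ is a regular local ring and it is isomorphic to a power series ring in $3$ variables over $R^{\Box}_{\barcM}\otimes\F$, the latter is regular of dimension $12$, hence formally smooth.

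The main technical obstacle has already been settled in the earlier sections: the $p$-adic convergence argument of Theorem \ref{phifilr} (giving gauge bases), the monodromy computation of Proposition \ref{essential image} (giving the crucial linear equation cutting $7$ down to $6$), and the uniqueness of Proposition \ref{unicity1} feeding into Theorem \ref{isomorphism moduli spaces}. Given these, the only remaining work is the routine dimension-tracking above and an appeal to Proposition \ref{formalsmoothness}; no further deformation-theoretic input is needed.
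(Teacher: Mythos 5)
Your proposal is correct and follows the same overall architecture as the paper (descending through diagram \eqref{bigdiagram}, using Theorem \ref{isomorphism moduli spaces}, Lemma \ref{closed}, and Proposition \ref{formalsmoothness}). The one genuine difference is in how the relative dimension $12$ is obtained. The paper first applies Proposition \ref{formalsmoothness} at the level of $R^{\tau}_{\barcM}$, propagates formal smoothness through the diagram to $R^{\Box}_{\rhobar}$, and then obtains the dimension by citing \cite[Theorem 3.3.8]{PST} (the standard formula $n^2 + [K:\Qp]\cdot n(n-1)/2 = 9 + 3$ for framed potentially crystalline deformation rings). You instead track dimensions explicitly through the two formally smooth arrows out of $\Spf R^{\Box,\tau}_{\barcM}$: relative dimension $9$ for the framing on the rank-$3$ representation and relative dimension $3$ for the gauge basis modulo $\bT_3$ (using Lemma \ref{lemchange}), giving $6 + 9 - 3 = 12$. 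This is more self-contained since it avoids the external citation, at the cost of needing the exact relative dimensions of the arrows in \eqref{bigdiagram}, which the paper asserts only to be formally smooth without quantifying. Both values ($9$ and $3$) are correct: the framing is a torsor under $\ker(\GL_3(R)\to\GL_3(\F))$, and by Lemma \ref{lemchange} (together with the faithfulness of $\Tst^*$, which kills automorphisms of $(\cM,\rho)$ that could act on gauge bases) the gauge basis is a torsor under $\ker(\bT_3(R)\to\bT_3(\F))$. Your regularity descent argument — if $B \cong A[[x_1,x_2,x_3]]$ is regular then $A$ is regular of dimension $\dim B - 3$ — is also sound. The reorganization of when Proposition \ref{formalsmoothness} is invoked (at $R^{\Box}_{\barcM}$ directly, rather than first at $R^{\tau}_{\barcM}$ and then propagating) is cosmetic.
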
  
\begin{proof}[Proof of Theorem \ref{main}]  
By Theorem \ref{isomorphism moduli spaces}, we have  $\Spf R^{\Box}_{\mathcal{M}}$ is non-empty and flat over $\Oe$.  Thus, the same is true for $\Spf R^{\Box, \tau}_{\barcM}$ and $\Spf R^{\tau}_{\barcM}$.   Combining Lemma \ref{closed} and Proposition \ref{formalsmoothness}, we deduce that  $R^{\tau}_{\barcM}$ is formally smooth over $\Oe$.  

From diagram (\ref{bigdiagram}), we deduce that $\Spf R^{\Box}_{\mathcal{M}}$ is formally smooth over $\Oe$. The relative dimension follows from \cite[Theorem 3.3.8]{PST}  given that $R^{\Box}_{\rhobar}$ is formally smooth over $\Oe$.  
\end{proof}  

\begin{rmk}  The condition that $\Spf R^{\Box, (0,1,2), \mathrm{cris}, \tau}_{\rhobar}$  is non-empty arises because we only compute the monodromy on the special fiber.  As a result, we do not exhibit any potentially crystalline lifts of $\rhobar$. 
This problem is addressed in \cite{LLLM}. In personal correspondence, Hui Gao informed us that he was able independently to prove that the ring $\Spf R^{\Box, (0,1,2), \mathrm{cris}, \tau}_{\rhobar}$ in Theorem \ref{main} is non-empty $($\cite{Gao}$)$.
\end{rmk} 

We can also deduce the following:
\begin{cor} \label{formal smoothness} The ring $R^{\tau}_{\barcM,\infty}$ is formally smooth over $\Oe$ of relative dimension $7$. Assume that $R^{\tau}_{\barcM} \otimes_{\Oe} E$ is non-empty, then $\Spf R^{\tau}_{\barcM}$ is formally smooth over $\Oe$ of dimension $6$ respectively.
\end{cor}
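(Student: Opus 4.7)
The plan is to treat the two claims separately. For $R^{\tau}_{\barcM,\infty}$ the approach is to recast Theorem \ref{phifilr} as a representability result; for $R^{\tau}_{\barcM}$ the approach is to combine the special-fiber formal smoothness from Lemma \ref{closed} with an $\Oe$-flatness statement obtained by chasing the formally smooth arrows in the diagram \eqref{bigdiagram}, then invoke Proposition \ref{formalsmoothness}.

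First I would apply Theorem \ref{phifilr} with $R$-coefficients (as indicated in the remark following that theorem), together with Lemma \ref{lemchange}, to identify the deformation functor pro-represented by $R^{\tau}_{\barcM,\infty}$ with the functor that sends a complete local noetherian $\Oe$-algebra $R$ with residue field $\F$ to the data of seven scalar parameters: three unit eigenvalues $\lambda_0, \lambda_1, \lambda_2 \in R^\times$ lifting the $\overline{\alpha}_i$, two elements $v_{1,0}, v_{2,1} \in R$, and an element $v_{2,0} \in R \oplus E(u)R$ accounting for two further scalars. Specifying that the gauge basis reduces to the chosen $\underline{e}$ on $\barcM$ rigidifies the universal object, so this gives a (non-canonical) isomorphism $R^{\tau}_{\barcM,\infty} \cong \Oe[[Y_1,\dots,Y_7]]$, establishing formal smoothness over $\Oe$ of relative dimension $7$.

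For the second claim, Lemma \ref{closed} already supplies formal smoothness of $R^{\tau}_{\barcM} \otimes_{\Oe} \F$ of dimension $6$, so by Proposition \ref{formalsmoothness} it suffices to show that $R^{\tau}_{\barcM}$ is $\Oe$-flat. The plan is to chase flatness through \eqref{bigdiagram}. Non-emptiness of $R^{\tau}_{\barcM} \otimes_{\Oe} E$ produces (after choosing a framing on the associated Galois representation) a point of $R^{\Box}_{\rhobar} \otimes_{\Oe} E$; Kisin's construction of the framed potentially crystalline deformation ring $R^{\Box,(0,1,2),\mathrm{cris},\tau}_{\rhobar}$ then ensures that it is $\Oe$-flat. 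By Theorem \ref{isomorphism moduli spaces} this flatness transfers to $R^{\Box}_{\barcM}$, and the formally smooth arrow $\Spf R^{\Box,\tau}_{\barcM} \to \Spf R^{\Box}_{\barcM}$ propagates it to $R^{\Box,\tau}_{\barcM}$. Finally the map $\Spf R^{\Box,\tau}_{\barcM} \to \Spf R^{\tau}_{\barcM}$ is a torsor for framings of $\Tst^2(\cM)$, hence formally smooth and essentially surjective, hence faithfully flat; $\Oe$-flatness therefore descends to $R^{\tau}_{\barcM}$. Proposition \ref{formalsmoothness} then yields the claim.

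The main obstacle is not mathematical depth but careful bookkeeping: one must check that the parameters provided by Theorem \ref{phifilr} really produce a universal object in families (i.e.~the functor sketched above is actually pro-represented by $R^{\tau}_{\barcM,\infty}$), and one must identify precisely which arrows in \eqref{bigdiagram} are faithfully flat (so that $\Oe$-flatness descends) versus merely formally smooth (so that it ascends). Both tasks should be routine given the results of \S 3 and \S 5.
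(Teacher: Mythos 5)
Your treatment of the second claim matches the paper's: the paper's proof reduces to the argument of Theorem \ref{main} once non-emptiness of $R^{\tau}_{\barcM}\otimes_{\Oe}E$ forces non-emptiness of $R^{\Box}_{\rhobar}\otimes_{\Oe}E$, and your flatness chase through \eqref{bigdiagram} fills in exactly what is left implicit there, including the observation that a formally smooth map of complete local noetherian $\Oe$-algebras with the same residue field is a power series ring extension, hence faithfully flat, so $\Oe$-flatness descends along $\Spf R^{\Box,\tau}_{\barcM}\to\Spf R^{\tau}_{\barcM}$.

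For the first claim, however, the direct identification $R^{\tau}_{\barcM,\infty}\cong\Oe[[Y_1,\dots,Y_7]]$ as you present it is not justified by the cited results. You invoke Theorem \ref{phifilr} ``with $R$-coefficients, as indicated in the remark,'' but that remark extends the theorem only to coefficient rings $R$ that are $p$-flat over $\Oe$: the $p$-adic convergence argument of \S 3 uses $p$-flatness in an essential way (for instance the identity $\bigcap_n(\cO_E+p^{n+1}\sR)=\cO_E$ fails over $p$-torsion rings). Pro-representability must be tested on \emph{all} Artinian quotients, including $\Oe/\varpi^n$ for $n\geq 2$, which are neither $p$-flat nor $\F$-algebras, so the cited results do not directly classify deformations over those test objects. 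The paper's proof is structured precisely to avoid this: it computes the special fiber via Lemma \ref{closed} (i.e.\ Proposition \ref{diagonalize mod p}, over $\F$-algebras), deduces $\Oe$-flatness of $R^{\tau}_{\barcM,\infty}$ from the seven-parameter family over the $p$-flat ring $\Oe[[Y_1,\dots,Y_7]]$ supplied by Theorem \ref{phifilr}, and then invokes Proposition \ref{formalsmoothness}, so that deformations over non-$p$-flat Artinian rings are never analyzed. Your closing ``careful bookkeeping'' caveat points at the right spot, but the fix is not to establish universality directly; it is to reroute through $\Oe$-flatness and Proposition \ref{formalsmoothness}, exactly as in your second paragraph.
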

\begin{proof}
It follows immediately from Proposition \ref{formalsmoothness} and Lemma \ref{closed} if the rings are $\cO_E$-flat.  The ring $R^{\tau}_{\barcM,\infty}$ is flat since we produced families of quasi-strongly divisible modules lifting $\barcM$ in  \S 3.  If $R^{\tau}_{\barcM} \otimes_{\Oe} E$ is non-empty then so is $R^{\Box}_{\rhobar} \otimes_{\Oe} E$ and so we can argue as in the proof of Theorem \ref{main}.
\end{proof}

\paragraph{\textbf{Acknowledgements}}
The authors wish to express their deepest gratitude to Florian Herzig, whose insight, suggestions and remarks have been crucial to this work. 

For several enlightening discussions while conceiving this work the second author would like to heartily thank Christophe Breuil, Ariane M\'ezard and Benjamin Schraen.

\bibliography{Biblio}
\bibliographystyle{amsalpha}
\end{document}